\newtheoremstyle{plainNoItalics}{}{}{\normalfont}{}{\bfseries}{.}{ }{}
\theoremstyle{plain}
\newtheorem{thm}{Theorem}[section]
\theoremstyle{plainNoItalics}
\newtheorem{rem}[thm]{Remark}
\newtheorem{exa}[thm]{Example}
\newcommand{\mD}{\mathcal{D}}
\newcommand{\mH}{\mathcal{H}}
\newcommand{\mI}{\mathcal{I}}
\newcommand{\mL}{\mathcal{L}}
\newcommand{\mO}{\mathcal{O}}
\newcommand{\Rmnum}[1]{\expandafter\@slowromancap\romannumeral #1@}
\begin{document}

%\baselineskip=1.8pc

%\vspace*{.10in}

%=============  title  =========================
\begin{center}
{\bf
A Kernel Based  Unconditionally Stable Scheme for Nonlinear Parabolic Partial Differential Equations 
}
\end{center}

\vspace{.2in}
\centerline{
Kaipeng Wang \footnote{
	School of Mathematical Sciences, University of Science and Technology of China, Hefei, Anhui, 230026, People's Republic of China. 
	{\tt kpwang@mail.ustc.edu.cn} 
},
Andrew Christlieb \footnote{
	Department of Computational Mathematics, Science and Engineering, Michigan State University, East Lansing, MI, 48824, United States.
	{\tt christli@msu.edu}.
	Research is supported in part by AFOSR grants FA9550-12-1-0343, FA9550-12-1-0455, and FA9550-15-1-0282, and NSF grant DMS-1418804.
},
Yan Jiang \footnote{
	School of Mathematical Sciences, University of Science and Technology of China, Hefei, Anhui, 230026, People's Republic of China.
	{\tt  jiangy@ustc.edu.cn}
	Research supported by NSFC grant 11901555.
}, and 
Mengping Zhang \footnote{
	School of Mathematical Sciences, University of Science and Technology of China, Hefei, Anhui, 230026, People's Republic of China.
	{\tt mpzhang@ustc.edu.cn}. 
	Research supported by NSFC grant 11871448.}
}

\bigskip

{\bf Abstract.}
In this paper, a class of high order numerical schemes is proposed to solve the nonlinear parabolic equations with variable coefficients. 
This method is based on our previous work \cite{christlieb2017kernel} for convection-diffusion equations, which relies on a special kernel-based formulation of the solutions and successive convolution. However, disadvantages appear when we extend the previous method to our equations, such as inefficient choice of parameters and unprovable stability for high-dimensional problems.  To overcome these difficulties, a new kernel-based formulation is designed to approach the spatial derivatives. It maintains the good properties of the original one, including the high order accuracy and unconditionally stable for one-dimensional problems, hence allowing much larger time step evolution compared with other explicit schemes.
In additional, without extra computational cost, the proposed scheme can enlarge the available interval of the special parameter in the formulation, leading to less errors and higher efficiency. 
Moreover, theoretical investigations indicate that it is unconditionally stable for multi-dimensional problems as well.
We present numerical tests for one- and two-dimensional scalar and system, demonstrating the designed high order accuracy and unconditionally stable property of the scheme.

\bigskip

{\bf Key Words:} Nonlinear parabolic equation, kernel based scheme, unconditionally stable, high order accuracy 
%\newpage

\section{Introduction}

In this work, we want to solve the nonlinear parabolic equations
\begin{align}
\label{eq:equation}
	\partial_t u(\mathbf{x},t) = \nabla \cdot \left( \textbf{A}(u, \textbf{x}, t) \nabla u\right) + \textbf{B}(u, \textbf{x}, t)^T \nabla u + C(u, \textbf{x}, t),
\end{align}
on the domain $\mathbf{x}\in \Omega\subset \mathbb{R}^n$ with initial and boundary conditions.
Here, $\textbf{A}(u, \textbf{x}, t)=\left( A_{ij}(u, \textbf{x}, t) \right) \in \mathbb{R}^{n\times n}$ and $\textbf{B}(u, \textbf{x}, t) = \left( B_{i}(u, \textbf{x}, t) \right) \in \mathbb{R}^{n}$.
In particular, the equation \eqref{eq:equation} is parabolic if there exists a constant $\theta>0$ such that 
	$$\allowdisplaybreaks \sum_{i,j=1}^{n} A_{ij}\xi_{i}\xi_{j} \geq \theta \sum_{i=1}^{n} \xi_{i}^{2}, \qquad 
	\forall \, (\xi_{1},\cdots,\xi_{n})\in \mathbb{R}^{n}. $$

For such a time-dependent partial differential equation (PDE) \eqref{eq:equation}, one common method is splitting the equation into a system with an auxiliary variable $\mathbf{w}\in \mathbb{R}^{n}$ at first, 
\begin{align}
\label{eq:system}
	\left\{\begin{array}{l}
	\partial_t u(\mathbf{x},t) = \nabla \cdot \left( \mathbf{A}(u, \textbf{x}, t) \mathbf{w} \right) + \mathbf{B}(u, \textbf{x}, t)^T \nabla u + C(u, \textbf{x}, t),\\
	\mathbf{w} = \nabla u. \\
\end{array}\right.
\end{align}
And then solve the two equations at the same time level. 
There is a large amount of numerical methods for this problem.
Most of these schemes discretize the spatial variables at first with finite volume / difference methods, finite element methods, or spectral methods, generating a large coupled system of ordinary differential equations (ODEs). And then apply an initial value ODE solver in time. This approach is commonly referred to as the Method of Lines (MOL) and interested readers are referred to \cite{schiesser2012numerical} for further discussions.
Classical methods for this time evolution include multi-step, multi-stage, or multi-derivative methods, as well as a combination of these approaches. For instance, the Runge-Kutta method and the Taylor series methods.
Note that efficiency is a main concern of these schemes. 
%resulting from the absolute stability requirement.
% as well as additional conditions that mimic physical properties of the solution, such as positivity or total variation stability. 
%the explicit strong-stability-preserving Runge-Kutta (SSP RK) schemes and SSP multi-step schemes \cite{gottlieb2001strong, shu2002survey, gottlieb2005high}, 
%which are stable, efficient and accurate for solving such convection or convection dominated problems. 
For example, the explicit methods solving \eqref{eq:system} does restrict the time step $\Delta t\varpropto \Delta x^2$ due to the stability requirement, where $\Delta x$ is the spatial mesh size. Using Implicit-Explicit (IMEX) or fully implicit time discretization techniques \cite{ruuth1993implicit, kennedy2003additive} can allow larger time step, but usually we need to solve a system of (nonlinear) equations for each step. The algorithm would be expensive when the system size becomes bigger. 
%However most explicit methods require $\Delta t\varpropto \Delta x^2$ for the schemes' stability. Meanwhile most implicit methods need to 
%solve a $N$-dimensional system of (nonlinear) equations for each step where $N$ is the number of discrete mesh points or cells so that the calculation is rather complicated when $N$ becomes bigger. 
%Some methods which is called semi-explicit and semi-implicit methods can relax the restriction on $\Delta t$ to $\Delta t\varpropto \Delta x$, but generally these methods' calculation is several times as the explicit methods' and still have some strict requirements for the value of $\frac{\Delta t}{\Delta x}$.
%{\color{red} 
	Besides the classical ones, other high order time discretization techniques were also developed, e.g., the spectral deferred correction (SDC) method \cite{dutt2000spectral, minion2003semi, huang2007arbitrary}, the exponential time differencing method \cite{cox2002exponential, kassam2005fourth}, the integration factor methods \cite{beylkin1998new, cox2002exponential, ju2015fast, maday1990operator, nie2006efficient}, and the hybrid methods of SDC and high order Runge-Kutta schemes \cite{christlieb2010integral}.
%}
%But other aspects need to be considered when using these scheme
%	these methods are not easy to solve the nonlinear problems like \eqref{eq:equation}, which contain the nonlinear coefficients in front of the highest order spatial derivative terms.
%deal with the matrix exponential for very
%large matrices?}

%Recent, 
Another framework named the Method of Lines Transpose (MOL$^T$) has been exploited in the literature for solving the linear time-dependent PDEs. 
In such a framework, the temporal variable is first discretized, resulting in a set of linear boundary value problems (BVPs) at  discrete time levels. Furthermore, each BVP can be inverted analytically in an integral formulation based on a kernel function and then the numerical solution is updated accordingly. As a notable advantage, the MOL$^T$ approach is able to use an implicit method but avoid solving linear systems at each time step, see \cite{causley2014method}. Moreover, a fast convolution algorithm is developed to ensure the computational complexity of the scheme is $\mathcal{O}(N)$ \cite{causley2013method, greengard1987fast, barnes1986hierarchical}, where $N$ is the number of discrete mesh points. Over the past several years, the MOL$^T$ methods have been developed for solving the heat equation \cite{causley2017method, causley2016method, kropinski2011fast, jia2008krylov},  Maxwell's equations \cite{cheng2017asymptotic}, the advection equation and Vlasov equation \cite{christlieb2016weno}, among others. This methodology can be generalized to solving some nonlinear problems, such as the Cahn-Hilliard equation \cite{causley2017method}.  However, it rarely applied to general nonlinear problems, mainly because efficient fast algorithms of inverting nonlinear BVPs are lacking and hence the advantage of the MOL$^T$ is compromised.

More recently, following the MOL$^T$ philosophy, authors found that the first order and second order derivatives can be represented as infinite series of the kernel based integral \cite{christlieb2017kernel}. Therefore, in numerical simulations, we can truncate the series and use the corresponding partial sum to approximate the spatial derivatives. 
This method was presented to solve the nonlinear degenerate parabolic equations in \cite{christlieb2017kernel},
\begin{align}
\label{eq:original}
	u_t+f(u)_x=g(u)_{xx},
\end{align}
which is a special case of \eqref{eq:equation} with $A(u,x,t)=g'(u)$. The major distinction between
the kernel based scheme and the MOL$^T$ works is that this scheme is still in the MOL framework with an the classic explicit strong-stability-preserving Runge-Kutta (SSP RK) scheme in time discretization \cite{gottlieb2001strong, shu2002survey, gottlieb2005high}, which is stable, efficient and accurate. Even though the scheme is explicit, it was proved to be unconditionally stable up to third order accuracy, with the help of the careful choice of a parameter $\beta$ in it. 
%Following the MOL$^T$ philosophy, we found that the first order and second order derivatives can be represented as infinite series of the kernel based integral. In numerical simulations, we truncated the series and only computed the corresponding partial sum to approximate the spatial derivatives. 
After that, the scheme has been extended to the Hamilton-Jacobi equations \cite{christlieb2019kernel}, and applied on the ideal magnetohydrodynamics equations \cite{cakir2019kernel}.  
%{\color{red}
	We have tried to employ the scheme to solve \eqref{eq:system} directly. Unfortunately, the numerical scheme is less efficient, because the available interval for $\beta$ is pretty small, which would result larger errors. Even worse, for the two-dimensional problems, the unconditional stability is absent. Details would be shown later. %}

%The robust WENO methodology and a nonlinear filter are further employed to capture sharp gradients of the solution without producing spurious oscillations, and at the same time, the high order accuracy is attained in smooth regions. 
%The scheme can be easily implemented in a nonuniform but orthogonal mesh and hence very effective in handling complex geometry.

%$MOL^T$ is stable for any value of $\Delta t$ (A-stable) , but it can't be directly used to solving \eqref{eq:equation} or \eqref{eq:system} and requires fairly good boundary conditions like periodic boundary. 

%In this work,
%in which numerical scheme on each stage is a linear combination of the forward Euler scheme.

%In this paper, we will propose a novel numerical scheme to discretize \eqref{eq:equation} or \eqref{eq:system}. The solution obtained by solving \eqref{eq:system} directly from MOL$^T$ is not uniform convergence to the exact solution. The major work is fixing the scheme to get a uniformly convergent solution. On the other hand, we want the scheme still A-stable and its calculation acceptable.

In this paper, we will propose a numerical scheme to discretize \eqref{eq:equation} or \eqref{eq:system}, with a novel kernel-based representation of the spatial derivatives. Again, the scheme is in the MOL framework, coupled with the classic SSP RK method in time discretization. We want the scheme can maintain the good properties of unconditional stability and high order accuracy. In additional, comparing with the the original method \cite{christlieb2017kernel}, the novel scheme can enlarge the available interval for $\beta$, enhancing greater efficiency. Moreover, the unconditionally stable property for high dimensional problems can be proved theoretically. 
For ease of use in the following parts, we list the formulation of the SSP RK scheme here, up to third order accuracy. To advance the solution of the ODE $u_t=\mathcal{H}[u]$ at time level $t^n$, denoted by $u^n$, to next time level $t^{n+1}=t^n+\Delta t$, the first order scheme is the forward Euler scheme,
\begin{align}
\label{eq:rk1}
u^{n+1}=u^{n}+\Delta t \mathcal{H}[u^{n}].
\end{align} 
The second order scheme is 
%\wkp{
\begin{equation}
\label{eq:rk2}
\begin{aligned}
& u^{(1)}=u^{n}+\Delta t \mathcal{H}[u^{n}], \\
& u^{n+1}=\frac{1}{2}u^{n}+\frac{1}{2} \left( u^{(1)}+\Delta t \mathcal{H}[u^{(1)}] \right).
\end{aligned}
\end{equation}
%}
And the third order scheme is 
\begin{equation}
\label{eq:rk3}
\begin{aligned}
& u^{(1)}=u^{n}+\Delta t \mathcal{H}[u^{n}], \\
& u^{(2)}=\frac{1}{4}u^{n}+\frac{3}{4} \left( u^{(1)}+\Delta t \mathcal{H}[u^{(1)}] \right), \\
& u^{n+1}=\frac{2}{3}u^{n}+\frac{1}{3} \left( u^{(2)}+\Delta t \mathcal{H}[u^{(2)}] \right).
\end{aligned}
\end{equation}

The rest of this paper is organized as follows. In Section 2, we will review how the original kernel based formula works on \eqref{eq:original}. Then we will fix the method and discuss the properties of the new one, including accuracy and stability, in Section 3. Section 4 will introduce the two-dimensional approach. After that, we will present some numerical tests in Section 5 to verify the performance of our scheme, and finally, draw conclusions in Section 6.
\section{Representation of Differential Operators}
In this section, we will review the representations of the first spatial derivative $\partial_{x}$ given in \cite{christlieb2017kernel}. % and the second spatial derivative $\partial_{xx}$.
Such representations serve as the key building block of the proposed schemes. Below, we will introduce operator $\mathcal{L}$ and the corresponding operator  $\mathcal{D}$ at first. Then, the differential operator $\partial_x$ can be represented by an infinite series of $\mathcal{D}$. 
%Such an idea was first used in \cite{causley2014method} for designing an A-stable scheme for linear wave equations. 
We will also investigate the approximation accuracy when the infinite series is truncated by a partial sum.

\subsection{The first derivative $\partial_{x}$}

In order to represent the first order derivative $\partial_{x}$, we start with two operators defined on a closed interval $x\in[a,b]$,  
\begin{align}
	\mathcal{L}_{L} := \mathcal{I}+\frac{1}{\alpha}\partial_{x} \quad \text{and} \quad 
	\mathcal{L}_{R} := \mathcal{I}-\frac{1}{\alpha}\partial_{x}, 
\end{align}
where $\mathcal{I}$ is the identity operator and $\alpha>0$ is a constant. Then, we can invert the operators analytically
\begin{align}
	\mathcal{L}_{L}^{-1} =I^{L}[v,\alpha](x) + A_{L}e^{-\alpha (x-a)},\qquad
	\mathcal{L}_{R}^{-1}=I^{R}[v,\alpha](x) + B_{R}e^{-\alpha (b-x)}
\end{align}
where, 
\begin{align}
\label{eq:I_LR}
	I^{L}[v,\alpha](x)=\alpha \int_a^x e^{-\alpha (x-y)}v(y)dy,\qquad
	I^{R}[v,\alpha](x)=\alpha \int_x^b e^{-\alpha (y-x)}v(y)dy
\end{align}
are the left/right biased integral, respectively.  
And $A_{L}$ and $B_{R}$ are constants determined by the boundary conditions. For instance, if $\mathcal{L}_{L}^{-1}$ and $\mathcal{L}_{R}^{-1}$ are periodic, that is $\mathcal{L}_{L}^{-1}(a)=\mathcal{L}_{L}^{-1}(b)$ and $\mathcal{L}_{R}^{-1}(a)=\mathcal{L}_{R}^{-1}(b)$, then $A_L=\frac{I^L[v,\alpha](b)}{1-\mu}$ and $B_R=\frac{I^R[v,\alpha](a)}{1-\mu}$ with $\mu=e^{-\alpha(b-a)}$.
Furthermore, the first order derivative $\partial_{x}$ can be represented as
%\begin{subequations*}
%\label{eq:fx}
%\wkp{
\begin{align*}
	& \partial_x
	= \alpha ( \mathcal{L}_L - \mathcal{I})
	= \alpha \mathcal{L}_L ( \mathcal{I} - \mathcal{L}_L^{-1} ) 
	= \alpha (\mathcal{I}-\mathcal{D}_L)^{-1} \mathcal{D}_L
	= \alpha \sum_{p=1}^{\infty} \mathcal{D}^{p}_{L},\\
	& \partial_x
	= -\alpha ( \mathcal{L}_R - \mathcal{I})
	= -\alpha \mathcal{L}_R ( \mathcal{I} - \mathcal{L}_R^{-1} ) 
	= -\alpha (\mathcal{I}-\mathcal{D}_R)^{-1} \mathcal{D}_R
	=-\alpha\sum\limits_{p=1}^{\infty}\mathcal{D}_R^p,
\end{align*}
%}
%\end{subequations*} 
with
\begin{align}
\label{eq:DLR}
	\mathcal{D}_L=\mathcal{I}-\mathcal{L}_L^{-1},
	\quad\text{and}\quad 
	\mathcal{D}_R=\mathcal{I}-\mathcal{L}_R^{-1}.
\end{align} 
 
%\begin{align}
%\label{eq:DL}
%	\partial_x
%	= \alpha ( \mathcal{L}_L - \mathcal{I})
%	= \alpha \mathcal{L}_L ( \mathcal{I} - \mathcal{L}_L^{-1} ) 
%	= \alpha (\mathcal{I}-\mathcal{D}_L)^{-1} \mathcal{D}_L
%	= \alpha \sum_{p=1}^{\infty} \mathcal{D}^{L}_{0}, 	
%	\quad\text{with}\quad \mathcal{D}_L=\mathcal{I}-\mathcal{L}_L^{-1},
%\end{align}
%or
%\begin{align}
%\label{eq:DR}
%	 \partial_x
%	 = -\alpha ( \mathcal{L}_R - \mathcal{I})
%	 = -\alpha \mathcal{L}_R ( \mathcal{I} - \mathcal{L}_R^{-1} ) 
%	 = -\alpha (\mathcal{I}-\mathcal{D}_R)^{-1} \mathcal{D}_R
%	 =-\alpha\sum\limits_{p=1}^{\infty}\mathcal{D}_R^p
%	 \quad\text{with}\quad \mathcal{D}_R=\mathcal{I}-\mathcal{L}_R^{-1}.
%\end{align} 
	
%We would like to remark that $\mathcal{D}_0[v,\alpha]=\frac{1}{2} \left( \mathcal{D}_L+\mathcal{D}_R \right) [v,\alpha]$ when boundary condition is employed.

%Consider the wind direction for equation $u_t+f(u)_x=0$, we can represent $f(u)_x$ using the infinite series of $\mathcal{D}_L$ when $f'(u)>0$, and use the infinite series of $\mathcal{D}_R$ when $f'(u)<0$.

%\begin{rem}
%It is not intuitively that $\mathcal{D}_0[u,\alpha]=\frac{1}{2}(\mathcal{D}_L+\mathcal{D}_R)[u,\alpha]$.
%\end{rem}

\noindent
In numerical simulations, we have to truncate the series and only compute the corresponding partial sums:
\begin{align}
\label{eq:partial_sum}
%\partial_{xx} v \approx -\alpha^2\sum\limits_{p=1}^{k}\mathcal{D}_0^p\\
	\partial_{x} v \approx \alpha \sum_{p=1}^{k} \mathcal{D}_L^p[v,\alpha],
	\quad\text{or}\quad
	\partial_{x} v \approx -\alpha \sum_{p=1}^{k}\mathcal{D}_R^p[v,\alpha].
\end{align} 
In particular, in the case of periodic boundary condition considered in this work, it is natural to require the boundary treatment 
%$$\mathcal{D}_0(a)=\mathcal{D}_0(b),\quad
$\mathcal{D}^p_L(a)=\mathcal{D}^p_L(b)$ and $\mathcal{D}^p_R(a)=\mathcal{D}^p_R(b)$ for $p\geq1$.
%if $v$ is periodic. 
Furthermore, \cite{christlieb2017kernel} showed the following theorem, which provided the expressions of the operators $\mathcal{D}_{*}$ and the error estimates of the corresponding $k$-th partial sums. The proof relies on integration by parts, and details can be found in \cite{christlieb2017kernel}.

\begin{thm}
\label{thm1}
Suppose $v(x)\in\mathit{C}^{k+1}([a,b])$ is a periodic function. If we employ the operator $\mathcal{D}_{*}$ with $\mathcal{D}_{*}(a)=\mathcal{D}_{*}(b)$, where $*$ can be L and R. 
\begin{enumerate}
	\item The operator $\mathcal{D}_{*}$ have the following expressions,
	\begin{subequations}
	\begin{align}
%\mathcal{D}_0[v,\alpha_0](x)=-\sum\limits_{p=1}^k\left(\frac{1}{\alpha_0}\right)^{2p}\partial_x^{2p}v(x)-\left(\frac{1}{\alpha_0}\right)^{2p+2}\mathcal{L}_0^{-1}[\partial_x^{2k+2}v,\alpha_0](x),\\
	\mathcal{D}_L[v,\alpha](x)=& -\sum\limits_{p=1}^k \left(-\frac{1}{\alpha}\right)^p\partial_x^p v(x) + \left(-\frac{1}{\alpha}\right)^{k+1} \mathcal{L}^{-1}_L[\partial_x^{k+1}v,\alpha](x),\\
	\mathcal{D}_R[v,\alpha](x)=& -\sum\limits_{p=1}^k \left(\frac{1}{\alpha}\right)^p\partial_x^p v(x) - \left(\frac{1}{\alpha}\right)^{k+1} \mathcal{L}^{-1}_R[\partial_x^{k+1}v,\alpha](x).
	\end{align}
	\end{subequations}

%	Suppose $v(x)\in\mathit{C}^{k+1}[a,b]$ and periodic. If we employ the operator $\mathcal{D}_{*}$ with $\mathcal{D}_{*}(a)=\mathcal{D}_{*}(b)$, where $*$ can be L and R. Then

	\item Error between the partial sum and the first derivative can be bounded, 
	\begin{subequations}
	\begin{align}
%	\| \partial_{xx}v(x) + \alpha_0^2\sum_{p=1}^{k}\mathcal{D}_{0}^{p}[v,\alpha_0](x) \|_{\infty} \leq C \left(\frac{1}{\alpha_0}\right)^{2k} \|\partial^{2(k+1)}_{x}v(x)\|_{\infty}\\
		 \| \partial_{x}v - \alpha \sum_{p=1}^{k}\mathcal{D}_{L}^{p}[v,\alpha] \|_{\infty} \leq C
		\left(\frac{1}{\alpha}\right)^{k}  \|\partial^{k+1}_{x}v\|_{\infty},\\
		 \| \partial_{x}v + \alpha \sum_{p=1}^{k}\mathcal{D}_{R}^{p}[v,\alpha] \|_{\infty} \leq C
		\left(\frac{1}{\alpha}\right)^{k} \|\partial^{k+1}_{x}v\|_{\infty},
	\end{align}
	\end{subequations}
	where, the constant $C$ only depends on $k$.
\end{enumerate}
\end{thm}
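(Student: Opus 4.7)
The plan is to prove Part 1 by iterated integration by parts on the convolution integral defining $\mathcal{L}_L^{-1}$, and then obtain Part 2 by recognizing the tail of the partial sum as a geometric-type series in an operator-theoretic sense. I would work out the $\mathcal{D}_L$ case in detail; the $\mathcal{D}_R$ formulas then follow by the reflection $x\mapsto a+b-x$, which swaps $I^L$ with $I^R$ and flips the relevant sign.

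For Part 1, I would start from $I^L[v,\alpha](x)=\alpha\int_a^x e^{-\alpha(x-y)}v(y)\,dy$. One integration by parts (with antiderivative $e^{-\alpha(x-y)}$ in $y$) yields the one-step recursion $I^L[v,\alpha](x)=v(x)-v(a)e^{-\alpha(x-a)}-\tfrac{1}{\alpha}I^L[\partial_x v,\alpha](x)$. Applying this $k+1$ times produces $I^L[v,\alpha](x)=\sum_{p=0}^{k}(-\tfrac{1}{\alpha})^p\partial_x^p v(x)-e^{-\alpha(x-a)}\sum_{p=0}^{k}(-\tfrac{1}{\alpha})^p\partial_x^p v(a)+(-\tfrac{1}{\alpha})^{k+1}I^L[\partial_x^{k+1}v,\alpha](x)$. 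Adding the boundary term $A_L e^{-\alpha(x-a)}$ and using the same recursion at $x=b$ together with periodicity of $v$ and its derivatives, the accumulated left-endpoint decay terms combine with $A_L$ to reproduce precisely the periodic correction constant for $\mathcal{L}_L^{-1}[\partial_x^{k+1}v,\alpha]$. This gives the closed form $\mathcal{L}_L^{-1}[v,\alpha](x)=\sum_{p=0}^{k}(-\tfrac{1}{\alpha})^p\partial_x^p v(x)+(-\tfrac{1}{\alpha})^{k+1}\mathcal{L}_L^{-1}[\partial_x^{k+1}v,\alpha](x)$, and subtracting from $v(x)$ yields the stated expression for $\mathcal{D}_L=\mathcal{I}-\mathcal{L}_L^{-1}$.

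For Part 2, I would use the exact telescoping identity $\alpha(\mathcal{I}-\mathcal{D}_L)^{-1}\mathcal{D}_L-\alpha\sum_{p=1}^{k}\mathcal{D}_L^p=\alpha(\mathcal{I}-\mathcal{D}_L)^{-1}\mathcal{D}_L^{k+1}=\alpha\mathcal{L}_L\mathcal{D}_L^{k+1}$, so that the truncation error equals $\alpha\mathcal{L}_L\mathcal{D}_L^{k+1}[v,\alpha]$. Specializing Part 1 to $k=0$ gives the useful identity $\mathcal{D}_L[v,\alpha]=\tfrac{1}{\alpha}\mathcal{L}_L^{-1}[\partial_x v,\alpha]$; combined with the commutativity $\partial_x\mathcal{L}_L^{-1}=\mathcal{L}_L^{-1}\partial_x$ on smooth periodic functions (if $u+\tfrac{1}{\alpha}u'=v$ with $u$ periodic, differentiating shows $u'=\mathcal{L}_L^{-1}[\partial_x v]$ by uniqueness), the error reduces to $\alpha^{-k}(\mathcal{L}_L^{-1})^k\partial_x^{k+1}v$. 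I would finish with the sup-norm contraction $\|\mathcal{L}_L^{-1}[u]\|_\infty\le\|u\|_\infty$, which follows from $\alpha\int_a^{x}e^{-\alpha(x-y)}\,dy=1-e^{-\alpha(x-a)}$ together with $|A_L|\le\|u\|_\infty$ (since $|I^L[u,\alpha](b)|\le(1-\mu)\|u\|_\infty$). Iterating this $k$ times produces the claimed bound with a constant of size $O(1)$ in $k$.

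The main obstacle is the bookkeeping of the decaying boundary terms $e^{-\alpha(x-a)}$ that accumulate at each step of the integration-by-parts recursion. These must be shown, by an induction on $k$, to coalesce with the periodic correction $A_L$ into exactly the correction needed for $\mathcal{L}_L^{-1}[\partial_x^{k+1}v,\alpha]$; this is the only point where the periodicity hypothesis and the particular choice of $A_L$ enter in a nontrivial way. Once that inductive identification is in hand, the remainder of the argument is clean operator algebra plus the straightforward $L^\infty$ contractivity of $\mathcal{L}_L^{-1}$.
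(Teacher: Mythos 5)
Your proposal is correct, and it takes essentially the same route the paper indicates for this theorem: integration by parts to obtain the one-step recursion, the identity $\mathcal{D}_L[v,\alpha]=\tfrac{1}{\alpha}\mathcal{L}_L^{-1}[\partial_x v,\alpha]$, commutation of $\mathcal{L}_L^{-1}$ with $\partial_x$ on periodic functions, and the sup-norm contraction of $\mathcal{L}_*^{-1}$ --- the same ingredients the paper itself deploys in the proof of Theorem 3.2, with your telescoping identity $\alpha\mathcal{L}_L\mathcal{D}_L^{k+1}$ being a clean way to package the tail.

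One point you glossed over: your Part 1 derivation does \emph{not} literally "yield the stated expression." Subtracting your (correct) closed form for $\mathcal{L}_L^{-1}[v,\alpha]$ from $v$ gives
\begin{equation*}
\mathcal{D}_L[v,\alpha](x) \;=\; -\sum_{p=1}^{k}\left(-\frac{1}{\alpha}\right)^{p}\partial_x^{p}v(x)\;-\;\left(-\frac{1}{\alpha}\right)^{k+1}\mathcal{L}_L^{-1}[\partial_x^{k+1}v,\alpha](x),
\end{equation*}
i.e.\ the remainder carries a minus sign, whereas the theorem as printed has a plus. Your sign is the correct one: testing $v=e^{i\eta x}$ with $z=\eta/\alpha$ gives $\mathcal{D}_L[v,\alpha]=\frac{iz}{1+iz}v$, which for $k=1$ agrees with $\frac{1}{\alpha}v'-\frac{1}{\alpha^2}\mathcal{L}_L^{-1}[v'',\alpha]$ but not with $\frac{1}{\alpha}v'+\frac{1}{\alpha^2}\mathcal{L}_L^{-1}[v'',\alpha]$; the printed $\mathcal{D}_R$ formula, by contrast, is consistent as written. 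So the printed expression (a) contains a sign typo, and you should state the corrected version rather than claim agreement with it. Nothing downstream is affected: Part 2 of your argument uses only the magnitude of the remainder, and your contraction bound $\|\mathcal{L}_L^{-1}[u]\|_\infty\le\|u\|_\infty$ (hence $C=1$) goes through as written.
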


Next, we look at the convergence performance of the partial sums \eqref{eq:partial_sum}. Here, we test with a special choice that $u=\sin x$, and plot the $L_\infty$ norm of errors in Figure \ref{figadd1}. It is obvious that the errors are always decreasing monotonely with $\alpha$, indicating the unform convergence of the partial sum \eqref{eq:partial_sum}.

\begin{figure}[htb]
	\centering
	\subfigure[$k=1$]{\includegraphics[width=.32\textwidth]{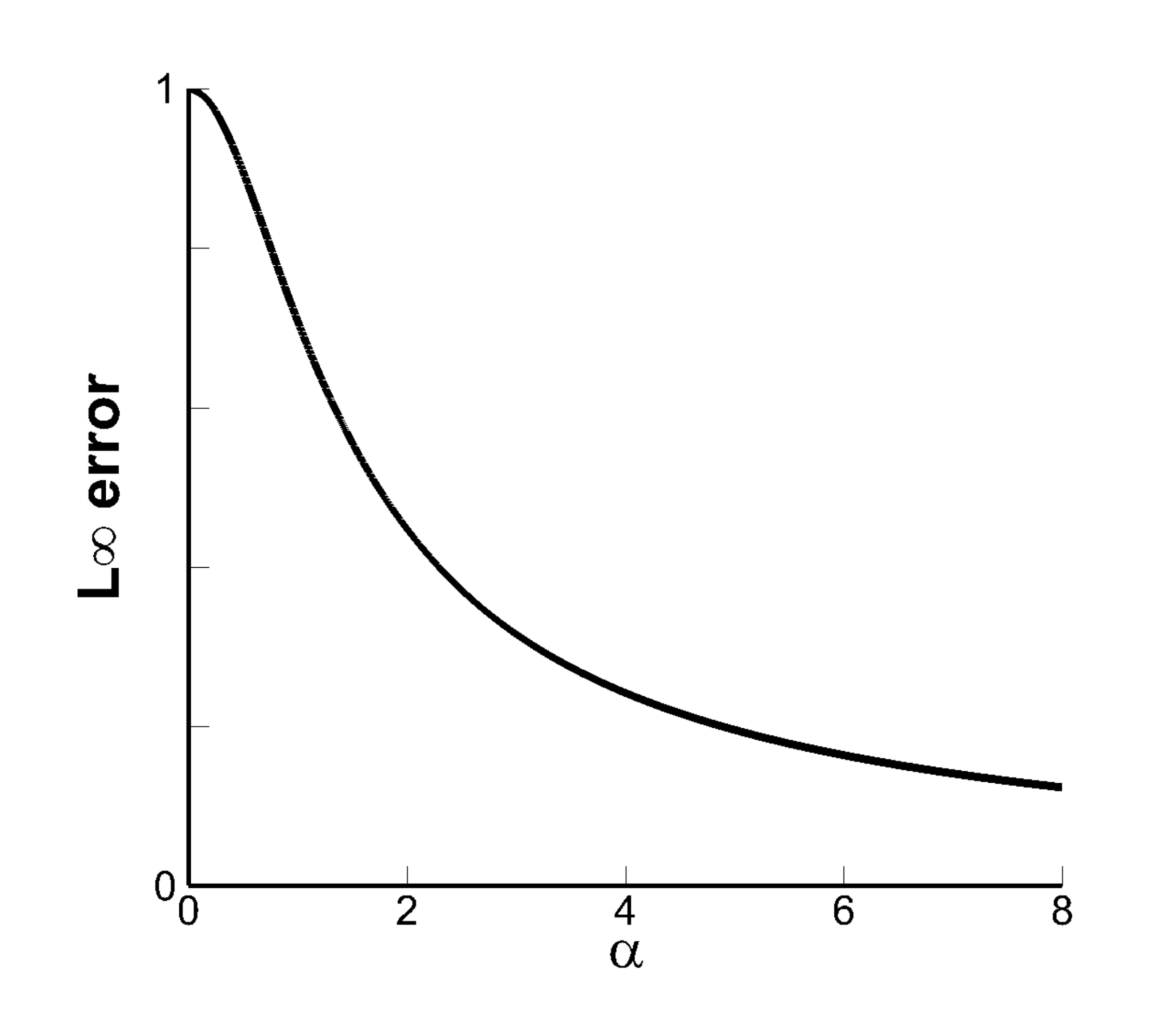}}
	\subfigure[$k=2$]{\includegraphics[width=.32\textwidth]{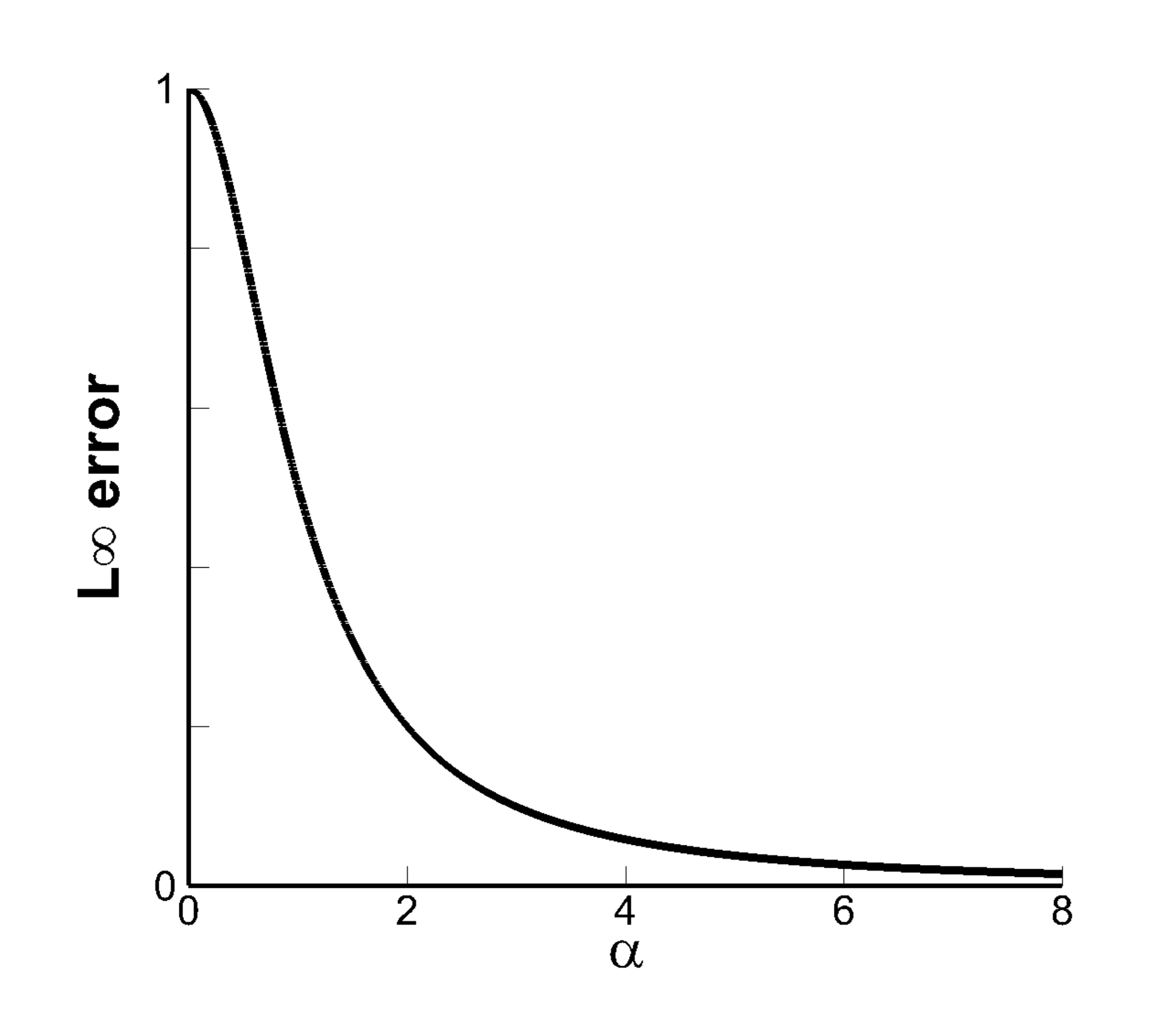}}
	\subfigure[$k=3$]{\includegraphics[width=.32\textwidth]{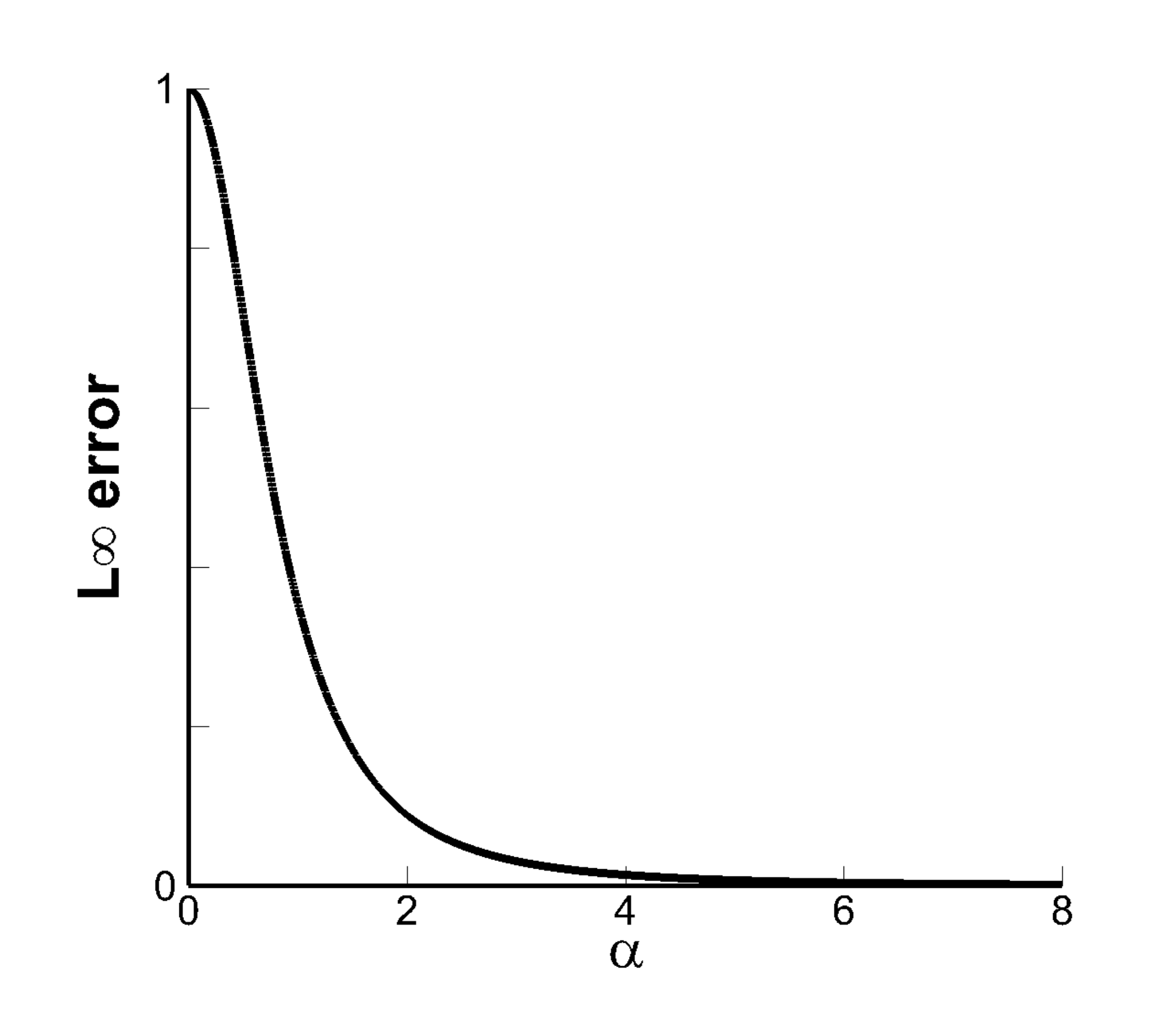}}\\
	\subfigure[$k=1$]{\includegraphics[width=.32\textwidth]{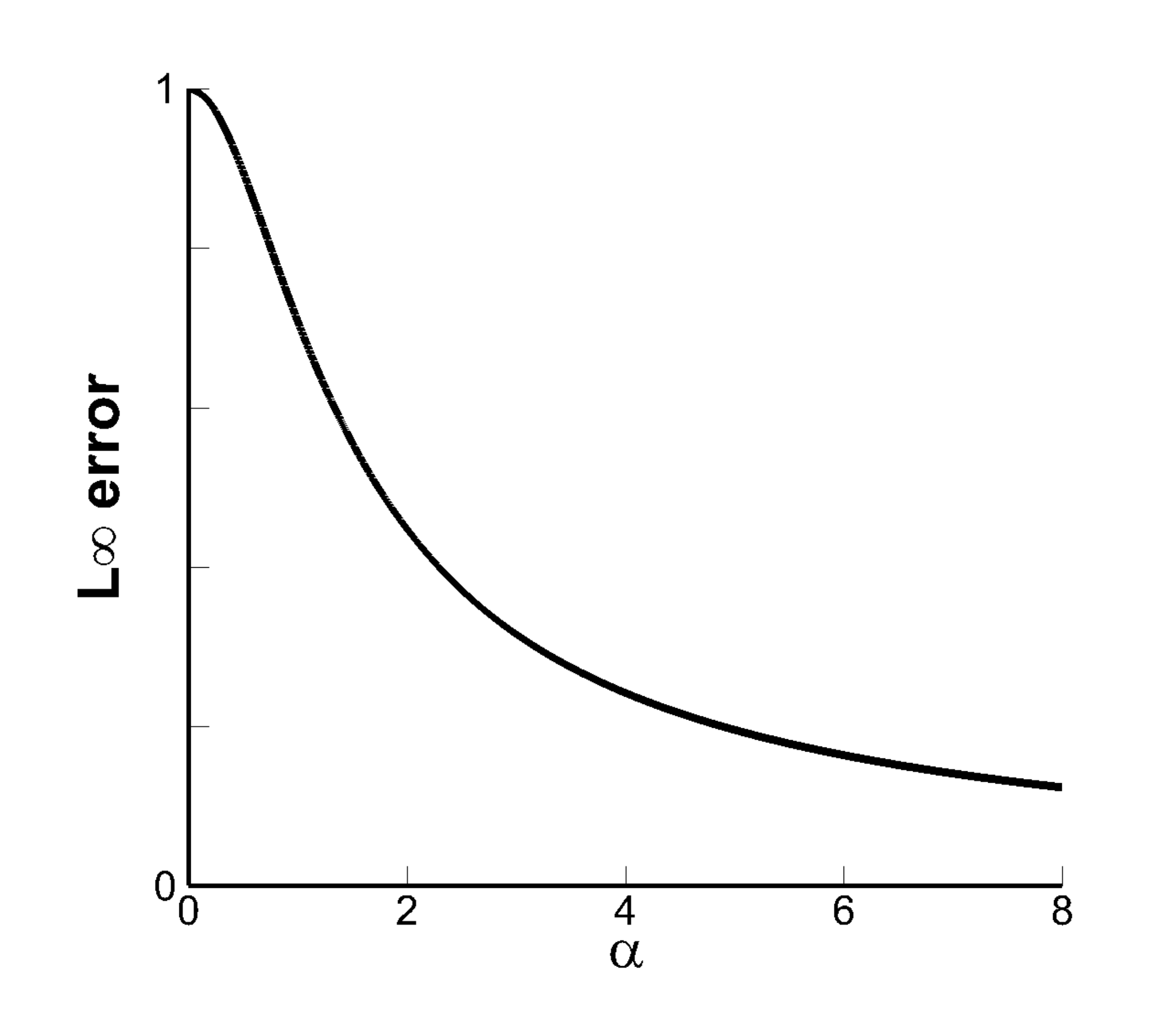}}
	\subfigure[$k=2$]{\includegraphics[width=.32\textwidth]{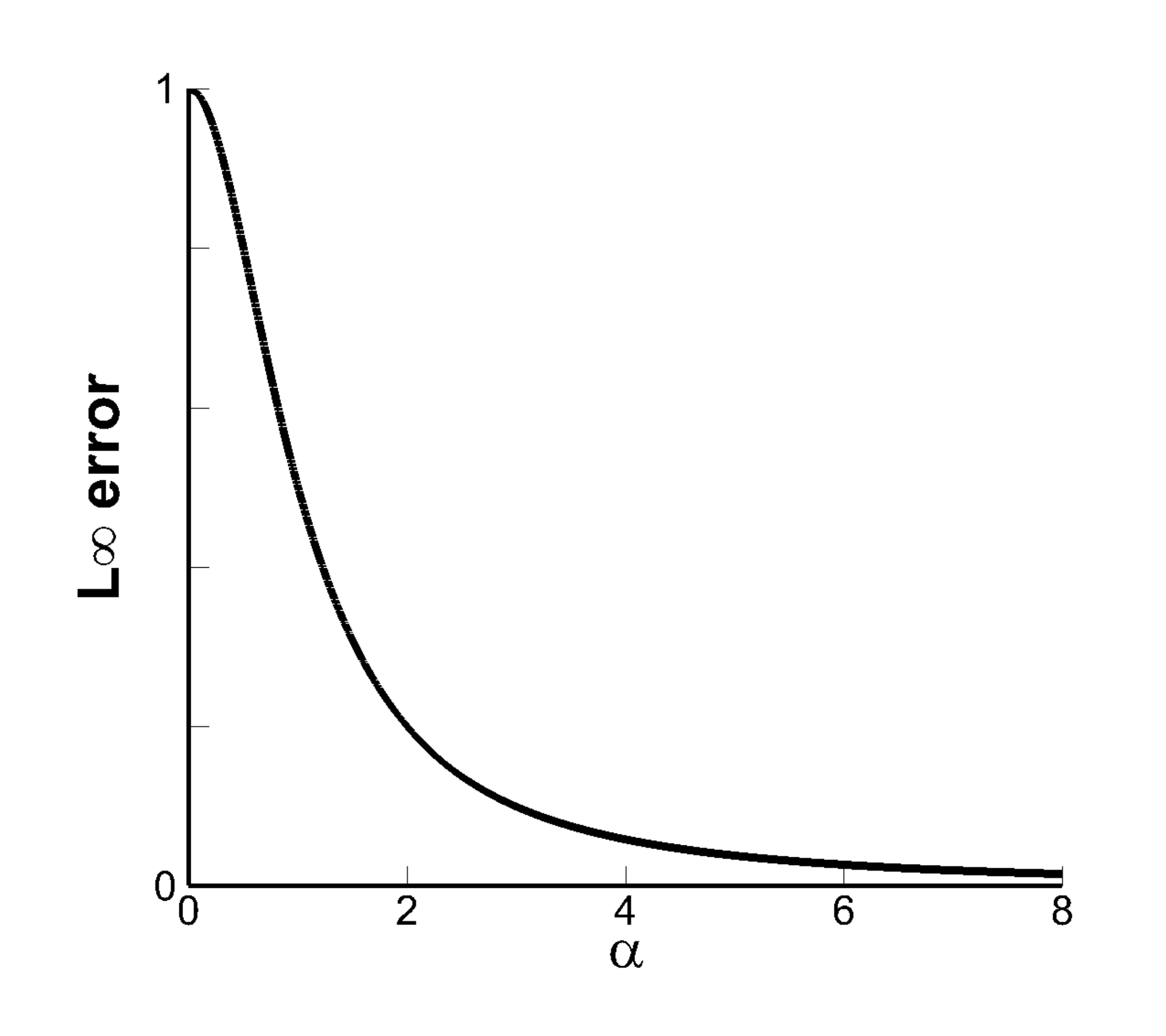}}
	\subfigure[$k=3$]{\includegraphics[width=.32\textwidth]{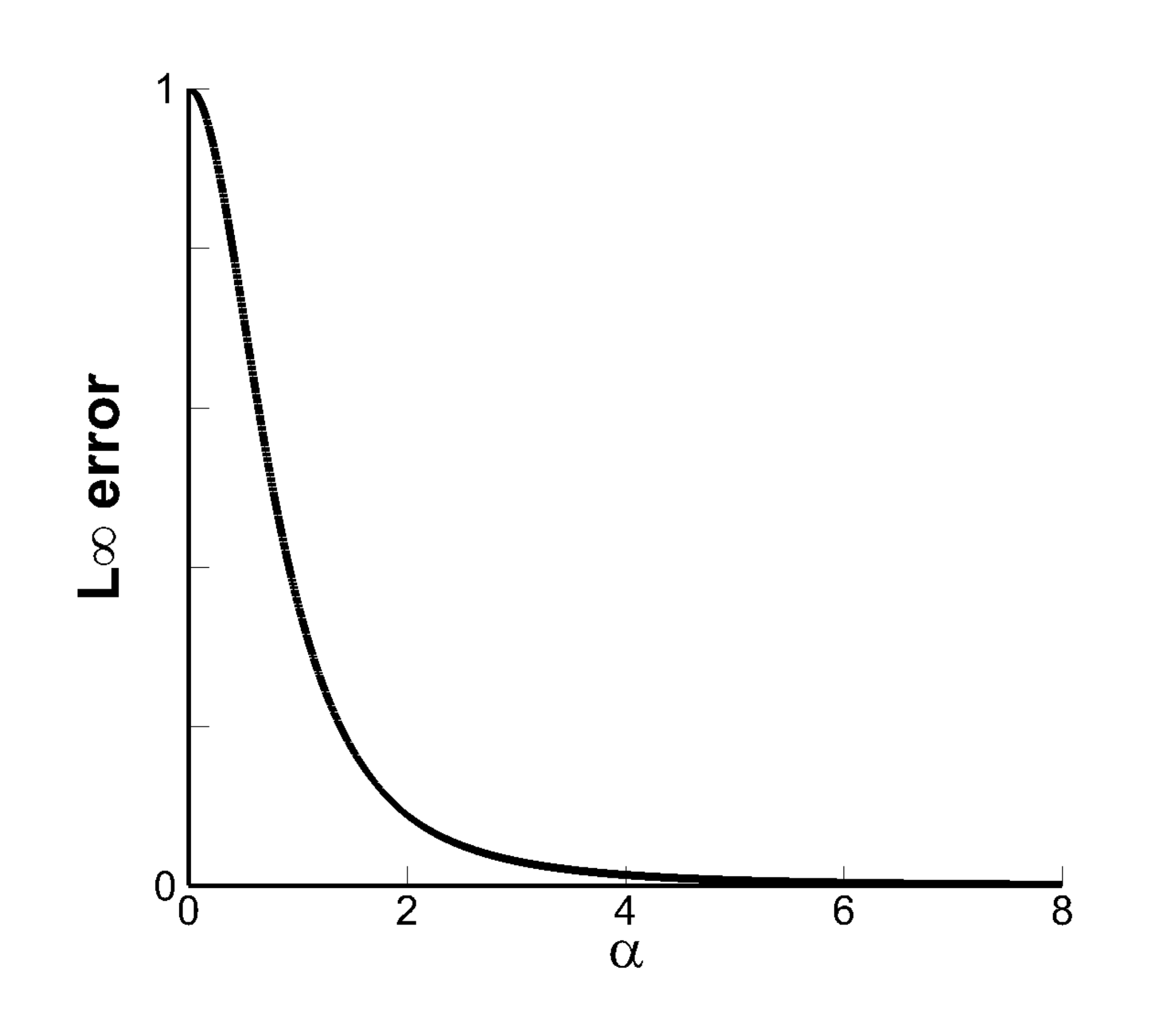}}
	\caption{Errors between the first derivative and the partial sum \eqref{eq:partial_sum} with $u=\sin x$. Top:  $\| \partial_{x}u - \alpha \sum_{p=1}^{k}\mathcal{D}_{L}^{p}[u,\alpha] \|_{\infty}$; bottom: $ \| \partial_{x}u + \alpha \sum_{p=1}^{k}\mathcal{D}_{R}^{p}[u,\alpha] \|_{\infty}$. }
	\label{figadd1}
\end{figure}

%Following the idea in \cite{christlieb2017kernel, christlieb2019kernel}, 
Hence, we can use the partial sum \eqref{eq:partial_sum} to approximate the transport term $B(u,x,t) u_x$,
\begin{align}
\label{eq:scheme_transport}
	B(u,x,t) \, u_x \approx 
	\frac{1}{2}B(u,x,t) \left( u^{-}_x + u_x^{+} \right) + \frac{1}{2}r \left( u^{+}_x - u_x^{-} \right), 
\end{align}
with $r=\max |B(u,x,t)|$ in the relevant range, and $u_{x}^{\pm}$ are the approximations to the derivative $u_x$ obtained by left-biased and right-biased methods, respectively, %$\alpha =\frac{\beta}{b\Delta t}$,
\begin{align}
\label{eq:ux_pn}
%	u^{-}_x = \frac{\beta}{b\Delta t} \sum_{p=1}^{k} \mathcal{D}_{L}^{p} [u, \frac{\beta}{b\Delta t}] (x), \quad \text{and} \quad
%	u^{+}_x = -\frac{\beta}{b\Delta t} \sum_{p=1}^{k} \mathcal{D}_{R}^{p} [u,\frac{\beta}{b\Delta t}] (x).
	u^{-}_x = \alpha_{L} \sum_{p=1}^{k} \mathcal{D}_{L}^{p} [u, \alpha_{L}] (x), \quad \text{and}\quad 
	u^{+}_x = -\alpha_{R} \sum_{p=1}^{k} \mathcal{D}_{R}^{p} [u,\alpha_{R}] (x).
%	\quad \text{and} \quad \alpha =\frac{\beta}{b\Delta t}.
\end{align}

In particular, we take $\alpha_{L} = \alpha_{R} =\beta/(q\Delta t)$, where $\Delta t$ denotes the time step and $\beta$ is a prescribed constant independent of $\Delta t$. Hence, the scheme \eqref{eq:scheme_transport} has an error $\mathcal{O}(\Delta t^k)$. Moreover, \cite{christlieb2017kernel} studied the stability of the semi-discrete scheme for scalar linear function $u_t=B u_x$, coupling \eqref{eq:scheme_transport} with exact integral and the classic explicit $k$-th order SSP RK methods in time. 
The scheme can be proved to be A-stable and hence allows large time step if $\beta$ in \eqref{eq:ux_pn} is appropriately chosen.

\begin{thm}\label{thm2}
	For linear function $u_t=B\,u_x$ with periodic boundary conditions, we use the classic explicit $k$-th order SSP RK methods for time evolution.
	\begin{enumerate}
		\item For $k=1, 2$, if we use \eqref{eq:scheme_transport} and \eqref{eq:ux_pn} to approximate the spacial derivative $u_{x}^{\pm}$, then the scheme is $k$-th order in time. Moreover,  there exists constant $\beta_{1,k,max} > 0$, such that the scheme is A-stable provided $0<\beta\leq\beta_{1,k,max}$.
		
		\item For $k=3$, we use \eqref{eq:scheme_transport} and a modified approximation to $u_x^{\pm}$,
		\begin{subequations}
		\label{eq:ux_pn_mod}
		\begin{align}
			& u^{-}_x = \alpha \sum_{p=1}^{3} \mathcal{D}_{L}^{p} [u, \alpha] (x) - \frac{1}{2} (\mathcal{D}_{L} +\mathcal{D}_{R}) \star \mathcal{D}_{L}^{2} [u, \alpha](x), \\
			& u^{+}_x = -\alpha \sum_{p=1}^{3} \mathcal{D}_{R}^{p} [u,\alpha] (x) + \frac{1}{2} (\mathcal{D}_{L} +\mathcal{D}_{R}) \star \mathcal{D}_{R}^{2} [u, \alpha](x).
		\end{align}
		\end{subequations}
%		with $\alpha =\beta/(q\Delta t)$. 
		Then the scheme is third order in time. Moreover,  there exists constant $\beta_{1,3,max} > 0$, such that the scheme is A-stable provided $0<\beta\leq\beta_{1,3,max}$.		
	\end{enumerate}
	And the constant $\beta_{1,k,\max}$ for $k = 1, 2, 3$ are summarized in Table \ref{tab0}.
\end{thm}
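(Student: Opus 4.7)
The plan is to use a von Neumann (Fourier) analysis. After substituting a single Fourier mode $u(x,t)=\hat u(t)\,e^{i\omega x}$, the operators $\mathcal{L}_L$, $\mathcal{L}_R$, $\mathcal{D}_L$, $\mathcal{D}_R$ become pointwise multipliers with closed-form symbols $\mathcal{D}_L\leftrightarrow w/(1+w)$ and $\mathcal{D}_R\leftrightarrow -w/(1-w)$, where $w:=i\omega/\alpha$. Summing the geometric series collapses $\alpha\sum_{p=1}^{k}\mathcal{D}_L^{p}$ and $-\alpha\sum_{p=1}^{k}\mathcal{D}_R^{p}$ to $i\omega(1-v_{\ast}^{k})$ with $v_L=w/(1+w)$ and $v_R=-w/(1-w)$, so the upwind combination in \eqref{eq:scheme_transport} produces a spatial symbol of the form $\alpha\,s(\zeta,B)$ with $\zeta:=\omega/\alpha$. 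The crucial algebraic observation is that since $\alpha\Delta t=\beta/q$ by construction, the quantity $\hat\lambda:=\Delta t\cdot(\text{spatial symbol})=(\beta/q)\,s(\zeta,B)$ depends only on the real parameter $\zeta$ and on $\beta$, not separately on $\omega$ and $\Delta t$. Consequently the RK amplification factor $G(\hat\lambda)=\sum_{j=0}^{k}\hat\lambda^{j}/j!$ generated from \eqref{eq:rk1}--\eqref{eq:rk3} depends only on $\zeta$ and $\beta$, and A-stability reduces to verifying $\sup_{\zeta\in\mathbb{R}}|G(\hat\lambda(\zeta,\beta))|\leq 1$.

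For the accuracy claim, Theorem \ref{thm1} gives $\|\partial_x u-\alpha\sum_{p=1}^{k}\mathcal{D}_{\ast}^{p}[u,\alpha]\|_{\infty}=\mathcal{O}(\alpha^{-k})=\mathcal{O}(\Delta t^{k})$, and the $k$-th order SSP RK method introduces a local-in-time error of the same order. Standard combination then yields the stated $k$-th order accuracy in time. For $k=3$, the correction terms $\tfrac{1}{2}(\mathcal{D}_L+\mathcal{D}_R)\star\mathcal{D}_L^{2}$ and $\tfrac{1}{2}(\mathcal{D}_L+\mathcal{D}_R)\star\mathcal{D}_R^{2}$ in \eqref{eq:ux_pn_mod} each carry a factor of $\alpha^{-3}=\mathcal{O}(\Delta t^{3})$, by the $\alpha^{-1}$-scaling of every $\mathcal{D}_{\ast}$ made explicit in Theorem \ref{thm1}, so third-order accuracy is preserved.

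For the stability threshold with $k=1,2$, substituting $w=i\zeta$ gives an explicit rational expression for $\hat\lambda(\zeta,\beta)$; since $|1-v_{\ast}^{k}|\to 0$ like $1/|\zeta|$ as $|\zeta|\to\infty$, the trajectory of $\hat\lambda$ over $\zeta\in\mathbb{R}$ is a bounded closed curve whose diameter is $O(\beta)$. As the first- and second-order RK stability regions each contain a neighborhood of the origin, the curve lies inside $\{|G|\leq 1\}$ for all sufficiently small $\beta>0$; $\beta_{1,k,\max}$ is then the supremum of admissible $\beta$. For $k=3$, the classical RK3 stability boundary is tangent to the imaginary axis at the origin from outside, and a Taylor expansion shows that the unmodified symbol $i\omega(1-v_{L}^{3})$ sits on the wrong side of this tangency along the imaginary direction. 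The correction in \eqref{eq:ux_pn_mod} contributes a negative-real component that pushes the curve into the interior of the RK3 stability region, again producing a positive threshold $\beta_{1,3,\max}$.

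The main obstacle I expect is the $k=3$ case: proving that the modified symbol remains inside the RK3 stability region uniformly in $\zeta$ requires matching the leading-order Taylor behavior of the correction near $\zeta=0$ against the curvature of the RK3 stability boundary on the imaginary axis, and then controlling the global behavior of the curve for all $\zeta\in\mathbb{R}$. In principle this reduces to a one-parameter polynomial inequality that is tractable in the limits $\zeta\to 0$ and $|\zeta|\to\infty$, with the sharp values of $\beta_{1,k,\max}$ reported in Table \ref{tab0} obtained by maximizing subject to this constraint, possibly aided by numerical root-finding along the boundary of the stability region.
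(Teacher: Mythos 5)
Your overall framework is the right one, and it is in fact how this result is established: the paper itself does not prove Theorem \ref{thm2} but defers to \cite{christlieb2017kernel}, and the same von Neumann machinery is what the paper uses for its own stability proof (Theorem \ref{th:stb1}). Your symbols $\mathcal{D}_L\leftrightarrow w/(1+w)$, $\mathcal{D}_R\leftrightarrow -w/(1-w)$, the geometric-series collapse to $i\omega(1-v_\ast^k)$, the identification $G(\hat\lambda)=\sum_{j=0}^k\hat\lambda^j/j!$ for the SSP RK schemes on linear problems, and above all the scale-invariance observation that $\hat\lambda$ depends only on $(\zeta,\beta)$ because $\alpha\Delta t$ is fixed, are all correct; the last point is precisely what makes A-stability (a condition on $\beta$ alone) possible. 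The accuracy part of your argument is also fine.

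However, your stability argument for $k=1,2$ has a genuine gap. You assert that ``the first- and second-order RK stability regions each contain a neighborhood of the origin'' and conclude that a curve of diameter $O(\beta)$ through the origin must lie inside the stability region once $\beta$ is small. The premise is false: the origin is a \emph{boundary} point of the stability region of every explicit RK method, and for RK1 and RK2 the region is locally contained in the left half-plane, tangent to the imaginary axis (indeed $|1+iy|>1$ and $|1+iy+(iy)^2/2|^2=1+y^4/4>1$ for every $y\neq0$). Shrinking the curve therefore proves nothing; your argument would equally ``prove'' unconditional stability of forward Euler with centered differences, whose symbol is purely imaginary and bounded, yet which is unstable for every $\beta>0$. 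What closes the argument is the strict dissipativity of the upwind symbol together with a quantitative comparison of scales, uniform in $\zeta$: for $k=1$ one has $\hat\lambda=\beta\, i\zeta/(1-i\zeta)$, hence $\mathrm{Re}\,\hat\lambda=-\beta s$ and $|\hat\lambda|^2=\beta^2 s$ with $s=\zeta^2/(1+\zeta^2)$, so $|1+\hat\lambda|^2=1-s\beta(2-\beta)\le 1$ for all $\zeta$ exactly when $\beta\le 2$; this is where the sharp entry $\beta_{1,1,\max}=2$ of Table \ref{tab0} comes from, and the analogous computation (comparing $-\mathrm{Re}\,\hat\lambda$ against the quartic behavior of the RK2 boundary, which lies at $\mathrm{Re}\,z\approx -(\mathrm{Im}\,z)^4/8$ near the origin) is what must be verified for $k=2$. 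Your $k=3$ paragraph is closer to the mark: the unmodified symbol indeed has anti-dissipative real part $\approx+\beta\zeta^4$, which exits RK3's stability region (whose boundary bulges only $\approx(\mathrm{Im}\,z)^4/24$ into the right half-plane) for small $\beta$, and the correction in \eqref{eq:ux_pn_mod} flips this real part to $\approx-2\beta\zeta^6$; but this too must be carried out as an explicit sign-and-magnitude computation of $\mathrm{Re}\,\hat\lambda$ and $|\hat\lambda|$ over all $\zeta$, not inferred from the smallness of the curve.
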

\begin{table}[h]
	\caption{\label{tab0} $\beta_{1,k,\max}$ in Theorem \ref{thm2} for $k=1,\,2,\,3$.}
	\bigskip
	\centering
	\begin{tabular}{cccc}
		\hline
		k  &  1 &  2  & 3  \\\hline
		$\beta_{1,k,\max}$  &  2  &  1  &  1.243   \\\hline
	\end{tabular}
\end{table}

Moreover, \cite{christlieb2017kernel} showed that combining the semi-discrete scheme \eqref{eq:scheme_transport} with suitable spatial approximation to the integral \eqref{eq:I_LR}, the fully discrete scheme can be uncoditionally stable, even though the scheme is in MOL framework with explicit time discretization.
Before going further, let us give a brief intuition for why this can be achieved.  
If we simply apply Forward Euler in time and the first difference in space (FTBS scheme) on linear advection equation $u_t+c u_x=0, c>0$, we have 
\begin{align}
\label{eq:FTBS}
	\frac{u_j^{n+1}-u_j^n}{\Delta t} + c  \frac{u_j^n-u_{j-1}^n}{\Delta x}=0,
\end{align}
where $\Delta t$ is the time step, $\Delta x$ is the spatial step and $u_j^n$ is the numerical approximation to $u(x_j,t^n)$. Figurer \ref{figadd2.1} and \ref{figadd2.2} are the  stencils for this method when $c\Delta t< \Delta x$ and $c\Delta t >\Delta x$, respectively. The loss of stability in almost all explicit methods can be thought of as a lack of information to carry out the reconstruction, which is depicted in Figure \ref{figadd2.2} where the green dashed lines show the footprint of the stencil. However, different from the local method \eqref{eq:FTBS}, the kernel based approach, when combined with Forward Euler in time, is a ``global" method and has a stencil as depicted in Figurer \ref{figadd2.3}, where the green dashed lines indicate the spatial points used in the update. That is to say, for any size time step of the explicit method using the kernel based approximation to the derivative, the method has access to sufficient information.  With a few careful choices, this is validated in the analysis latter part of this paper and in our previous work.

\begin{figure}[h]
	\centering
	\subfigure[FTBS, $c\Delta t<\Delta x$] {\label{figadd2.1} \includegraphics[width=.32\textwidth]{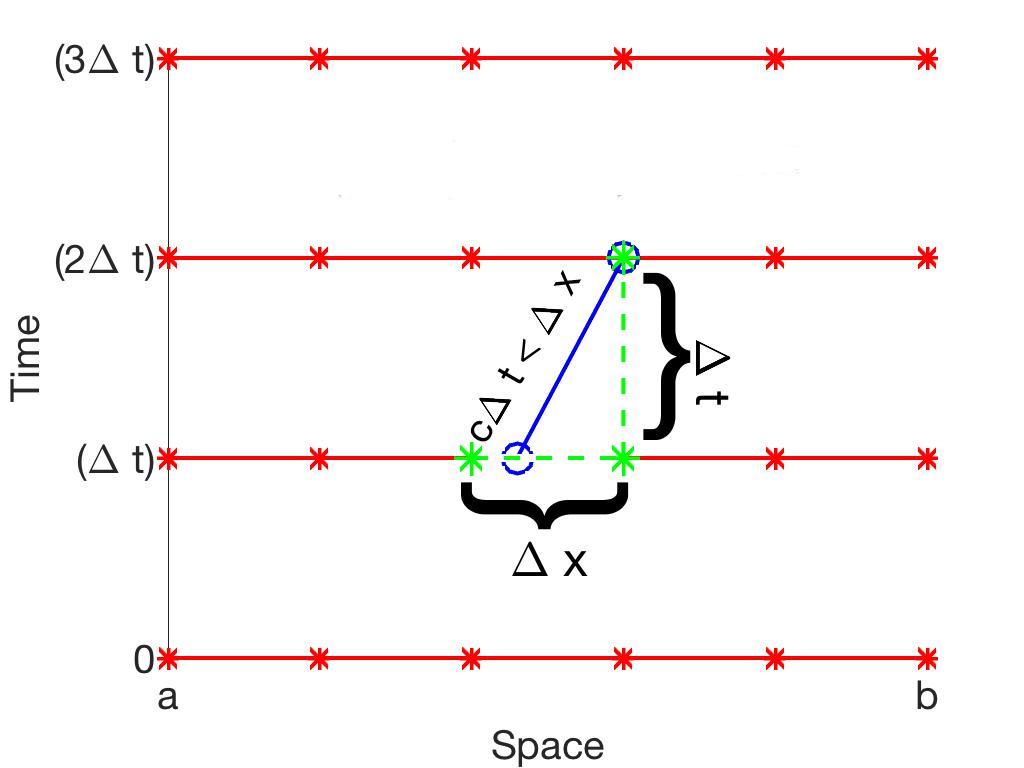}}
	\subfigure[FTBS, $c\Delta t>\Delta x$] {\label{figadd2.2} \includegraphics[width=.32\textwidth]{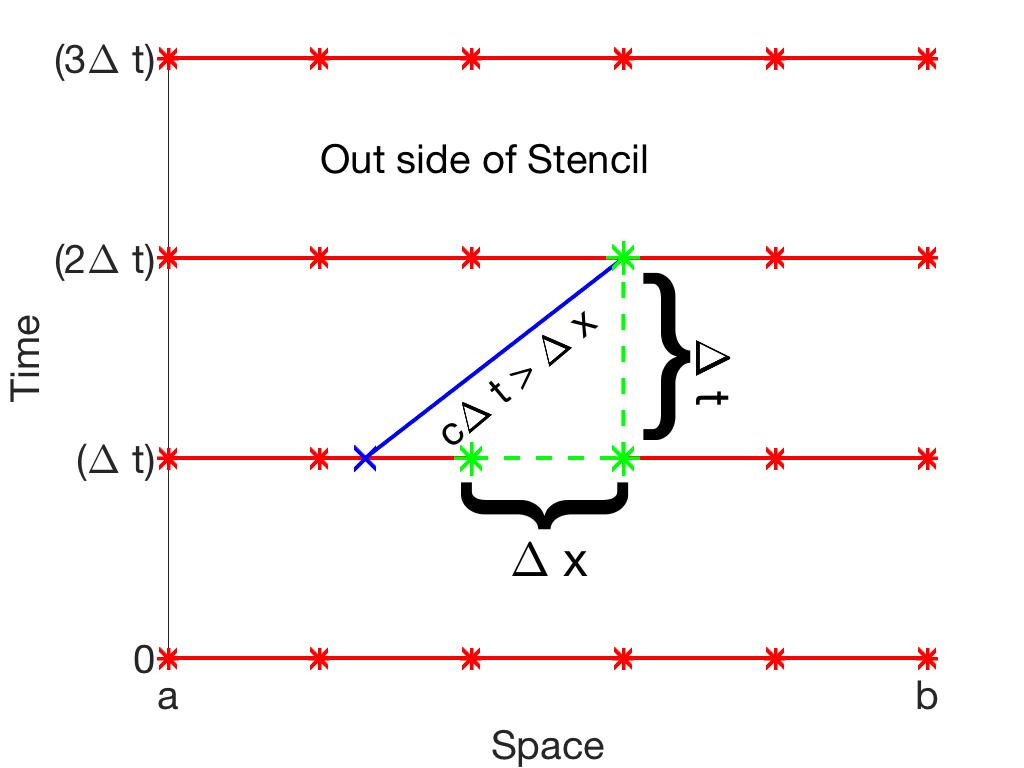}}
	\subfigure[Kernal based mathod.]{\label{figadd2.3} \includegraphics[width=.32\textwidth]{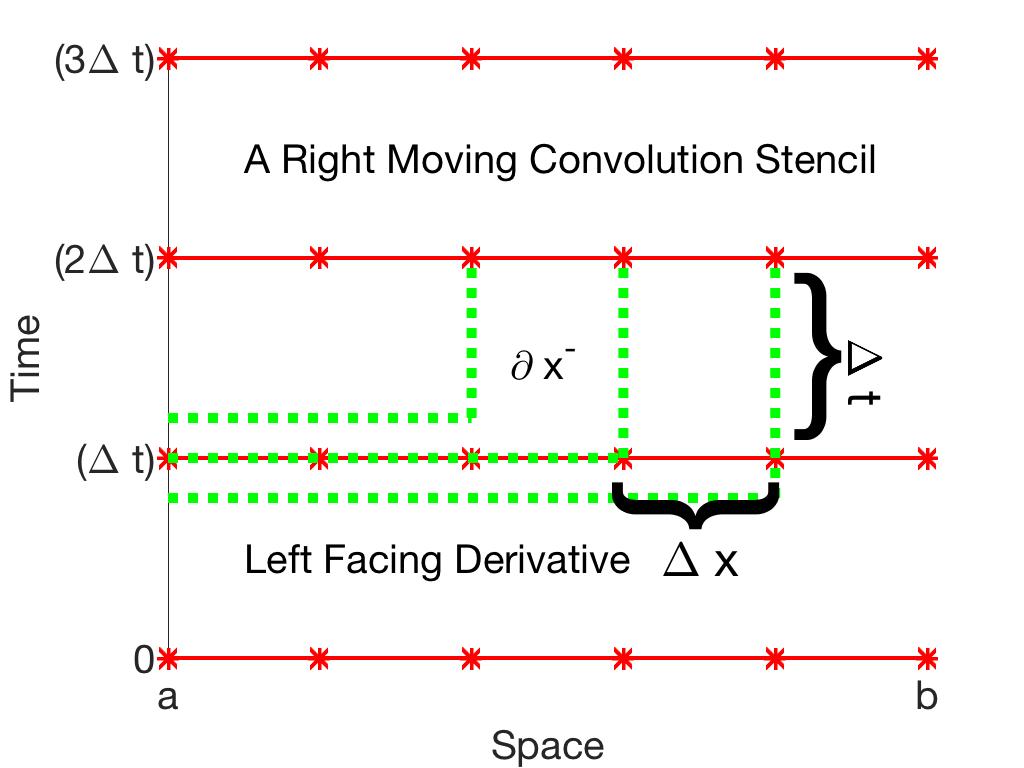}}
	\caption{Stencil used for the linear advection equation $u_t+c u_x=0$, $c>0$, with Euler forward in time.}
	\label{figadd2}
\end{figure}

Thanks to the well designed scheme for the transport part, in the following parts, we will focus on the function \eqref{eq:equation} with diffusion term only, that is
\begin{align}
\label{eq:equation2}
	\partial_t u(\mathbf{x},t) = \nabla \cdot \left( A(u, \textbf{x}, t) \nabla u\right).
\end{align}
Consequently, the system \eqref{eq:system} in one dimension turns to 
\begin{align}
\label{eq:system2}
	\left\{\begin{array}{l}
	u_t = \partial_{x} \left( A(u, x, t) w \right) ,\\
	w = \partial_{x} u. \\
%	u_t = \nabla \cdot \left( A(u, \textbf{x}, t) \mathbf{w} \right) ,\\
%	\mathbf{w} = \nabla u. \\
	\end{array}\right.
\end{align}

\subsection{Insufficiency of the original method}

Considering the 1D system \eqref{eq:system2}, it is straightforward that we can use the partial sums \eqref{eq:partial_sum} to approach $\partial_x$. For instance,  
\begin{equation} 
\label{eq:newmth1}
\begin{aligned}
	& (A w)_x \approx \alpha \sum\limits_{p=1}^{2k}\mathcal{D}_L^p[Aw,\alpha] 
	=:\mathcal{H}^k_1[u,A,\alpha], \\
	& w=u_{x} \approx -\alpha\sum\limits_{q=1}^{2k}\mathcal{D}_R^q[u,\alpha].
%	+\mathcal{O}\left(\left(\frac{1}{\alpha}\right)^{2k+1}\right)
\end{aligned}
\end{equation}
%	(a(u,x)u_{x})_{x}=-\alpha(\sum\limits_{p=1}^{k}\mathcal{D}_L^p)(\sum\limits_{q=1}^{2k}\mathcal{D}_R^q)[u,\alpha](x)+\mathcal{O}(\Delta t^k)

\noindent
The operator $\mathcal{H}^k_1[u,A,\alpha]$ has a truncation error $\mathcal{O}(1/\alpha^{2k})$. Considering the linear function $u_t=A u_{xx}$ with $A>0$, the scheme that employs \eqref{eq:newmth1} and $k$-th order SSP RK method can be proved to be A-stable, if we take $\alpha=\sqrt{\beta/(A\Delta t)}$ and $0<\beta\leq \beta_{k}$.
However, we found that the A-stable interval $(0, \beta_{k})$ was pretty narrow, e.g., $\beta_{3}=2/9$. As a consequence, schemes general large error $1/\alpha^{2k} = (A\Delta t / \beta)^k \geq (A\Delta t/\beta_k)^k$.  
%if we use with \eqref{eq:newmth1} and $k$-th order in SSP RK method. Then the scheme can be proved A-stable if we take $\alpha=\sqrt{\beta/(A\Delta t)}$ and $0<\beta\leq \beta_{k}$.
%However, there exists some disadvantages. 

Besides that, the schemes show another disadvantage.
As an example, we look at a special case that $u=\sin x$, $A(u,x)=1$ and the interval $[a,b]=[0,2\pi]$. Then approximation \eqref{eq:newmth1} would be
	$$\mathcal{H}^k_1[u,A,\alpha](x) = - \left( \alpha \sum\limits_{p=1}^{2k} \mathcal{D}_L^p \right) \star \left( \alpha\sum\limits_{q=1}^{2k}\mathcal{D}_R^q\right)[\sin x, \alpha]. $$ 
%{\color{red}
	In Figure \ref{figerr1}, we plot the error $\Vert (\sin x)_{xx} - \mathcal{H}^k_1[\sin x,1,\alpha] \Vert_\infty$ for $k=1,2,3$. It is observed that the error is not a monotone decreasing function of $\alpha$, indicating that the scheme cannot converge uniformly. 
	Consequently, refining meshes, or increasing $\alpha$ equivalently, could result in larger error. Furthermore, changing the function to $u=\sin(2x)$, the error lines in Figure \ref{figerr1ex} tell us that the monotone decrasing interval of each scheme would change at the same time. This means for each scheme we can not find a uniform monotone interval for all smooth functions.
%	Comparing the three figures, we can see that higher order scheme may give larger error than the lower order one when using the same $\alpha$. 
	{\color{red}}
%	The schemes can be proved to be A-stable. However, we found that the A-stable interval $(0, \beta_{k})$ was pretty narrow, e.g., $\beta_{3}=2/9$. As a consequence, schemes general large error $1/\alpha^{2k} = (A\Delta t / \beta)^k \geq (A\Delta t/\beta_k)^k$.  
%}
%Another problem is that the scheme costs double or more computational complexity to get the same accuracy than before when computing terms like $g(u)_{xx}$.}

\begin{figure}[htb]
\centering
		\subfigure[$k=1$]{\includegraphics[width=.32\textwidth]{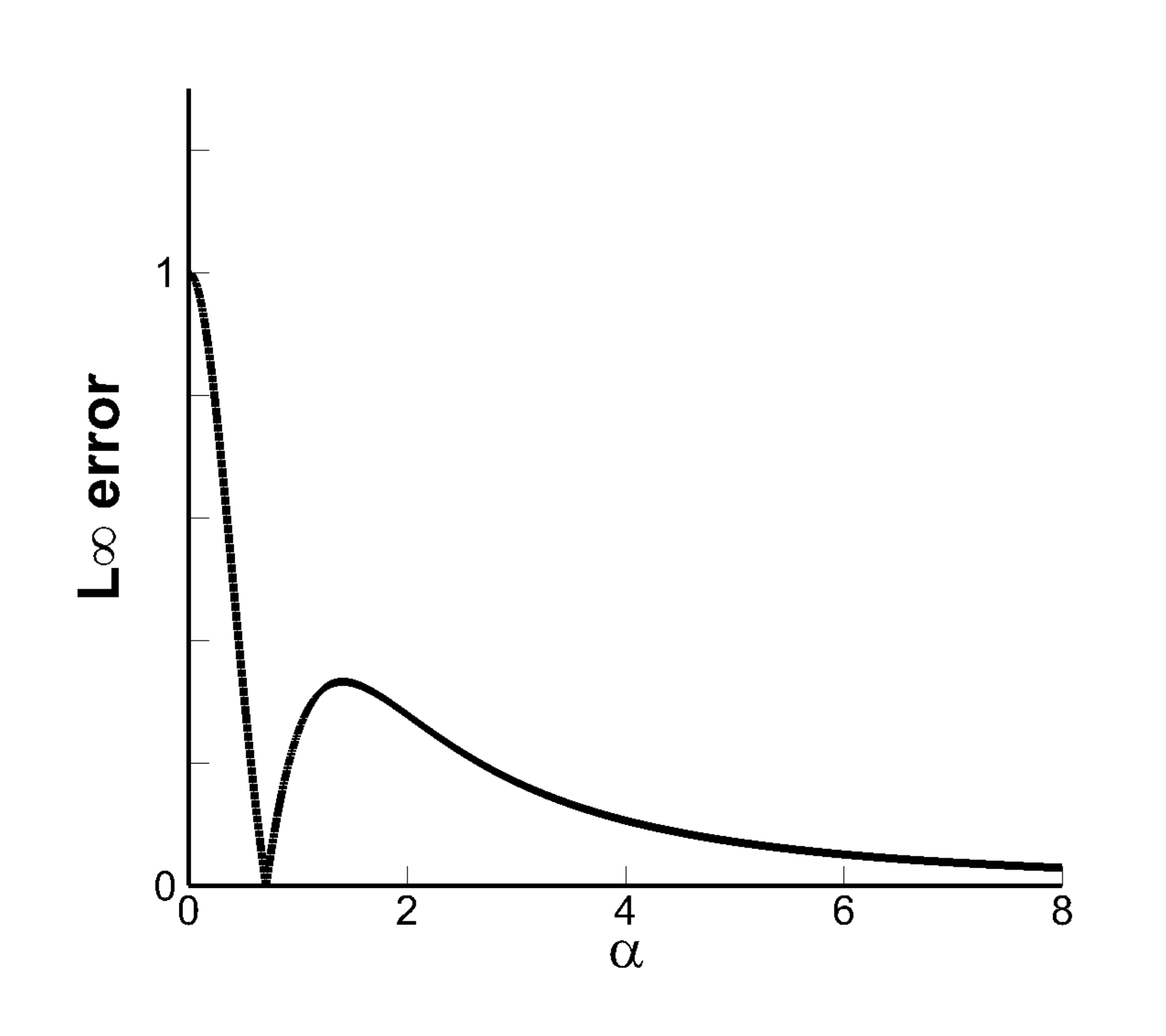}}
		\subfigure[$k=2$]{\includegraphics[width=.32\textwidth]{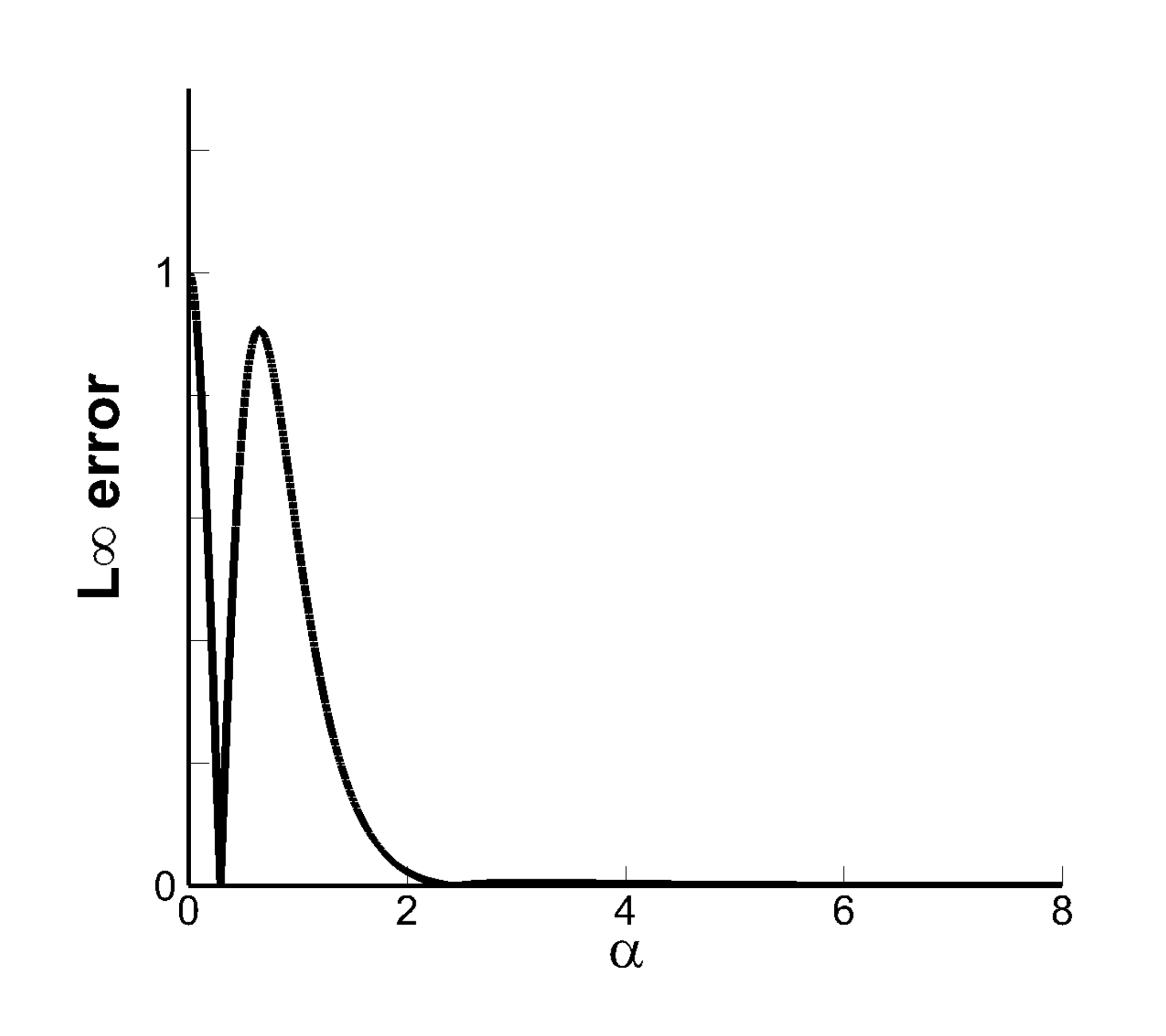}}
		\subfigure[$k=3$]{\includegraphics[width=.32\textwidth]{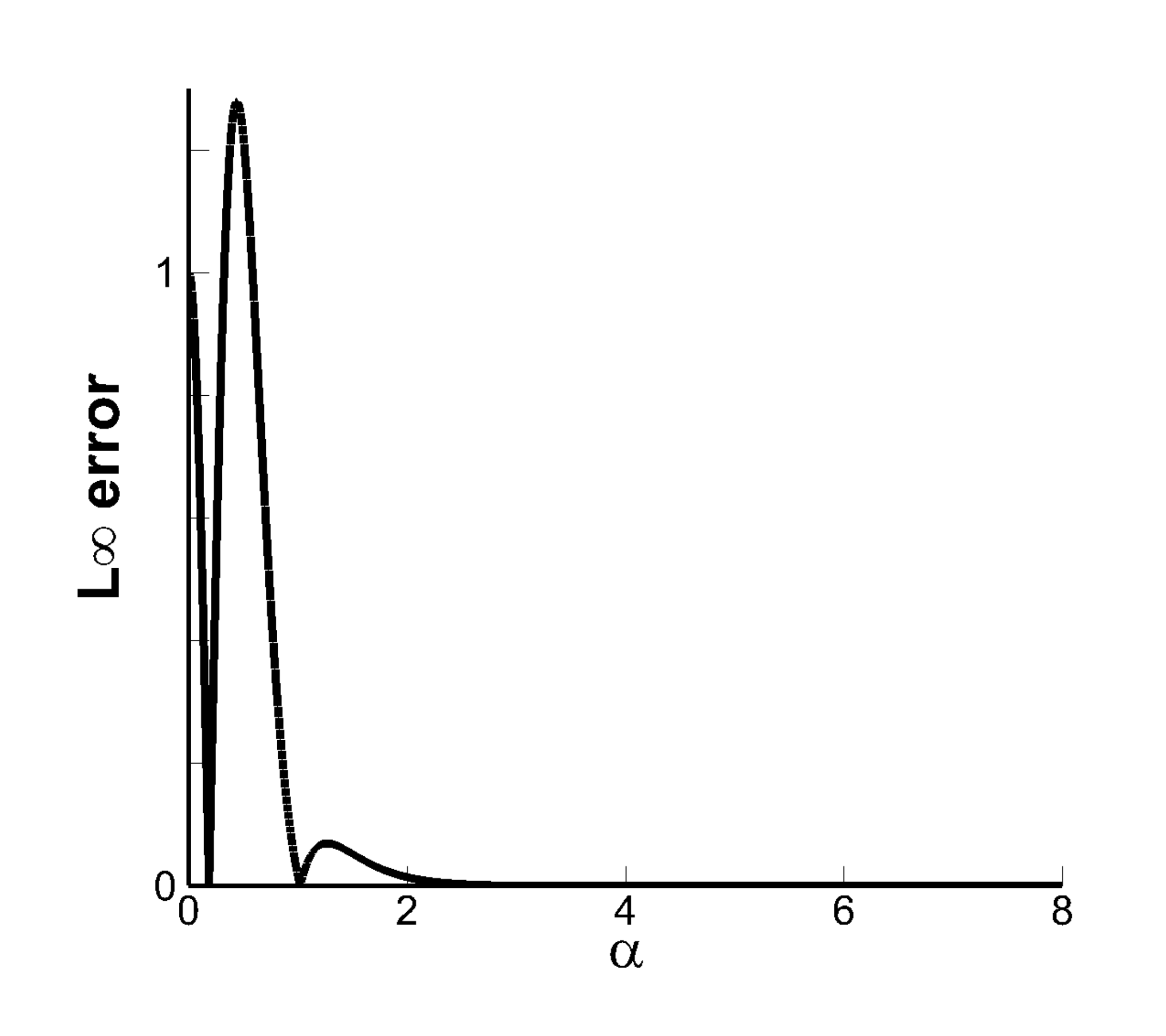}}
	\caption{$\Vert (\sin x)_{xx}-\mathcal{H}^k_1[\sin x,1,\alpha] \Vert_\infty$, with $\mathcal{H}^k_1$ given in \eqref{eq:newmth1}.}
\label{figerr1}
\end{figure}

\begin{figure}[htb]
	\centering
	\subfigure[$k=1$]{\includegraphics[width=.32\textwidth]{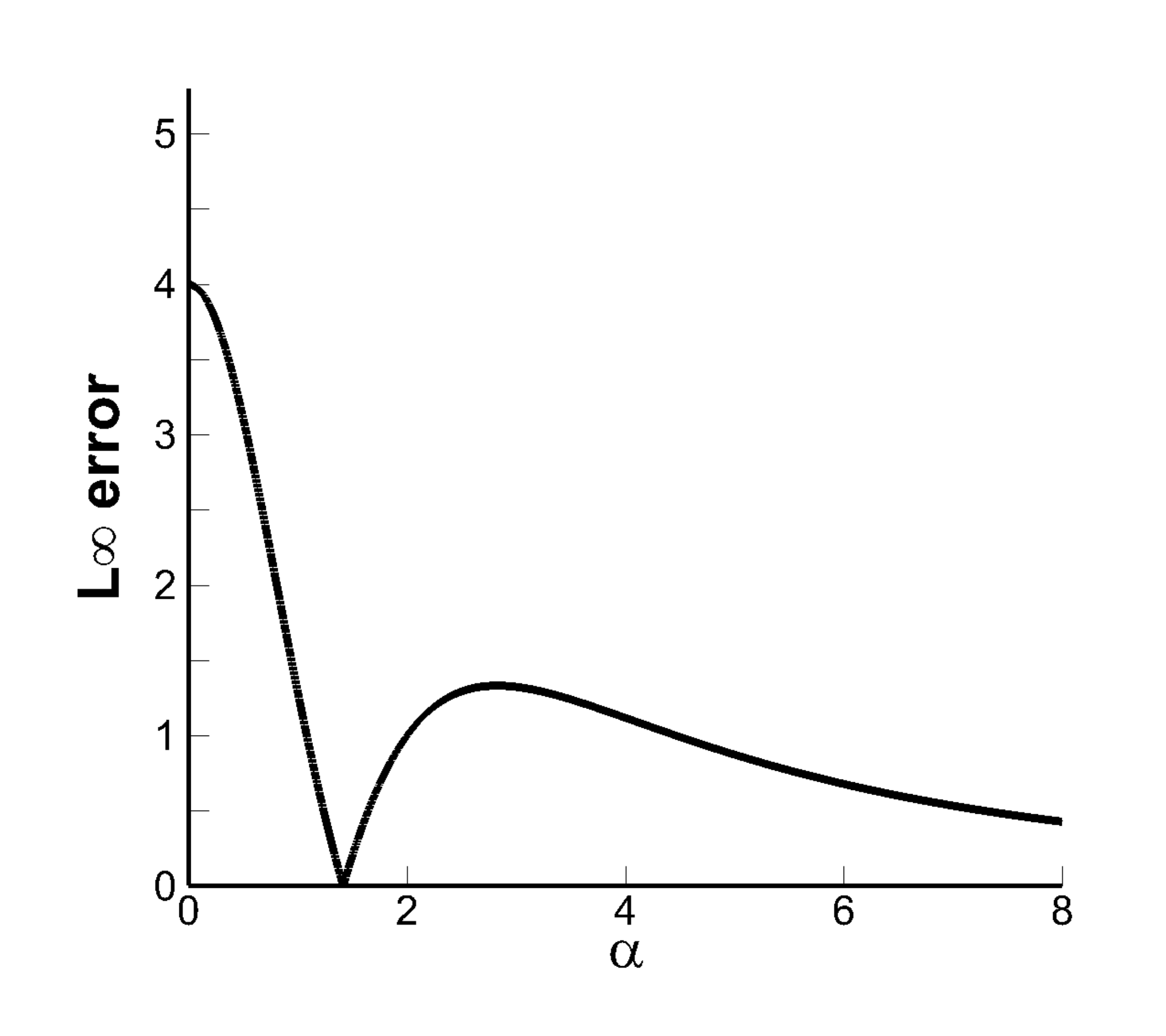}}
	\subfigure[$k=2$]{\includegraphics[width=.32\textwidth]{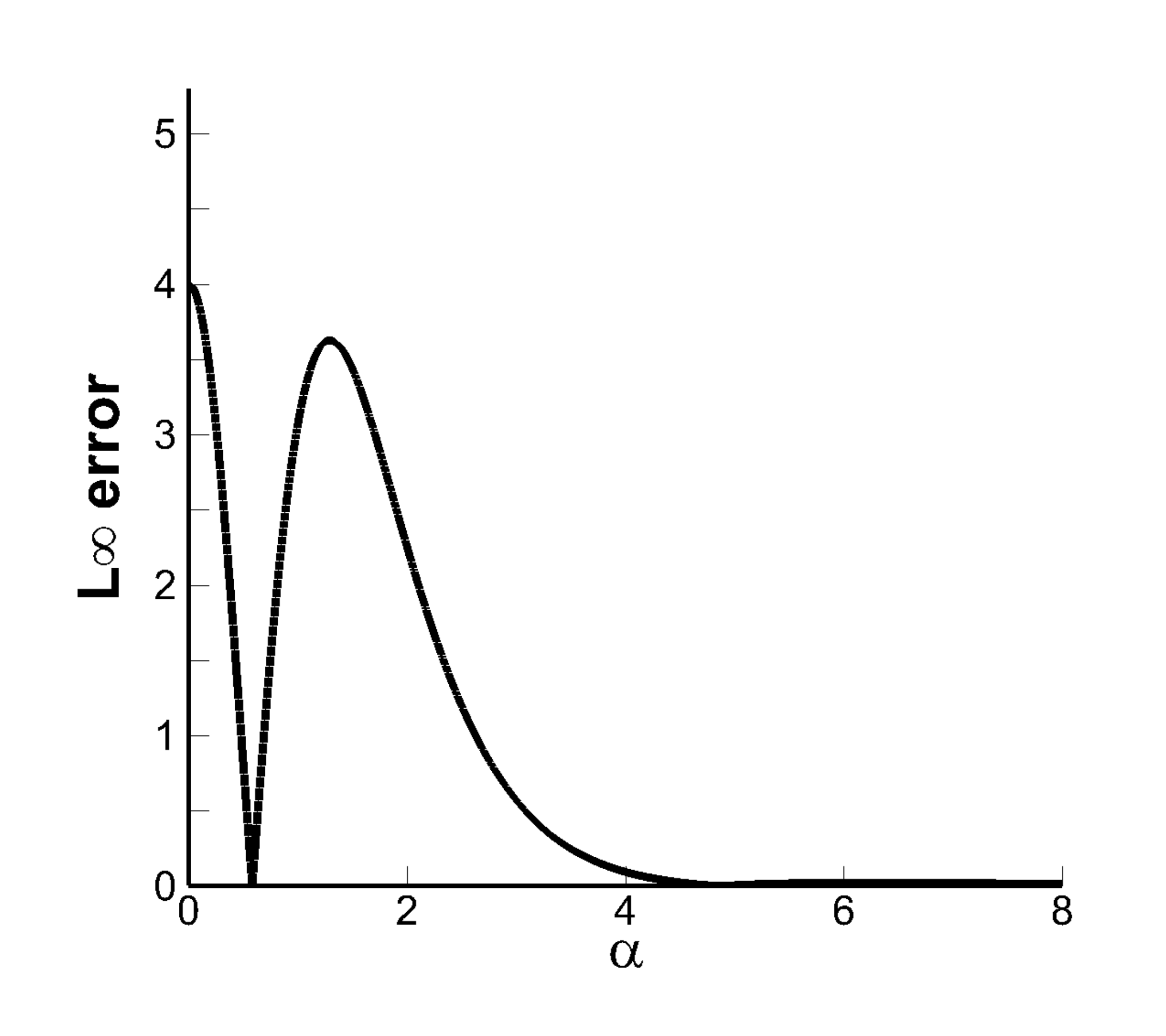}}
	\subfigure[$k=3$]{\includegraphics[width=.32\textwidth]{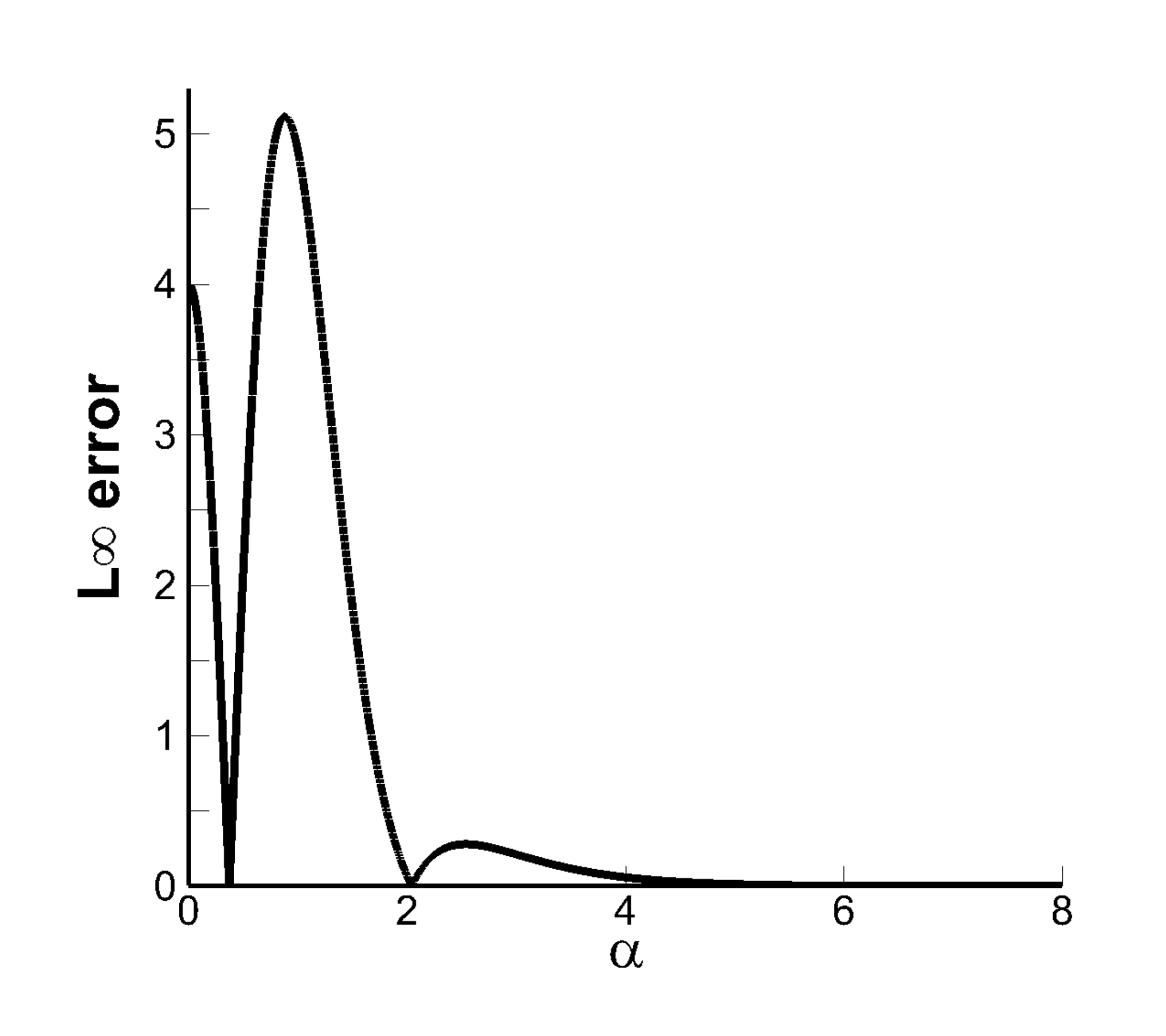}}
	\caption{$\Vert (\sin 2x)_{xx}-\mathcal{H}^k_1[\sin 2x,1,\alpha] \Vert_\infty$, with $\mathcal{H}^k_1$ given in \eqref{eq:newmth1}.}
	\label{figerr1ex}
\end{figure}

%It can be seen that the error is not monotonous decreasing of $1/\alpha$ which causes that the higher order scheme's error is greater than the lower one's when these values of $\alpha$ are equal. Another problem is that the scheme costs double or more computational complexity to get the same accuracy than before when computing terms like $g(u)_{xx}$.

Alternatively, it is naturally to use the average $\frac{1}{2} \alpha \sum\limits_{p=1}^{2k} (\mathcal{D}_L^p-\mathcal{D}_R^p)$ to approach $\partial_x$ in \eqref{eq:system2}, that is
\begin{equation}
\label{eq:method2}
\begin{aligned}
	& (A w)_x \approx \frac{\alpha}{2} \sum\limits_{p=1}^{2k} \left( \mathcal{D}_L^p[Aw,\alpha] -\mathcal{D}_R^p[Aw,\alpha] \right)
	=:\mathcal{H}^k_2[u,A,\alpha] \\
	& w=u_{x} \approx \frac{\alpha}{2} \sum\limits_{q=1}^{2k}\left( \mathcal{D}_L^q[u,\alpha] - \mathcal{D}_R^q[u,\alpha] \right).
\end{aligned}
\end{equation}
The scheme has a truncation error $\mathcal{O}(1/\alpha^{2k})$ and the same problem as scheme \eqref{eq:newmth1}, learning from Figure \ref{figerr2} with
	$$\mathcal{H}^k_2[ \sin x,1,\alpha](x) = \left(\frac{\alpha}{2} \sum\limits_{p=1}^{2k} \left( \mathcal{D}_L^p -\mathcal{D}_R^p \right)\right) \star \left(\frac{\alpha}{2} \sum\limits_{q=1}^{2k}\left( \mathcal{D}_L^q - \mathcal{D}_R^q \right)\right)[\sin x, \alpha]. $$ 
%However, It is unacceptable to us that the scheme costs four times computational complexity to get the same order as $\mathcal{D}_{0}$. And learn from Figture \ref{figerr2} the scheme still has the same problem in error. 

%In particular, when $k=1$, it can be seen from Theorem \ref{thm1} that
%\begin{align}
%\label{eq:dxx1}
%	\frac{\alpha}{2}(\mathcal{D}_L-\mathcal{D}_R)[v,\alpha](x)
%	=\sum\limits_{p=1}^m\left(\frac{1}{\alpha}\right)^{2p-2}\partial_x^{2p-1}v(x) - \frac{1}{2}\left(\frac{1}{\alpha}\right)^{2m}(\mathcal{L}_L^{-1}-\mathcal{L}_R^{-1})[\partial_x^{2m+1}v,\alpha](x).
%\end{align}
%For $k>1$, we have
%\begin{align}
%\label{eq:newmth2}
%\frac{\alpha}{2}\sum\limits_{p=1}^{2k}(\mathcal{D}_L^p-\mathcal{D}_R^p)[v,\alpha](x)=\partial_xv(x)+\mathcal{O}(\left(\frac{1}{\alpha}\right)^{2k})\\
%\frac{\alpha^2}{4}\left(\sum\limits_{p=1}^{2k}(\mathcal{D}_L^p-\mathcal{D}_R^p)\right)^2[v,\alpha](x)=\partial_{xx}v(x)+\mathcal{O}(\left(\frac{1}{\alpha}\right)^{2k})
%\end{align}
%The scheme is $k$-th order accuracy. 

\begin{figure}[htb]
\centering
	\subfigure[$k=1$]{\includegraphics[width=.32\textwidth]{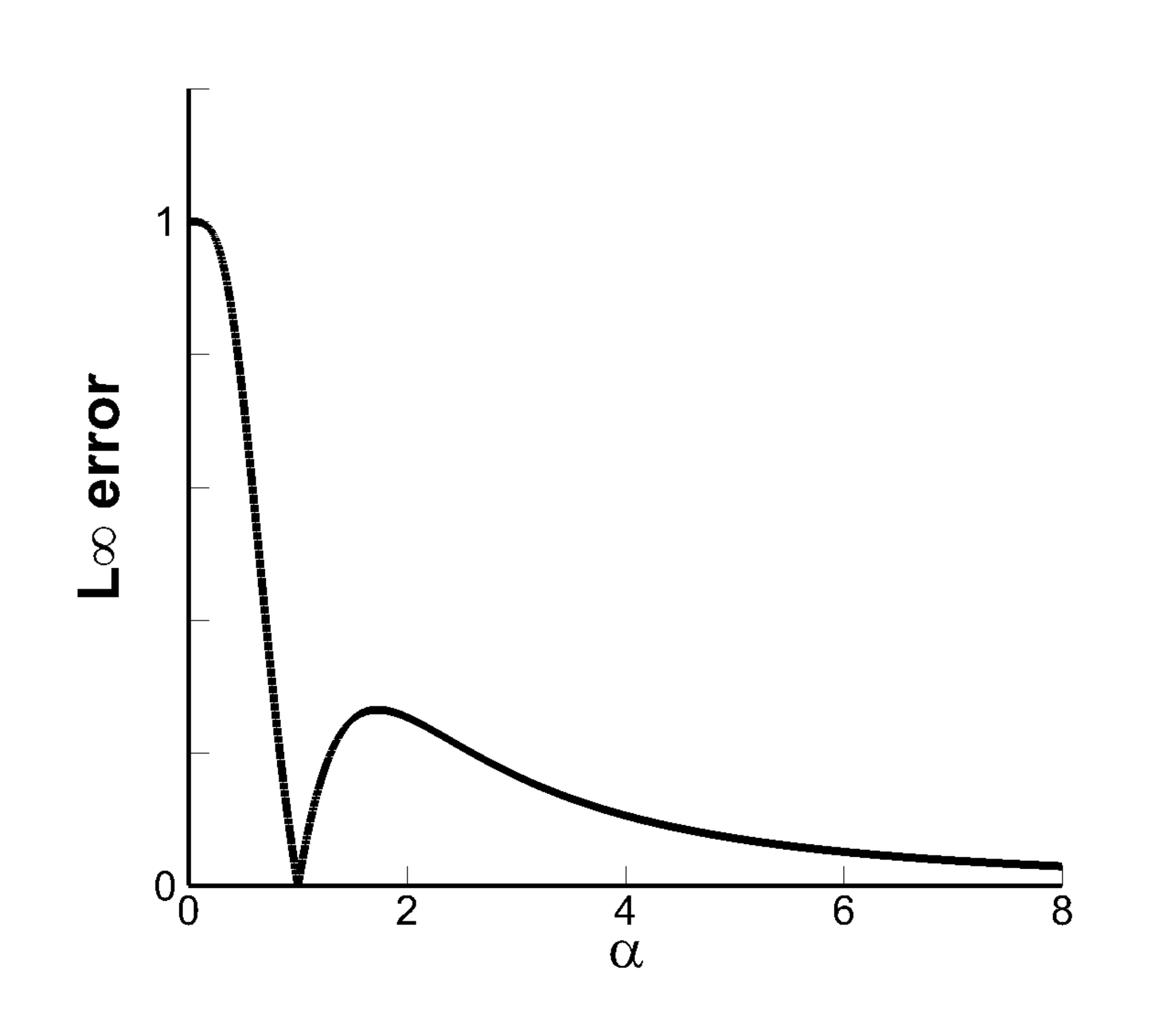}}
	\subfigure[$k=2$]{\includegraphics[width=.32\textwidth]{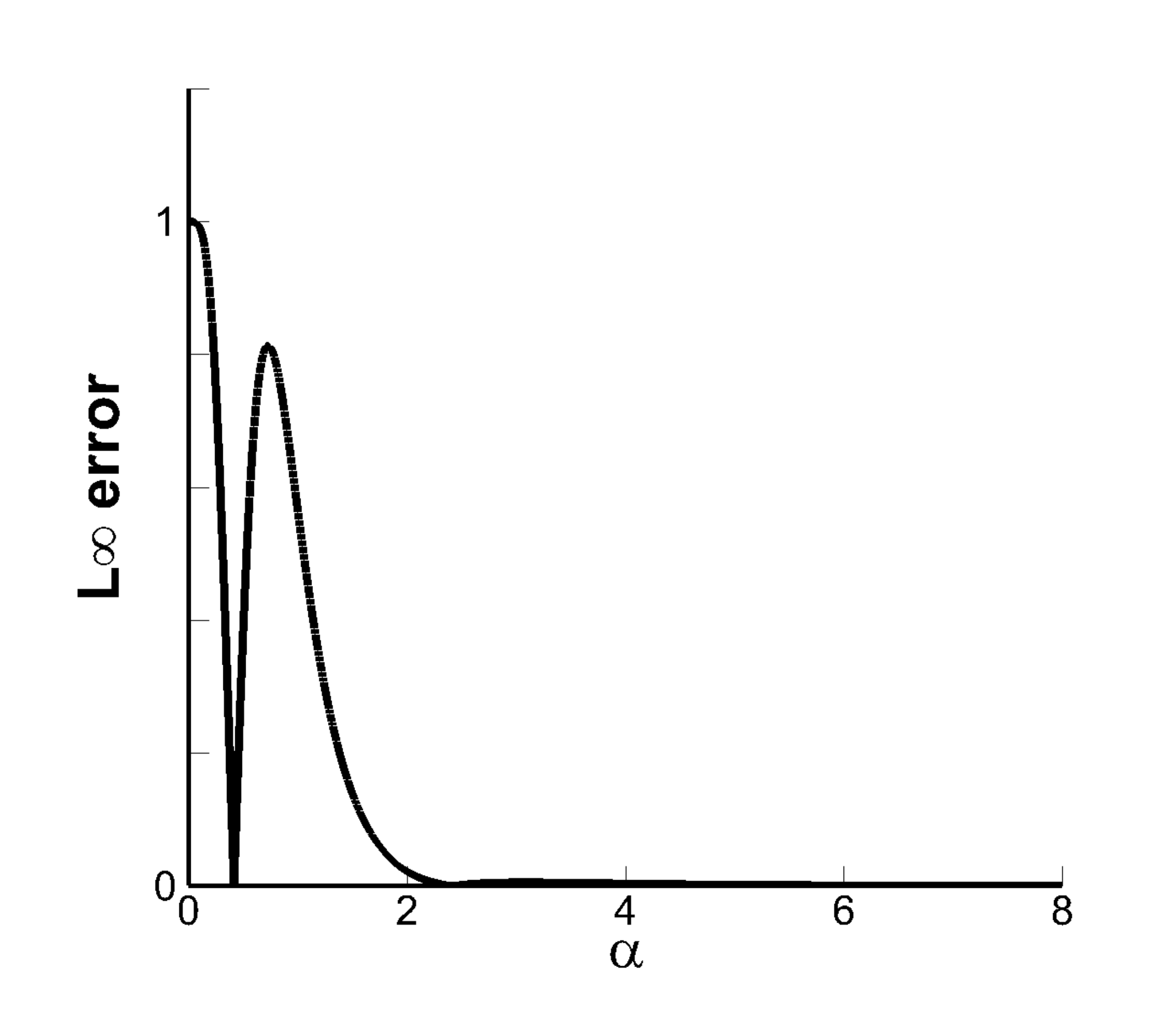}}
	\subfigure[$k=3$]{\includegraphics[width=.32\textwidth]{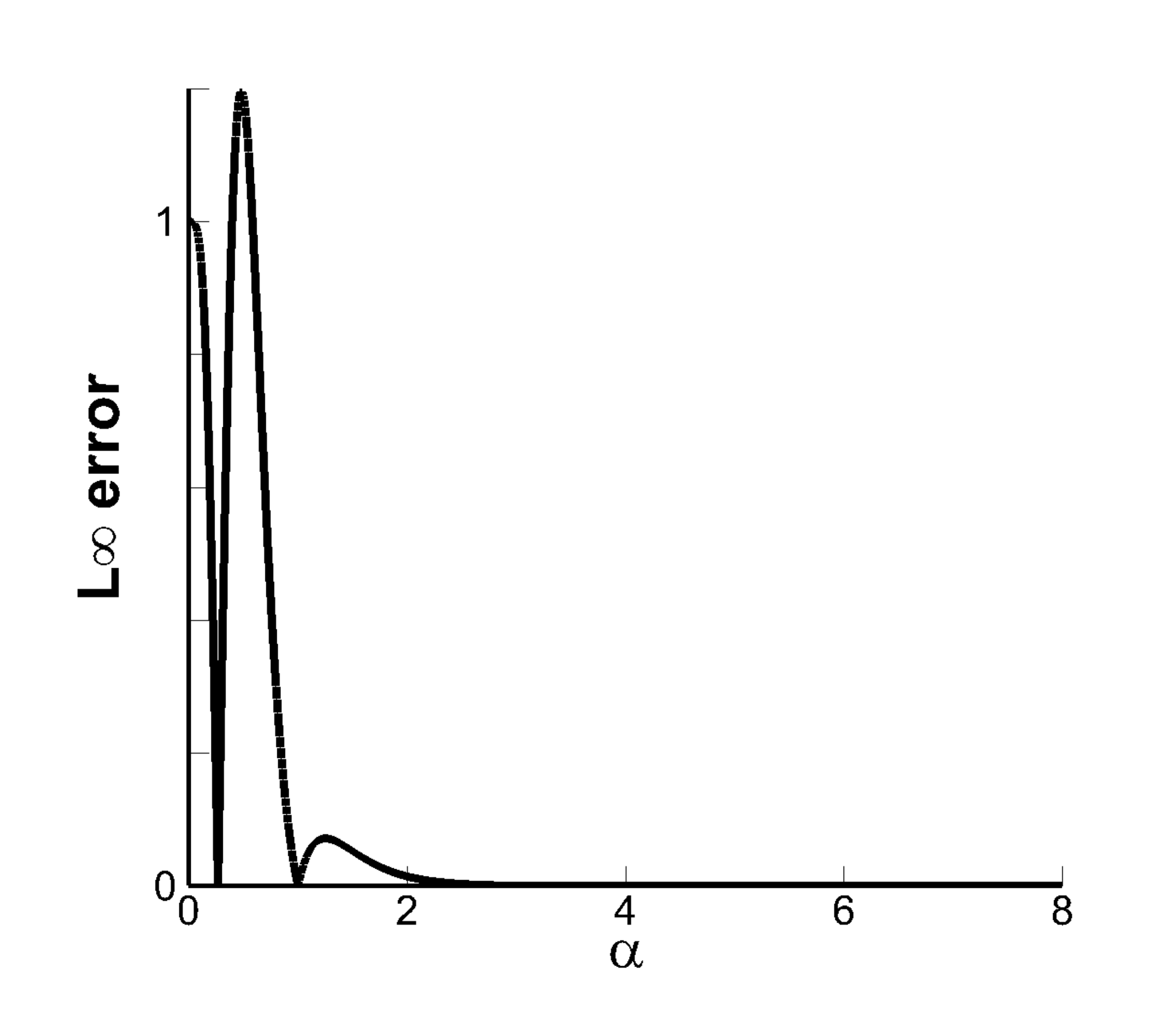}}
	\caption{$\Vert (\sin x)_{xx}-\mathcal{H}^k_2[\sin x,1,\alpha] \Vert_\infty$, with $\mathcal{H}^k_2$ given in \eqref{eq:method2}.}
\label{figerr2}
\end{figure}

\begin{figure}[htb]
	\centering
	\subfigure[$k=1$]{\includegraphics[width=.32\textwidth]{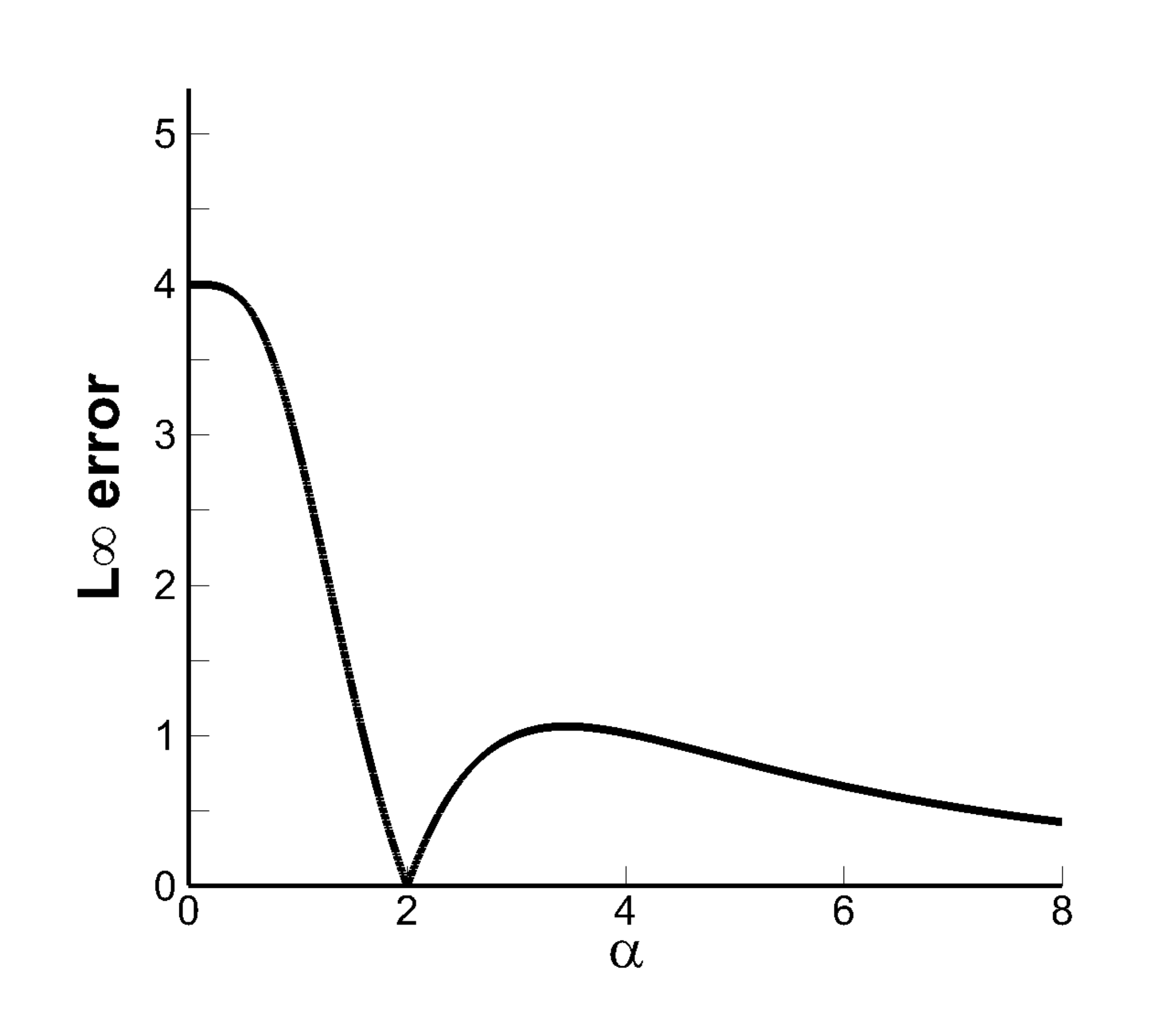}}
	\subfigure[$k=2$]{\includegraphics[width=.32\textwidth]{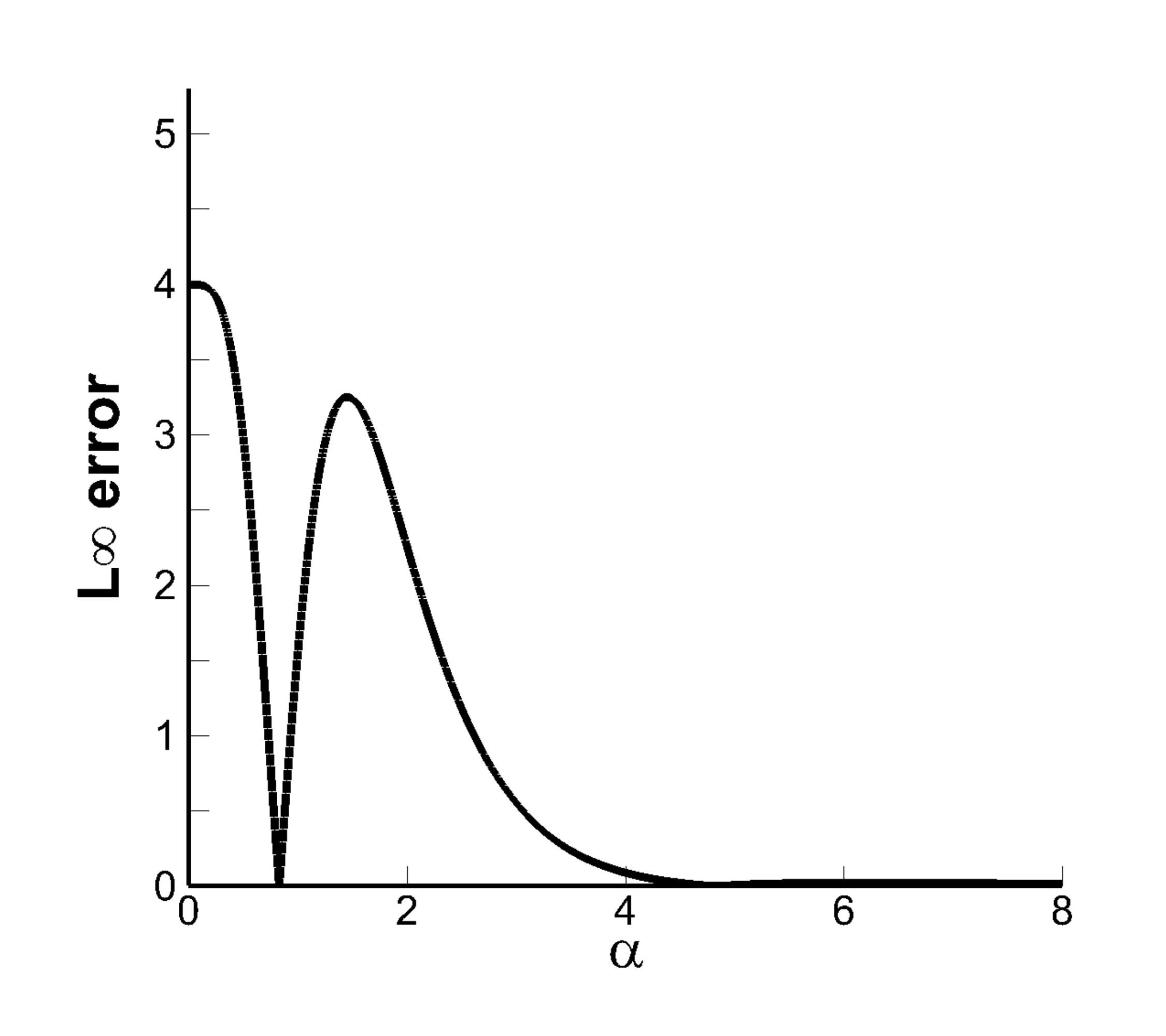}}
	\subfigure[$k=3$]{\includegraphics[width=.32\textwidth]{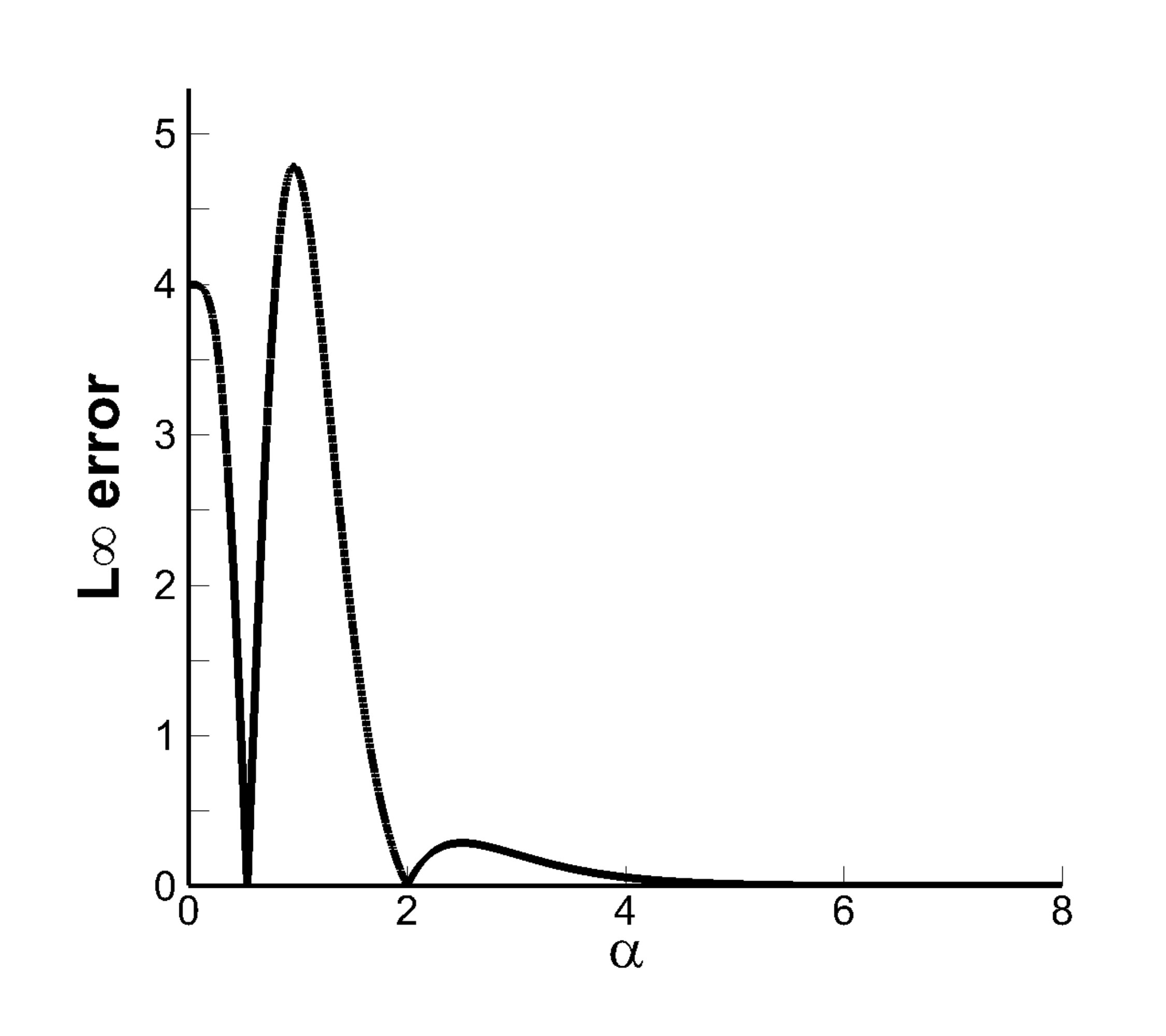}}
	\caption{$\Vert (\sin 2x)_{xx}-\mathcal{H}^k_2[\sin 2x,1,\alpha] \Vert_\infty$, with $\mathcal{H}^k_2$ given in \eqref{eq:method2}.}
	\label{figerr2ex}
\end{figure}
\section{New Representation of Differential Operators}
In this section, we will introduce a new representation of the first order differential operator $\partial_{x}$, and further use it in \eqref{eq:system2}. We require that the proposed scheme can maintain the high order accuracy and the A-stable property. Moreover, it can overcome the disadvantages of the schemes \eqref{eq:newmth1} and \eqref{eq:method2}, so that the error converges uniformly for $\alpha$, and the A-stable interval is relatively larger. 

\subsection{Construction of A New Representation}

We have showed in last section that the scheme \eqref{eq:method2} with $2k=2$ has a truncation error $\mO(1/\alpha^2)$. In fact, based on Theorem \ref{thm1}, we can obtain that
\begin{align}
\label{eq:dxx1}
	 \frac{\alpha}{2}(\mathcal{D}_L-\mathcal{D}_R)[v,\alpha](x) 
	=& \sum\limits_{p=1}^m\left(\frac{1}{\alpha}\right)^{2p-2}\partial_x^{2p-1}v(x) - \frac{1}{2}\left(\frac{1}{\alpha}\right)^{2m}(\mathcal{L}_L^{-1}-\mathcal{L}_R^{-1})[\partial_x^{2m+1}v,\alpha](x) \notag \\
	=& \partial_{x} v(x) + \left(\frac{1}{\alpha}\right)^{2}\partial_x^{3}v(x) - \frac{1}{2}\left(\frac{1}{\alpha}\right)^{4}(\mathcal{L}_L^{-1}-\mathcal{L}_R^{-1})[\partial_x^{5}v,\alpha](x).
\end{align}
This demonstrates that the operator $\frac{\alpha}{2}(\mathcal{D}_L-\mathcal{D}_R)$,  i.e., \eqref{eq:method2} with $2k=1$, approximates the first order derivative with error $\mO(1/\alpha^2)$ as well. Therefore, we can achieve the same order of accuracy with less computational cost. If we define $\mH^1_3[u,A,\alpha](x)$ as 
\begin{align}
	\mH^1_3[u,A,\alpha](x) =\frac{\alpha}{2}(\mathcal{D}_L-\mathcal{D}_R)[Aw,\alpha](x), 
	\quad \text{and} \quad
	w=\frac{\alpha}{2}(\mathcal{D}_L-\mathcal{D}_R)[u,\alpha](x). 
\end{align}
Then, the error $\Vert (\sin x)_{xx}-\mathcal{H}^1_3[\sin x,1,\alpha] \Vert_\infty$ is a monotone decreasing function of $\alpha$,  see Figure \ref{fig:err3_k1}.
Hence, we will start from \eqref{eq:dxx1} and construct new higher order approximations of $\partial_x$, by ``removing" the higher order derivatives. 
%For instance, the main error term in \eqref{eq:dxx1} is $(1/\alpha)^2 \partial_x^3 v(x)$. 

To eliminate the main error term in \eqref{eq:dxx1}, i.e., $(1/\alpha)^2 \partial_x^3 v(x)$, we introduce another operation $\mD_0$ here,
 \begin{align}
 \label{eq:D0}
	\mathcal{D}_0[v,\alpha](x) 
	:=& \frac{1}{2} \left( \mD_{L} + \mD_{R} \right)[v,\alpha](x) = (\mI -\mL_{0}^{-1})[v,\alpha](x) \notag\\
%	=& v(x) - \frac{\alpha}{2} \int_a^b e^{-\alpha |x-y|}v(y)dy - A_0 e^{-\alpha(x-a)} - B_0 e^{-\alpha(b-x)} \notag \\
	=& -\sum\limits_{p=1}^k \left(\frac{1}{\alpha}\right)^{2p} \partial_x^{2p}v(x) -\left( \frac{1}{\alpha} \right)^{2k+2} \mathcal{L}_0^{-1}[\partial_x^{2k+2}v,\alpha](x),
 \end{align} 
where, 
 \begin{align*}
 	\mL_{0}^{-1}[v,\alpha](x) =  \frac{\alpha}{2} \int_a^b e^{-\alpha |x-y|}v(y)dy + A_0 e^{-\alpha(x-a)} + B_0 e^{-\alpha(b-x)}
 \end{align*}
 and the coefficients $A_0=\frac{I^0[v,\alpha](b)}{1-\mu}$ and $B_0=\frac{I^0[v,\alpha](a)}{1-\mu}$ for periodic boundary conditions. The last equality in \eqref{eq:D0} is given in \cite{christlieb2017kernel}.
 Consequently, we can easily prove that
\begin{align*}
	 \mathcal{D}_0 \left[ \frac{\alpha}{2}(\mathcal{D}_L-\mathcal{D}_R) \right] + \frac{\alpha}{2}(\mathcal{D}_L-\mathcal{D}_R) 
	 = \frac{\alpha}{2} \left( \mI + \mD_{0} \right) \left( \mD_{L} -\mD_{R} \right) 
	 = \partial_x - \left(\frac{1}{\alpha}\right)^4\partial_x^5 + \mathcal{O} \left( \frac{1}{\alpha^6} \right),
\end{align*} 
reducing error from $\mO(1/\alpha^2)$ to $\mO(1/\alpha^4)$.
Repeat this process to eliminate the higher order derivatives in turn, and we can have a general form of the scheme with error $\mO(1/\alpha^{2k})$, which is showed in the following theorem.

\begin{thm}
\label{th:errd0dldr}
	Suppose $v(x)\in C^{2k+1}([a,b])$ is a periodic function. Consider the operators $\mD_{*}$ with the periodic boundary treatment $\mD_{*}(a) = \mD_{*}(b)$, where $*$ can be 0, L or R. Then, we have
	\begin{align}
		\Vert \partial_xv(x) - \frac{\alpha}{2} \sum\limits_{p=1}^k\mathcal{D}_0^{p-1} (\mathcal{D}_L-\mathcal{D}_R)[v,\alpha](x) \Vert_\infty \leq C \left( \frac{1}{\alpha} \right)^{2k} \Vert\partial_x^{2k+1}v\Vert_\infty,
	\end{align}
	where $C$ is a constant only depending on k.
\end{thm}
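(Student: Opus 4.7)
The plan is to recast the proposed partial sum as the truncation of a finite geometric operator series and then bound the tail via a contraction property of $\mathcal{L}_0^{-1}$ on $L^\infty$. On smooth periodic functions the operators $\partial_x$, $\mathcal{L}_L$, $\mathcal{L}_R$, $\mathcal{L}_0$ and their inverses are all polynomial expressions in $\partial_x$, so they commute pairwise; I will use this commutativity freely.

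First I would record three algebraic identities. (i) The factorization $\mathcal{L}_0 = \mathcal{L}_L \mathcal{L}_R$, which follows by direct expansion of $(\mathcal{I}+\alpha^{-1}\partial_x)(\mathcal{I}-\alpha^{-1}\partial_x)$, so that $\mathcal{L}_0^{-1} = \mathcal{L}_L^{-1}\mathcal{L}_R^{-1}$. (ii) The key identity $\tfrac{\alpha}{2}(\mathcal{D}_L-\mathcal{D}_R) = \mathcal{L}_0^{-1}\partial_x$, obtained from $\mathcal{D}_L - \mathcal{D}_R = \mathcal{L}_R^{-1}-\mathcal{L}_L^{-1} = \mathcal{L}_0^{-1}(\mathcal{L}_L-\mathcal{L}_R) = \tfrac{2}{\alpha}\mathcal{L}_0^{-1}\partial_x$. (iii) The relation $\mathcal{D}_0 = -\alpha^{-2}\mathcal{L}_0^{-1}\partial_x^2$, from $\mathcal{D}_0 = \mathcal{I}-\mathcal{L}_0^{-1} = \mathcal{L}_0^{-1}(\mathcal{L}_0-\mathcal{I})$.

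Next, the finite geometric identity
\begin{equation*}
\mathcal{L}_0 = (\mathcal{I}-\mathcal{D}_0)^{-1} = \sum_{p=0}^{k-1}\mathcal{D}_0^p + (\mathcal{I}-\mathcal{D}_0)^{-1}\mathcal{D}_0^k
\end{equation*}
applied to $\mathcal{L}_0^{-1}\partial_x v = \tfrac{\alpha}{2}(\mathcal{D}_L-\mathcal{D}_R)v$ turns the left-hand side into $\partial_x v$ and the right-hand side into $\tfrac{\alpha}{2}\sum_{p=1}^{k}\mathcal{D}_0^{p-1}(\mathcal{D}_L-\mathcal{D}_R)v$ plus a remainder term. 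By commutativity and $k$-fold use of identity (iii), the remainder collapses to $\mathcal{D}_0^{k}\partial_x v = (-1)^{k}\alpha^{-2k}\mathcal{L}_0^{-k}[\partial_x^{2k+1}v]$, giving an exact integral representation of the error in the statement of the theorem.

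The last step is to show that $\mathcal{L}_0^{-1}$ is a contraction on periodic $L^\infty$. Its kernel, given by $\tfrac{\alpha}{2}e^{-\alpha|x-y|}$ plus the boundary corrections from \eqref{eq:D0}, is nonnegative, and since $\mathcal{L}_0[1]=1$ one has $\mathcal{L}_0^{-1}[1]=1$, which forces the kernel to integrate to $1$ in $y$ for each fixed $x$. Hence $\|\mathcal{L}_0^{-1}v\|_\infty\leq\|v\|_\infty$, and iterating $k$ times yields $\|\mathcal{L}_0^{-k}[\partial_x^{2k+1}v]\|_\infty\leq\|\partial_x^{2k+1}v\|_\infty$, which proves the bound (in fact with $C=1$). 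The main delicate point will be verifying that the periodic boundary corrections $A_0 e^{-\alpha(x-a)}+B_0 e^{-\alpha(b-x)}$ indeed leave the kernel pointwise nonnegative and preserve the normalization $\mathcal{L}_0^{-1}[1]=1$; alternatively, one could replace this kernel argument by the maximum principle applied to $u-\alpha^{-2}u''=f$ on periodic functions, which gives the same contraction.
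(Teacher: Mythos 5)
Your proposal is correct, and it takes a genuinely different route from the paper's. The paper first proves the commutation relations $\partial_x\mathcal{D}_L=\mathcal{D}_L\partial_x$ (and likewise for $\mathcal{D}_R$, $\mathcal{D}_0$) by explicit integration by parts on the convolution formulas, and then runs a term-tracking induction: it expands $\mathcal{D}_0$ applied to each odd derivative, computes $\frac{1}{2}\mathcal{D}_0(\mathcal{D}_L-\mathcal{D}_R)$, follows the coefficients of $\partial_x^{2p-1}v$ through repeated application of this step, and arrives at an exact remainder of size $\alpha^{-(2k+1)}$ built from compositions of $\mathcal{L}_{*}^{-1}$ acting on $\partial_x^{2k+1}v$, which it finally bounds using $\|\mathcal{L}_{*}^{-1}[w,\alpha]\|_\infty\leq\bar{C}\|w\|_\infty$. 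You compress all of that bookkeeping into three operator identities --- $\mathcal{L}_0=\mathcal{L}_L\mathcal{L}_R$, $\frac{\alpha}{2}(\mathcal{D}_L-\mathcal{D}_R)=\mathcal{L}_0^{-1}\partial_x$, $\mathcal{D}_0=-\alpha^{-2}\mathcal{L}_0^{-1}\partial_x^{2}$ --- plus the finite geometric series $(\mathcal{I}-\mathcal{D}_0)^{-1}=\sum_{p=0}^{k-1}\mathcal{D}_0^{p}+(\mathcal{I}-\mathcal{D}_0)^{-1}\mathcal{D}_0^{k}$, which yields in one stroke the exact error representation $\partial_x v-\frac{\alpha}{2}\sum_{p=1}^{k}\mathcal{D}_0^{p-1}(\mathcal{D}_L-\mathcal{D}_R)[v,\alpha]=\mathcal{D}_0^{k}\partial_x v=(-1)^{k}\alpha^{-2k}(\mathcal{L}_0^{-1})^{k}[\partial_x^{2k+1}v,\alpha]$; this identity checks out against the Fourier symbols ($\mathcal{D}_0\to z^2/(1+z^2)$, $\frac{\alpha}{2}(\mathcal{D}_L-\mathcal{D}_R)\to i\eta/(1+z^2)$ with $z=\eta/\alpha$). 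Your closing step is also sharper than the paper's: nonnegativity and normalization of the periodic kernel of $\mathcal{L}_0^{-1}$ do hold (for $v\equiv1$ the boundary constants are $A_0=B_0=\tfrac12$, so $\mathcal{L}_0^{-1}[1]=1$, and all three kernel terms are positive since $1-\mu>0$), so you obtain the estimate with the explicit constant $C=1$, where the paper settles for an unspecified $C(k)$.

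One caution. Your stated justification for commutativity --- that the inverse operators are ``polynomial expressions in $\partial_x$'' --- is not right as written: $\mathcal{L}_L^{-1}$, $\mathcal{L}_R^{-1}$, $\mathcal{L}_0^{-1}$ are integral operators, not polynomials in $\partial_x$. The commutativity you use (that $\partial_x$ commutes with each $\mathcal{L}_{*}^{-1}$ on smooth periodic functions, from which $\mathcal{L}_0^{-1}=\mathcal{L}_L^{-1}\mathcal{L}_R^{-1}$ and the collapse of the remainder both follow) is true, but it is precisely the lemma to which the paper devotes the first half of its proof. It needs an argument tied to the periodic boundary treatment: either the paper's integration-by-parts computation, or, more simply, differentiate the defining equation $\mathcal{L}_{*}u=v$ and invoke uniqueness of periodic solutions. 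With that lemma supplied, every step of your argument goes through.
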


\begin{proof}
Using the definition of $\mL^{-1}_{L}$, it is easy to deduce that
\begin{align*}
	\partial_x\mathcal{L}^{-1}_L[v,\alpha](x)
%	&=\alpha\partial_x\int_a^xv(x) e^{-\alpha(x-y)}dy+\partial_x\left( e^{-\alpha(x-a)} A_L[v,\alpha]\right)\\
	&=\alpha v(x)-\alpha^2\int_a^x v(y) e^{-\alpha(x-y)}dy-\alpha e^{-\alpha(x-a)} A_L[v,\alpha]\\
	&=\alpha v(x)-\alpha\mathcal{L}^{-1}_L[v,\alpha](x)=\alpha\mathcal{D}_L[v,\alpha](x).
\end{align*}
Hence,
\begin{align*}
	\partial_x\mathcal{D}_L[v,\alpha](x) %=\partial_x v(x)-\partial_x\mathcal{L}_L^{-1}[v,\alpha](x)
	=\partial_x v(x)-\alpha\mathcal{D}_L[v,\alpha](x).
\end{align*}
On the other hand, using integration by parts, we can obtain that 
\begin{align*}
	\mathcal{D}_L[v,\alpha](x)
	&= v(x) - \alpha \int_{a}^{x} e^{-\alpha(x-y)} v (y) dy - \frac{I^L[v,\alpha](b)}{1-\mu} e^{-\alpha(x-a)} \\
	&= v(x) - e^{-\alpha(x-y)} v(y) |_{y=a} ^{y=x} + \int_{a}^{x} e^{-\alpha(x-y)} v'(y) dy -\frac{v(b)-\mu v(a) -\frac{1}{\alpha} I^{L}[v',\alpha](b)}{1-\mu}e^{-\alpha(x-a)} \\
	&=\frac{1}{\alpha}\mathcal{L}^{-1}_L[\partial_x v,\alpha](x)
=\frac{1}{\alpha}\partial_xv(x)-\frac{1}{\alpha}\mathcal{D}_L[\partial_xv,\alpha](x),
\end{align*}
%\wkp{
	or equivalently,
\begin{align*}
\mathcal{D}_L[\partial_x v,\alpha]=\partial_x v(x)-\alpha \mathcal{D}_L[v,\alpha](x).
\end{align*}
%}
Therefore, 
$$\partial_x\mathcal{D}_L[v,\alpha]=\mathcal{D}_L[\partial_x v,\alpha].$$
%Similar results hold for $\mathcal{D}_R$ and $\mathcal{D}_0$ as well.
%{\color{red} 
	Similarly, $\partial_x\mathcal{D}_R[v,\alpha]=\mathcal{D}_R[\partial_x v,\alpha]$ and $\partial_{xx}\mathcal{D}_0[v,\alpha]=\mathcal{D}_0[\partial_{xx} v,\alpha]$. 
%}

%In particular,
%\begin{align*}
%%\partial_x^k\mathcal{D}_L[v,\alpha](x)&=-\frac{1}{\alpha}\mathcal{L}_L^{-1}[\partial_x^{k+1}v,\alpha](x),\\
%%\partial_x^k\mathcal{D}_R[v,\alpha](x)&=-\frac{1}{\alpha}\mathcal{L}_R^{-1}[\partial_x^{k+1}v,\alpha](x),\\
%	\partial_x^{2k-1}\mathcal{D}_0[v,\alpha](x) 
%	= \mathcal{D}_0[\partial_x^{2k-1}v,\alpha](x)  
%	= -\frac{1}{\alpha^2}\mathcal{L}_0^{-1}[\partial_x^{2k+1}v,\alpha](x).
%\end{align*}

Next, let we consider operator $\frac{1}{2}\mathcal{D}_0(\mathcal{D}_L-\mathcal{D}_R)$. Note that there is a general form of \eqref{eq:D0} for $0\leq p<k$:
\begin{align*}
%\partial_x^p\mathcal{D}_L[v,\alpha](x)&=-\sum\limits_{m=p+1}^{k}(-\frac{1}{\alpha})^{(m-p)}\partial_x^mv(x)+(-\frac{1}{\alpha})^{k+1-p}\mathcal{L}^{-1}_L[\partial_x^{k+1}v,\alpha](x),\\
%\partial_x^p\mathcal{D}_R[v,\alpha](x)&=-\sum\limits_{m=p+1}^k(\frac{1}{\alpha})^{(m-p)}\partial_x^mv(x)-(\frac{1}{\alpha})^{k+1-p}\mathcal{L}^{-1}_R[\partial_x^{k+1}v,\alpha](x),\\
\mathcal{D}_0[\partial_x^{2p-1}v,\alpha](x)&=-\sum\limits_{m=p+1}^{k}\left(\frac{1}{\alpha}\right)^{2(m-p)}\partial_x^{2m-1}v(x)-\left(\frac{1}{\alpha}\right)^{2(k-p+1)}\mathcal{L}_0^{-1}[\partial_x^{2k+1}v,\alpha](x).
\end{align*}
Hence, we can obtain that
\begin{align*}
\frac{1}{2}\mathcal{D}_0(\mathcal{D}_L-\mathcal{D}_R)=&\sum\limits_{p=1}^{k-1}\left(\frac{1}{\alpha}\right)^{2p-1}\mathcal{D}_0[\partial_x^{2p-1}v,\alpha](x)-\frac{1}{2}\left(\frac{1}{\alpha}\right)^{2k-1}\mathcal{D}_0[(\mathcal{L}_L^{-1}-\mathcal{L}_R^{-1})[\partial_x^{2k-1}v,\alpha],\alpha](x)\\
=&\sum\limits_{p=1}^{k-1}\left(\frac{1}{\alpha}\right)^{2p-1}\left(-\sum\limits_{m=p+1}^{k}\left(\frac{1}{\alpha}\right)^{2(m-p)}\partial_x^{2m-1}v(x)-\left(\frac{1}{\alpha}\right)^{2(k-p+1)}\mathcal{L}_0^{-1}[\partial_x^{2k+1}v,\alpha](x)\right)\\
&+\frac{1}{2}\left(\frac{1}{\alpha}\right)^{2k+1}\mathcal{L}_0^{-1}(\mathcal{L}_L^{-1}-\mathcal{L}_R^{-1})[\partial_x^{2k+1}v,\alpha](x)\\
=-&\sum\limits_{p=2}^k(p-1)\left(\frac{1}{\alpha}\right)^{2p-1}\partial_x^{2p-1}v(x)-\frac{k-1}{2}\left(\frac{1}{\alpha}\right)^{2k+1}(\mathcal{L}_L^{-1}-\mathcal{L}_R^{-1})[\partial_x^{2k+1}v,\alpha](x)\\
&+\frac{1}{2}\left(\frac{1}{\alpha}\right)^{2k+1}\mathcal{L}_0^{-1}(\mathcal{L}_L^{-1}-\mathcal{L}_R^{-1})[\partial_x^{2k+1}v,\alpha](x).
\end{align*}
Therefore,
$$
\frac{1}{2}(\mathcal{D}_L-\mathcal{D}_R)+\frac{1}{2}\mathcal{D}_0(\mathcal{D}_L-\mathcal{D}_R)=\frac{1}{\alpha}v_x(x)-\sum\limits_{p=3}^k(p-2)\left(\frac{1}{\alpha}\right)^{2p-1}\partial^{2p-1}_xv(x)-\left(\frac{1}{\alpha}\right)^{2k+1}Q_2[v,\alpha](x),
$$
where $Q_2[v,\alpha](x)=\frac{k-1}{2}(\mathcal{L}_L^{-1}-\mathcal{L}_R^{-1})[\partial_x^{2k-1}v,\alpha](x)-\frac{1}{2}\mathcal{L}_0^{-1}(\mathcal{L}_L^{-1}-\mathcal{L}_R^{-1})[\partial_x^{2k+1}v,\alpha](x)]$.
Repeat the process, and we finally obtain 
$$
\frac{1}{2}\sum\limits_{p=1}^k\mathcal{D}_0^{p-1}(\mathcal{D}_L-\mathcal{D}_R)[v,\alpha](x)=\frac{1}{\alpha}\partial_xv(x)+\frac{(-1)^k}{2}\left(\frac{1}{\alpha}\right)^{2k+1}(\mathcal{L}_0^{-1})^{k-1}(\mathcal{L}_L^{-1}-\mathcal{L}_R^{-1})[\partial_x^{2k+1}v,\alpha](x).
$$ 
%{\color{red} check the form of the last term?}
Note that for any $w(x)\in C([a,b])$, we can find a constant $\bar{C}$ independent of $w$ and $\alpha$, such that $\|\mathcal{L}_{*}^{-1}[w,\alpha](x)\|_\infty\leq \bar{C}\|w\|_\infty$, where $*$ can be $0$, $L$ and $R$.
%$$
%\|\mathcal{L}_0^{-1}[w,\alpha](x)\|_\infty\leq C_0\|w\|_\infty,
%\|\mathcal{L}_L^{-1}[w,\alpha](x)\|_\infty\leq C_L\|w\|_\infty,
%\|\mathcal{L}_R^{-1}[w,\alpha](x)\|_\infty\leq C_R\|w\|_\infty,
%$$ 
Hence, there is a constant $C$ that only depends on $k$ satisfying
$$
\left \Vert \frac{1}{\alpha} \partial_xv(x) - \frac{1}{2} \sum\limits_{p=1}^k \mathcal{D}_0^{p-1}(\mathcal{D}_L-\mathcal{D}_R)[v,\alpha](x) \right\Vert_\infty \leq  C \left( \frac{1}{\alpha} \right)^{2k+1}\Vert\partial_x^{2k+1}v\Vert_\infty.
$$
And the theorem is proved. 
\end{proof}

Then we get the novel approximation of $(A(u,x) u_{x})_{x}$,
\begin{equation}
\label{eq:1D}
\begin{aligned}
	& (A(u,x)u_x)_x \approx \frac{\alpha}{2} \sum\limits_{p=1}^k\mathcal{D}_0^{p-1} (\mathcal{D}_L-\mathcal{D}_R)[Aw,\alpha](x) =: \mathcal{H}_{3}^{k}[u,A, \alpha](x), \\
	& w=u_x \approx \frac{\alpha}{2} \sum\limits_{p=1}^k\mathcal{D}_0^{p-1} (\mathcal{D}_L-\mathcal{D}_R)[v,\alpha](x),
\end{aligned}
\end{equation}

\noindent
and the $L_\infty$ error has the order of $(1/\alpha)^{2k}$.
Again, we test this operation $\mathcal{H}_{3}^{k}$ with the specific case that $u=\sin x$, $A=1$ and $[a,b]=[0,2\pi]$, and concern about the monotonicity of the $L_\infty$ errors with respect to $\alpha$. Figure \ref{figerr3} indicates the monotonicity and uniform convergence of the novel scheme. 
And for the function $u=\sin(2x)$, we have the similar conclusion, which will not be presented any more.
%We are also concerned about the monotonicity of error with respect to $\alpha$, and  gives the figures of error curve. 

\begin{figure}[htb]
\centering
	\subfigure[$k=1$]{\includegraphics[width=.3\textwidth]{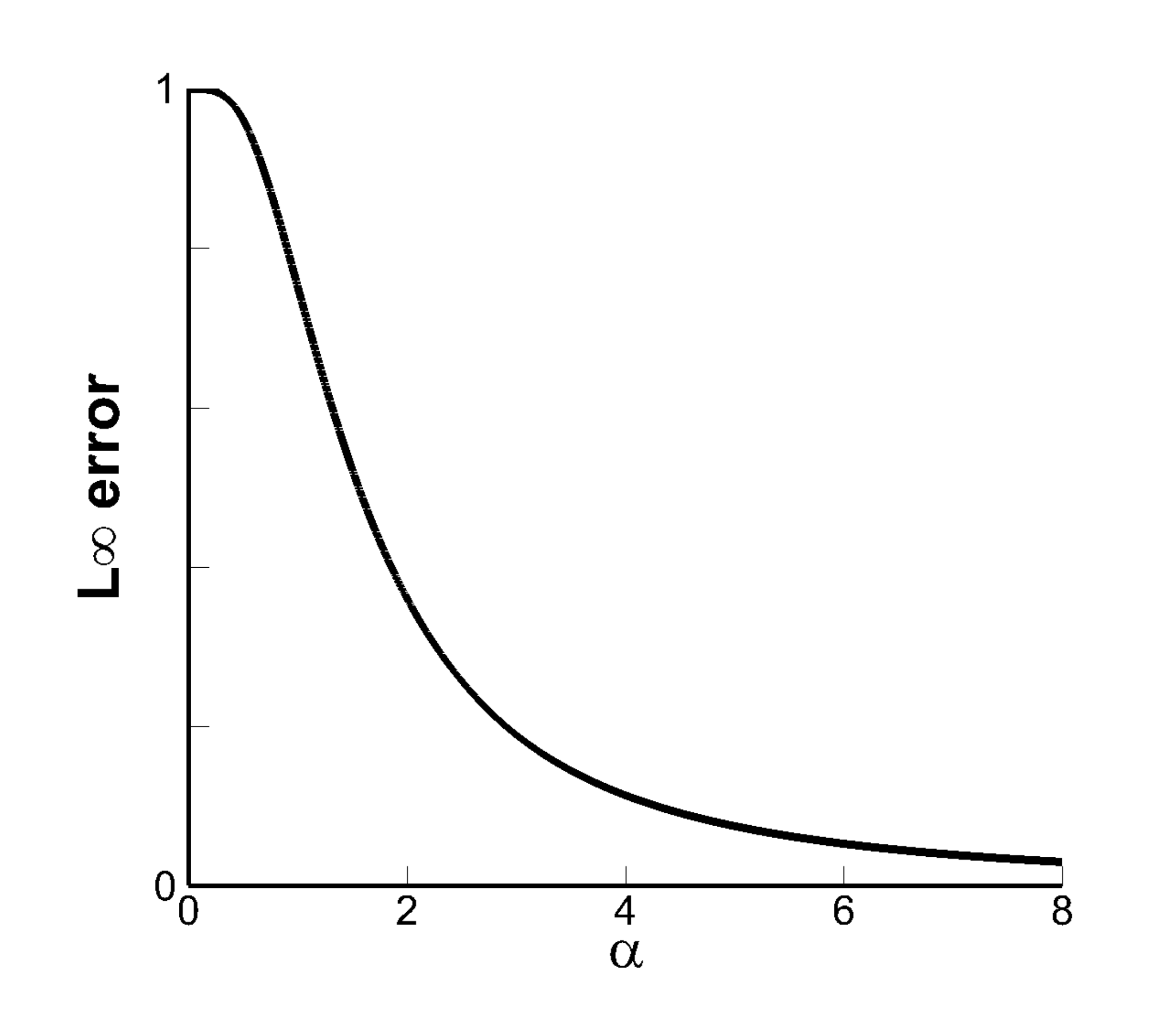}\label{fig:err3_k1}}
	\subfigure[$k=2$]{\includegraphics[width=.3\textwidth]{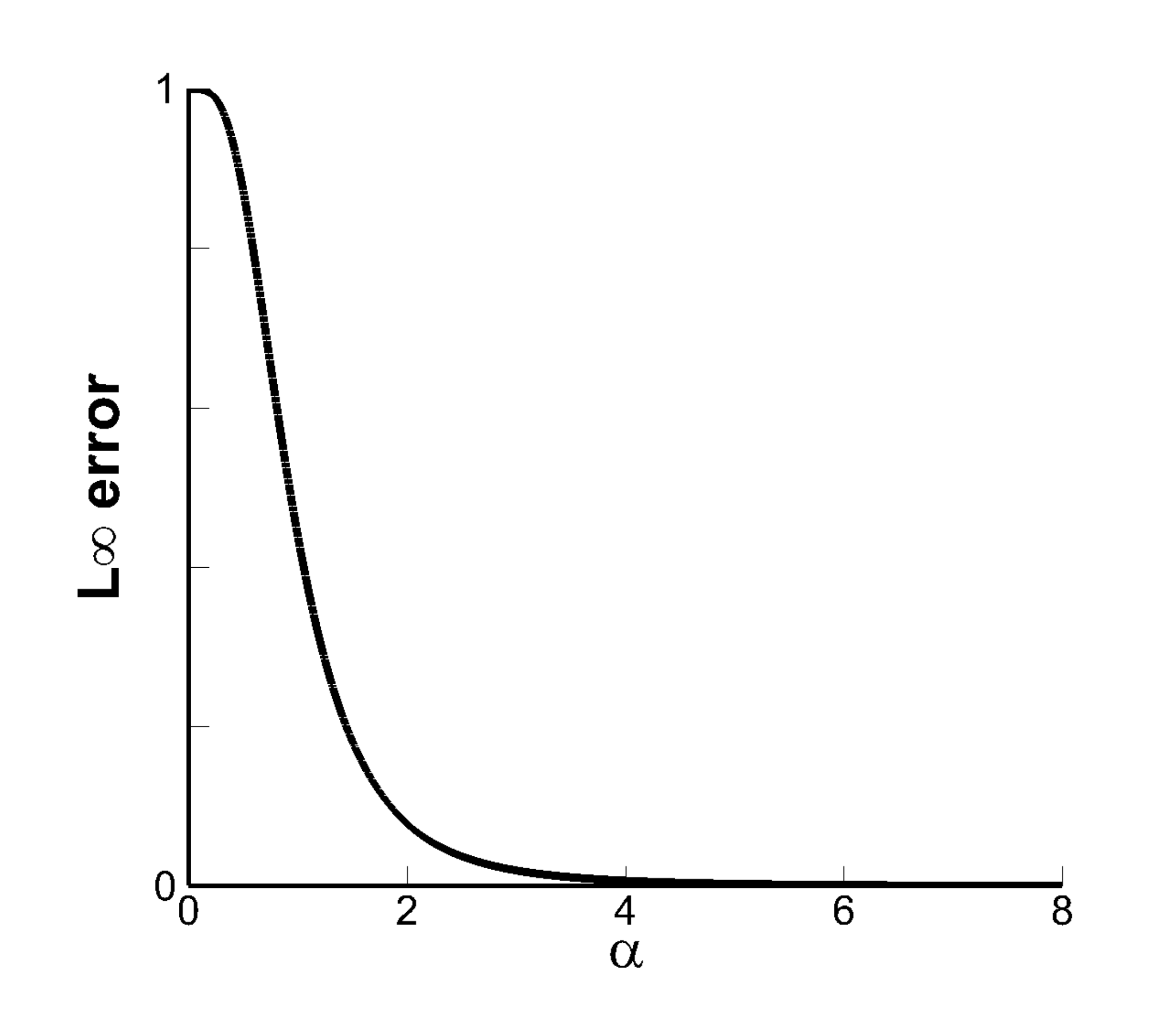}}
	\subfigure[$k=3$]{\includegraphics[width=.3\textwidth]{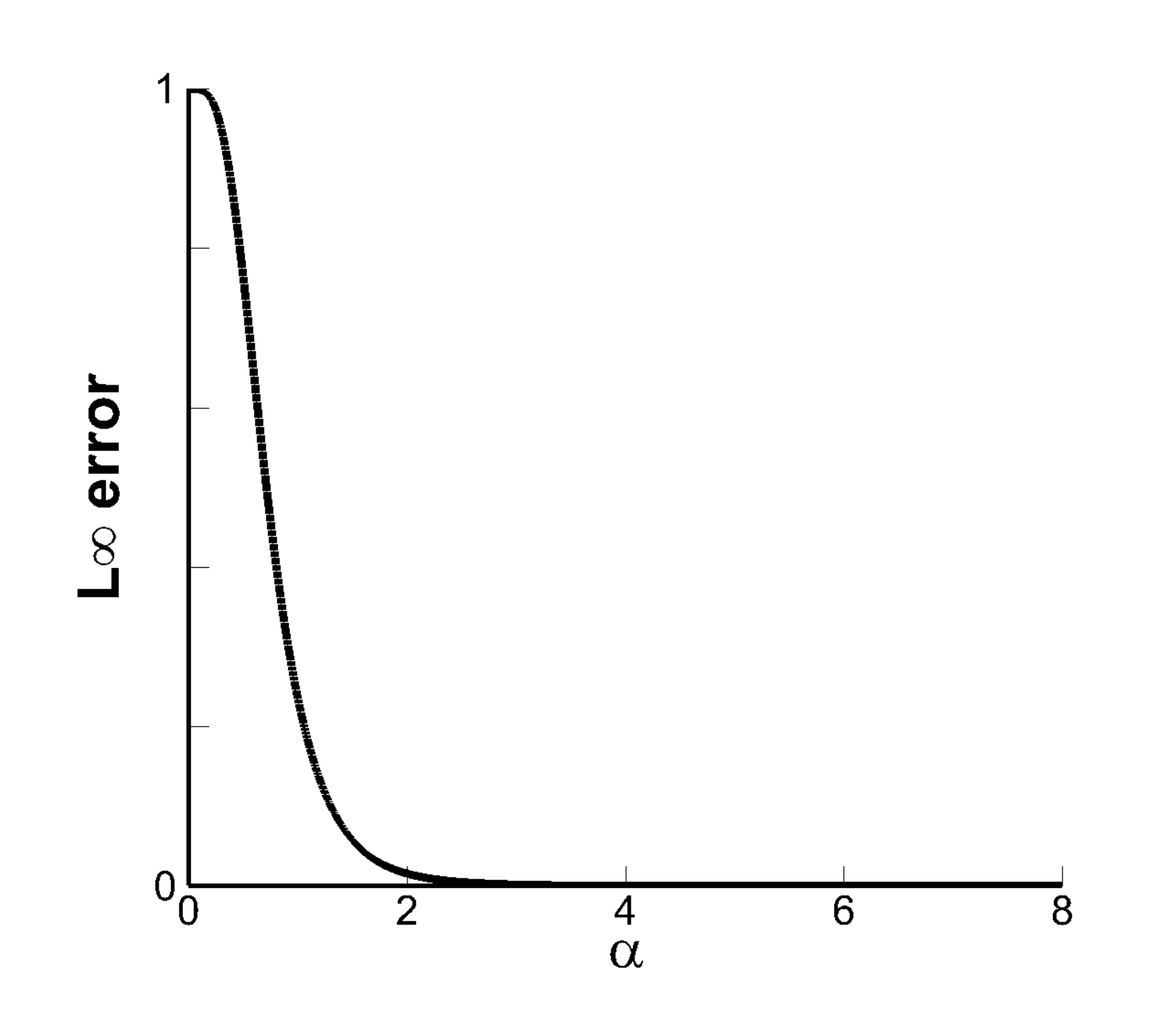}}
	\caption{$\Vert (\sin x)_{xx}-\mathcal{H}^k_3[\sin x,1,\alpha] \Vert_\infty$, with $\mathcal{H}^k_3$ given in \eqref{eq:1D}.}
%\begin{minipage}[c]{0.45\textwidth}
%\centering
%\includegraphics[width=2.25in]{figure/err3k1.eps}
%\caption*{k=1}
%\end{minipage}
%\begin{minipage}[c]{0.45\textwidth}
%\centering
%\includegraphics[width=2.25in]{figure/err3k2.eps}
%\caption*{k=2}
%\end{minipage}
%\begin{minipage}[c]{0.45\textwidth}
%\centering
%\includegraphics[width=2.25in]{figure/err3k3.eps}
%\caption*{k=3}
%\end{minipage}
%\caption{error curve of \eqref{eq:1D}}
\label{figerr3}
\end{figure}

On the other hand, we want to remark that computational complexity of $\mathcal{H}^k_3$ \eqref{eq:1D} is the same as $\mH^k_1$ \eqref{eq:newmth1}, and only half of that of $\mathcal{H}^k_2$ \eqref{eq:method2}, when they have the same order of accuracy. This is another advantage of this novel scheme. 
%{\color{red} check that!}

Therefore, in this work, we will employ $\mathcal{H}^k_3$ to approach the diffusion term. Moreover, we choose the parameter 
\begin{align}
	\alpha=\sqrt{ \frac{\beta}{ c\Delta t}}, \qquad \text{with} \quad c=\max_{u,x} |A(u,x,t)|.
\end{align}
Therefore, 
\begin{align}
\label{eq:H3}
	\mathcal{H}^k_3[u,A](x) 
	=& \frac{\beta}{4c\Delta t} \sum\limits_{p=1}^k\mathcal{D}_0^{p-1} (\mathcal{D}_L-\mathcal{D}_R) \left[ A   \sum\limits_{p=1}^k\mathcal{D}_0^{p-1} (\mathcal{D}_L-\mathcal{D}_R) [ u,\sqrt{\frac{\beta}{c\Delta t}} ], \sqrt{\frac{\beta}{c\Delta t}} \right](x) \notag\\
	=& (A(u,x) u_x)_x + \mO(\Delta t^k).
\end{align}

\subsection{Stability}

In this section, we will analyze the linear stability for the 1D equation \eqref{eq:equation2}, using $\mathcal{H}^k_3$ \eqref{eq:H3} for spatial derivative and the classic explicit SSP RK methods to advance $u^n$ to $u^{n+1}$. Even though the explicit method is used for time integration, we will show that the semi-discrete schemes can be A-stable and hence allowing for large time step evolution if $\beta$
in \eqref{eq:H3} is appropriately chosen.

To achieve $k$-th order accuracy in time, we should employ the $k$-th order SSP RK method as well as the $k$-th partial sum in $\mathcal{H}^k_3$. Note that high order SSP RK method \eqref{eq:rk2} and \eqref{eq:rk3} are linear combination of the first order Euler forward \eqref{eq:rk1}. Hence, here we only establish linear stability of the schemes $\mathcal{H}^k_3$ with first order Euler forward, which is given in the following theorem.

\begin{thm}
\label{th:stb1}
Consider the linear equation $u_t=A u_{xx}$, $A>0$, with periodic boundary conditions.
When the Euler forward time discretization coupling with the partial sum $\mathcal{H}^k_3$ in \eqref{eq:H3},
%$$
%u^{n+1}=u^n-\frac{\alpha^2}{4}\gamma\Delta t\left(\sum\limits_{p=1}^{k}\mathcal{D}_{0}^{p-1}(\mathcal{D}_L-\mathcal{D}_R)\right)^2[u^n,\alpha](x)
%$$
the scheme can be A-stable if $\beta$ satisfies $0<\beta \leq \beta_{2,k,max}$. Here, $\beta_{2,k,max}$ is a positive constant which only depends on $k$. 
The constant $\beta_{2,k,max}$ for $k = 1, 2, 3$ are summarized in Table \ref{tab1}.
\end{thm}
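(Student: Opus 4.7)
The plan is to carry out a von Neumann (Fourier) stability analysis, which is made tractable by the observation that on the periodic interval $[a,b]$ every building block of $\mathcal{H}_3^k$ acts on a Fourier mode $e^{i\xi x}$ by multiplication by a rational function of $\xi/\alpha$. From $\mathcal{L}_L=\mathcal{I}+\partial_x/\alpha$, $\mathcal{L}_R=\mathcal{I}-\partial_x/\alpha$ and $\mathcal{L}_0=\mathcal{I}-\partial_{xx}/\alpha^2$ I would first read off the basic symbols
\begin{equation*}
\mathcal{L}_L^{-1}\leftrightarrow \tfrac{\alpha}{\alpha+i\xi},\qquad \mathcal{L}_R^{-1}\leftrightarrow \tfrac{\alpha}{\alpha-i\xi},\qquad \mathcal{L}_0^{-1}\leftrightarrow \tfrac{\alpha^2}{\alpha^2+\xi^2},
\end{equation*}
so that, writing $s:=\xi^2/(\alpha^2+\xi^2)\in[0,1)$, one has $\mathcal{D}_0\leftrightarrow s$ and $\tfrac{\alpha}{2}(\mathcal{D}_L-\mathcal{D}_R)\leftrightarrow i\xi(1-s)$.

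Next, the geometric sum in the first-derivative approximation telescopes at the symbol level,
\begin{equation*}
\tfrac{\alpha}{2}\sum_{p=1}^{k}\mathcal{D}_0^{p-1}(\mathcal{D}_L-\mathcal{D}_R)\ \leftrightarrow\ i\xi(1-s)\sum_{p=0}^{k-1}s^{p}=i\xi(1-s^{k}).
\end{equation*}
Since for constant $A$ the scheme applies this operator once to produce $w$ and once more to $Aw$, the symbol of $\mathcal{H}_3^k$ is $-A\xi^2(1-s^k)^2$, and the Euler-forward amplification factor reads $g(\xi)=1-A\Delta t\,\xi^2(1-s^k)^2$. Substituting $\alpha^2=\beta/(A\Delta t)$ together with the identities $\xi^2=\alpha^2 s/(1-s)$ and $(1-s^k)^2=(1-s)^2\bigl(\sum_{j=0}^{k-1}s^j\bigr)^2$ makes $A$, $\Delta t$ and $\alpha$ drop out cleanly, leaving
\begin{equation*}
g=1-\beta\,\Phi_k(s),\qquad \Phi_k(s):=s(1-s)\Bigl(\sum_{j=0}^{k-1}s^j\Bigr)^{2}.
\end{equation*}

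A-stability requires $|g(\xi)|\le 1$ uniformly in $\xi$; since $\Phi_k\ge 0$ on $[0,1]$, the upper bound $g\le 1$ is automatic, and only $g\ge -1$ remains to be enforced, i.e., $\beta\,\max_{s\in[0,1]}\Phi_k(s)\le 2$. I would therefore define
\begin{equation*}
\beta_{2,k,\max}:=\frac{2}{\max_{s\in[0,1]}\Phi_k(s)},
\end{equation*}
and recover the entries of Table~\ref{tab1} by a one-variable maximization: $\Phi_1(s)=s(1-s)$ peaks at $s=\tfrac12$ with value $1/4$, so $\beta_{2,1,\max}=8$, and for $k=2,3$ the critical point is the root of a low-degree polynomial solved numerically. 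The genuinely non-trivial step is the telescoping that produces the clean factor $1-s^k$: it is what eliminates $\alpha$ from $g$ and concentrates the entire stability condition into a single scalar inequality. Everything else is bookkeeping, and the extension from Euler forward to the second- and third-order SSP RK methods in~\eqref{eq:rk2}--\eqref{eq:rk3} is automatic because those schemes are convex combinations of forward-Euler stages, so their amplification factors remain bounded by one whenever $|g|\le 1$.
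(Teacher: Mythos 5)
Your proof is correct and takes essentially the same route as the paper: a von Neumann analysis in which the geometric sum telescopes at the symbol level, giving the amplification factor $1-\beta S_k$ where the paper's $S_k(z)=z^2\bigl(1-\bigl(\tfrac{z^2}{1+z^2}\bigr)^k\bigr)^2$, $z=\xi/\alpha$, is exactly your $\Phi_k(s)$ after the substitution $s=\xi^2/(\alpha^2+\xi^2)$. The only (cosmetic) difference is the final step: the paper bounds $S_k$ by splitting it into two monotone factors to establish a finite maximum $M_k$ and sets $\beta_{2,k,\max}=2/M_k$, whereas you maximize the polynomial $s(1-s)\bigl(\sum_{j=0}^{k-1}s^j\bigr)^2$ directly on $[0,1]$, which cleanly reproduces the table values (e.g. $\beta_{2,1,\max}=8$ exactly).
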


\begin{proof} 
Suppose the solution is enough smooth, then
$$
u(x,t)=\hat{u}(t) e^{i\eta x}.
$$
We can obtain the amplification factor $\hat{Q}$ using Von Neumann analysis.
Plugging the above formula in the definition of $\mathcal{L}_L$ and $\mathcal{D}_L$, we have
\begin{align*}
	\mathcal{L}_L[u,\alpha] =(\mathcal{I}+\frac{1}{\alpha}\partial_x)u
	=(1+\frac{i\eta}{\alpha})u,\quad \text{and} \quad
	\mathcal{D}_L[u,\alpha]=(\mathcal{I}-\mathcal{L}^{-1}_L)[u,\alpha]%=(1-\frac{1}{1+\frac{i\eta}{\alpha}})u
	=\frac{\frac{i\eta}{\alpha}}{1+\frac{i\eta}{\alpha}}u.
\end{align*}
For brevity, let $z=\frac{\eta}{\alpha}$, and then we have $\mathcal{D}_L[u,\alpha]=\frac{iz}{1+iz}u$.
%$$
%\mathcal{D}_R[u,\alpha]
%%=(\mathcal{I}-\mathcal{L}^{-1}_R)[u,\alpha]=(1-\frac{1}{1-\frac{i\eta}{\alpha}})u
%=-\frac{\frac{i\eta}{\alpha}}{1-\frac{i\eta}{\alpha}}u.
%$$
Similarly, we can obtain that
\begin{align*}
 	\mathcal{D}_R[u,\alpha]=-\frac{iz}{1-iz}u, \qquad
 	\frac{1}{2}(\mathcal{D}_L-\mathcal{D}_R)[u,\alpha]=\frac{iz}{1+z^2}u,\qquad 
 	\mathcal{D}_0[u,\alpha] %= \frac{1}{2}(\mathcal{D}_L+\mathcal{D}_R)[u,\alpha] 
 	= \frac{z^2}{1+z^2}u.
\end{align*}
Note that the scheme is given as 
\begin{align*}
	u^{n+1}=u^n + \Delta t A \frac{\alpha^2}{4} \left( \sum\limits_{p=1}^{k}\mathcal{D}_{0}^{p-1} (\mathcal{D}_L-\mathcal{D}_R) \right)^2[u^n,\alpha](x).
\end{align*}
Hence, with the sum formula of infinite %decreasing geometric 
sequence, the amplification factor can be written as
\begin{align*}
	\hat{Q}
	&= 1+ A \alpha^2\Delta t \left( \sum\limits_{p=1}^{k} \left(\frac{z^2}{1+z^2}\right)^{p-1} \frac{iz}{1+z^2} \right)^2\\
%	&= 1+A\alpha^2\Delta t \left( \sum\limits_{p=1}^{k} \frac{iz^{2p-1}}{(1+z^2)^p} \right)^2\\
	&=1+\beta \left(iz\left(1-\left(\frac{z^2}{1+z^2}\right)^k\right)\right)^2\\
	&=1-\beta z^2\left(1-\left(\frac{z^2}{1+z^2}\right)^k\right)^2.
\end{align*}
Define $S_k(z)=z^2\left(1-\left(\frac{z^2}{1+z^2}\right)^k\right)^2$, so that $\hat{Q}=1-\beta S_k(z)$. The scheme is stable when $\vert\hat{Q}\vert\leq 1$, which means $\beta S_k(z) \leq 2$ for any $z\in\mathbb{R}$.
 
It is easy to find that $S_k(z)$ is an even function with respect to $z$. So we only need to consider $z\geq0$. We divide $S_k$ into two terms, $z^2\left(1-\left(\frac{z^2}{1+z^2}\right)^k\right)$ and $\left(1-\left(\frac{z^2}{1+z^2}\right)^k\right)$. Then, study the monotonicity or upper bound of those two factors, respectively.
Note that
\begin{align*}
z^2\left(1-\left(\frac{z^2}{1+z^2}\right)^k\right)=\sum\limits_{p=1}^k\left(\frac{z^2}{1+z^2}\right)^p,
\end{align*}
and
$$
\frac{{\rm d}}{{\rm d}z}\left(\frac{z^2}{1+z^2}\right)^p=\frac{2pz}{(1+z^2)^2}\left(\frac{z^2}{1+z^2}\right)^{p-1}\geq 0, \quad \text{for} \, z\geq 0.
$$
Hence, we have that $z^2\left(1-\left(\frac{z^2}{1+z^2}\right)^k\right)$ %$S_k(z)$ %
is nonnegative and monotonous increasing of $z$, and the upper bound is 
$$
\lim\limits_{z\rightarrow+\infty}\sum\limits_{p=1}^k\left(\frac{z^2}{1+z^2}\right)^p=\sum\limits_{p=1}^k\lim\limits_{z\rightarrow+\infty}\left(\frac{z^2}{1+z^2}\right)^p=k.
$$
On the other hand, it is obviously that $1-\left(\frac{z^2}{1+z^2}\right)^k$ is monotone decreasing of $z$ and tending to zero. Therefore we have $0\leq S_k\leq M_k$ where $M_k$ is a positive constant which only depends on $k$. Consequently, the scheme is A-stable if $\beta\leq\frac{2}{M_k}=:\beta_{2,k,max}$.
% and we can take $\beta_{2,k,max}=\frac{2}{M_k}$.
%So 
%\begin{align*}
%\hat{Q}&\leq 1\\
%\hat{Q}&\geqslant 1-\beta_k M_k
%\end{align*}
%Just let $\beta_k=\frac{2}{M_k}$ to satisfy $\vert\hat{Q}\vert\leq 1$, namely, the scheme is stable.
\end{proof}

\begin{table}[htb]
	\caption{\label{tab1} $\beta_{2,k,max}$ in Theorem \ref{th:stb1} for $k=1,2,3$}
	\bigskip
	\centering
	\begin{tabular}{cccc}\hline
	$k$	&	1	&	2	&	3	\\\hline
	$\beta_{2,k,max}$	&	8	&	3.2275	&	1.9800	\\\hline
	\end{tabular}
\end{table}

\begin{rem}
	We want to remark that in \cite{christlieb2017kernel}, we found that the second order derivative $\partial_{xx}$ can be represented by the infinity series of $\mD_{0}$, and the heat equation $u_t=A u_{xx}$ can be simulated as
	\begin{align}
	\label{eq:uxx_old}
		A u_{xx} = - \left(\sqrt{\frac{\beta}{A\Delta t}}\right)^2 \sum_{p=1}^{k}  \mD_{0}^p [Au, \sqrt{\frac{\beta}{A\Delta t}}] + \mathcal{O}(\Delta t^k).
	\end{align}
	It was proved that the scheme is A-stable when employing the $k$-th order SSP RK method and the $k$-th partial sum with $\beta$ in a given interval. However, the upper bounds of $\beta$ are much smaller than those in Table \ref{tab1}. For instance, when $k=1$, $\beta_{\max}=2$, which is only quarter of that of the new scheme. Hence, the proposed scheme has smaller errors and is more efficient. The comparison will be showed in numerical simulation.
\end{rem}

\begin{rem}
	Consider the parabolic problem $u_t=(A(u,x,t) u_{x})_{x}+ B(u,x,t) u_{x}$, where $A(x,t)>0$. Then, we approximate the spatial derivatives by scheme 
%	\wkp{
	\begin{align}
	\label{eq:H}
		\mH^{k}[u](x) 
		=&  \frac{\alpha_{0}^2}{4} \sum\limits_{p=1}^k\mathcal{D}_0^{p-1} (\mathcal{D}_L-\mathcal{D}_R) \left[ A(u,x,t)  \sum\limits_{q=1}^k\mathcal{D}_0^{q-1} (\mathcal{D}_L-\mathcal{D}_R) [ u,\alpha_0 ],\alpha_0 \right](x) \notag\\
		& + \frac{1}{2}B(u,x,t) \left( u^{-}_x + u_x^{+} \right) + \frac{1}{2}r \left( u^{+}_x - u_x^{-} \right),  
	\end{align}
%	}
	with $u^{\pm}_{x}$ are given in \eqref{eq:ux_pn} and the parameters
%\wkp{
	\begin{equation}
	\begin{aligned}
	& \alpha_{L}=\alpha_{R}= \frac{\beta_1}{r\Delta t}, \quad r=\max|B(u,x,t)|, \\
	& \alpha_{0}=\sqrt{\frac{\beta_2}{c\Delta t}}, \quad  c=\max |A(u,x,t)|.
	\end{aligned}
	\end{equation}
	Then, $\mH^{k}[u](x) =(A(u,x,t) u_{x})_{x}+ B(u,x,t) u_{x} +\mO(\Delta t^k)$. 
	Moreover, consider the linear function, where $A$ and $B$ are both constants. The scheme is A-stable if we employs the $k$-th order SSP RK method and the $k$-th partial sum with $\beta_1\leq \frac{1}{2} \beta_{1,k,\max}$ and $ \beta_2\leq\frac{1}{2}\beta_{2,k,\max}$, for $k = 1, 2, 3$. 
%}
\end{rem}

\subsection{Space Discretization}
In the previous sections, we always consider the partial sum with exact integration. Here, we present the details about the spatial discretization of $\mathcal{H}^k[u]$ in \eqref{eq:H}.
% The proposed algorithm is based on a high order WENO MOL$^T$  schemes for transport problems[\ref{}]. 
Suppose the domain $[a,b]$ is divided by $N+1$ uniformly distributed grid points
$$a=x_0<x_1<\cdots<x_{N-1}<x_N=b,$$
with mesh size $\Delta x=\frac{b-a}{N}$. Denote $u^n_i$ as the numerical solution at spatial location $x_{i}$ at time level $t^{n}$. 
On each grid point $x_{i}$, we further denote $L^{*}[v,\alpha](x_{i})$ as $L^{*}_{i}$, where $*$ can be $0$, $L$ and $R$. Note that the convolution integrals $I^{L}_{i}$ and $I^{R}_{i}$ satisfy a recursive relation 
\begin{equation}
	\label{eq:recursive}
	\begin{aligned}
	& I^L_i = I^L_{i-1}e^{-\alpha\Delta x} + J^L_i,\quad i=1,\ldots,N, \quad I^L_0 = 0, \\
	& I^R_i = I^R_{i+1}e^{-\alpha\Delta x} + J^R_i,\quad i=0,\ldots,N-1, \quad I^R_N = 0,
	\end{aligned}
\end{equation}
respectively, where
\begin{align}
\label{eq:JLR}
J^L_{i} =  \alpha \int_{x_{i-1}}^{x_{i}} v(y)e^{-\alpha (x_{i}-y)}dy,\ \ \ \
J^R_{i} =  \alpha \int_{x_{i}}^{x_{i+1}} v(y)e^{-\alpha (y-x_{i})}dy.
\end{align}
Therefore, once we have computed $J^{L}_{i}$ and $J^{R}_{i}$ for all $i$, we then can obtain $I^{L}_{i}$ and $I^{R}_{i}$ via the recursive relation.
%We would like to remark that the computational complexity  $\mathcal{O}(N^2)$ to $\mathcal{O}(N)$. 
In addition, note that the convolution integral $I^{0}[v,\alpha](x)$ can be split into $I^{L}[v,\alpha](x)$ and $I^{R}[v,\alpha](x)$,
$$ I^0_i =\frac{1}{2}( I^L_i + I^R_i ), \quad i=0,\ldots,N.$$
Thus, $I^0_i$ can be evaluated in the same way as $I^L_i$ and $I^R_i $. 

For the parabolic equation, the solution is smooth in space. Hence, here we only need the quadrature based on the interpolation. We take the quadrature rule for $J^{L}_{i}$ with six points as an example. Suppose $p(x)$ is the unique polynomial of degree at most five which interpolates $v$ at $\{x_{i-3}, \ldots, x_{i+2}\}$. Then,
\begin{subequations}
\label{eq:quar}
\begin{align}
	\label{eq:quar1}
	J^{L}_{i} \approx \alpha \int_{x_{i-1}}^{x_{i}} p(y) e^{-\alpha(x_i-y)} dy 
	=\sum\limits_{j=0}^{5} C_{-3+j} v_{i-3+j},
\end{align}
where the coefficients $C_{-3+j}$ depend on $\alpha$ and the cell size $\Delta x$, but not on $v$. These coefficients would be given out in Appendix \ref{appen1}.
The process to obtain $J^{R}_{i}$ is mirror-symmetric to that of $J^{L}_{i}$ with respect to point $x_{i}$
\begin{align} \label{eq:quar2}
J^{R}_{i} \approx \sum\limits_{j=0}^{5} C_{-3+j} v_{i+3-j},
\end{align}
\end{subequations}  

We want to remark that when the solution has discontinuities or sharp fronts, for instance, the solution of degenerate parabolic equations,
the weighted essentially non-oscillatory (WENO) integration and a nonlinear filter can be used to control the numerical oscillation near shock and achieve high order accuracy in smooth regions. Details can be found in \cite{christlieb2017kernel, christlieb2019kernel}.

%{\color{red} 
	Consider the fully discrete scheme \eqref{eq:H} with $k$-th order SSP RK scheme and the quadrature rule \eqref{eq:quar}, the linear stability property can be obtained by the Fourier analysis under the assumption that $u^n_j=\hat{u}^n e^{i\kappa x_j}$. Again, we only consider the linear diffusion equation $u_t=u_{xx}$, since the analysis for linear advection equation $u_t=u_x$ is given in \cite{christlieb2017kernel}. 
It is straightforward to check that the amplification factor $\lambda$ for the linear diffusion equation depends on $\beta$, $\kappa\Delta x$ and $\Delta t/\Delta x^2$. 
Moreover, we can verify that, if $0<\beta\leq \beta_{2,k,max}$, for $k = 1, 2, 3$, then $|\lambda|\leq1$ for any $\kappa \Delta x\in[0, 2\pi]$, $\Delta t$ and $\Delta x$, indicating the fully discrete scheme is unconditionally stable.
%}

%We here use 5th interpolation polynomial to approach $u$ which interpolation points are $x_{i-3},\ x_{i-2},\ \cdots\ and\ x_{i+2}$, then use the exact integral formula,
%$$
%J^L_i=\sum\limits_{j=0}^{5}C_{i-3+j} v_{i-3+j},	
%$$
%where the coefficients $C_{i-3+j}$ depend on $\alpha$ and the cell size $\Delta x$, but not on $v$. These coefficients would be given out in appendix.
\section{Two-dimensional Implementation}

In this section, we will consider the two-dimensional problem
\begin{align}
\label{eq:2D}
	u_t =& (A_{11}(u,x,y,t)u_x)_x + (A_{22}(u,x,y,y)u_y)_y + (A_{12}(u,x,y,t)u_x)_y + (A_{21}(u,x,y,t)u_y)_x \notag\\
	& + B_{1}(u,x,y,t) u_x + B_{2}(u,x,y,t) u_y + C(u,x,y,t).
\end{align}
Let $(x_i, y_j)$ be the node of a 2D orthogonal grid. Here, we use uniform grid in each direction, with mesh sizes $\Delta x=x_{i}-x_{i-1}$ and $\Delta y=y_{j}-y_{j-1}$.
Each terms in \eqref{eq:2D} can be directly approximated by the proposed 1D formulation in a dimension-by-dimension fashion, namely, approximating $\partial_{x}$ for fixed $y_j$ and approximating $\partial_{y}$ for fixed $x_i$. More specifically, for the transport parts,
\begin{equation}
\begin{aligned}
%\label{eq:ux}
	& B_1 u_x|_{(x_i,y_j)} \approx \frac{1}{2}B_1(u,x,y,t) \left( u^{-}_x + u_x^{+} \right) + \frac{1}{2}r_{x} \left( u^{+}_x - u_x^{-} \right),  \\
%\label{eq:uy}
	& B_2 u_y|_{(x_i,y_j)}  \approx \frac{1}{2}B_2(u,x,y,t) \left( u^{-}_y + u_y^{+} \right) + \frac{1}{2}r_{y} \left( u^{+}_y - u_y^{-} \right),  
\end{aligned}
\end{equation}
where, 
%\wkp{
\begin{align*}
	&	u^{-}_x|_{(x_i,y_j)} = \alpha_{L,x} \sum_{p=1}^{k} \mathcal{D}_{L,x}^{p} [ u(\cdot, y_j), \alpha_{L,x}] (x_i), \quad 
	u^{+}_x|_{(x_i,y_j)} = -\alpha_{R,x} \sum_{p=1}^{k} \mathcal{D}_{R,x}^{p} [ u(\cdot,y_j),\alpha_{R,x}] (x_i), \\
	& u^{-}_y|_{(x_i,y_j)} = \alpha_{L,y} \sum_{p=1}^{k} \mathcal{D}_{L,y}^{p} [ u(x_i, \cdot), \alpha_{L,y}] (y_j), \quad 
	u^{+}_y|_{(x_i,y_j)} = -\alpha_{R,y} \sum_{p=1}^{k} \mathcal{D}_{R,y}^{p} [ u(x_i, \cdot),\alpha_{R,y}] (y_j),
\end{align*}
%}
or with a modified term for $k = 3$, and parameters 
\begin{align*}
	\alpha_{L,x}=\alpha_{R,x}=\beta_1/(r_{x}\Delta t), \quad r_{x}=\max_{u,x,y} |B_1(u,x,y,t)| \\
	\alpha_{L,y}=\alpha_{R,y}=\beta_1/(r_{y}\Delta t), \quad r_{y}=\max_{u,x,y} |B_2(u,x,y,t)|.
\end{align*}
%$q_{x}=\max_{u,x,y} |B_1(u,x,y,t)|$, $q_{y}=\max_{u,x,y} |B_2(u,x,y,t)|$, $\alpha_{L,x}=\alpha_{R,x}=\beta/(q_{x}\Delta t)$ and $\alpha_{L,y}=\alpha_{R,y}=\beta/(q_{y}\Delta t)$. 
And for the diffusion terms,
\begin{equation}
\label{eq:uxx}
\begin{aligned}
%\label{eq:uxx}
	& (A_{11}u_x)_x\vert_{(x_i,y_j)} \approx \frac{\alpha_{0,x}}{2} \sum\limits_{p=1}^k \mathcal{D}_{0,x}^{p-1} (\mathcal{D}_{L,x}-\mathcal{D}_{R,x}) [A_{11}(u(\cdot,y_j,t),\cdot,y_j,t) \, w_1(\cdot, y_j,t), \alpha_{0,x}] (x_i),  \\
%\label{eq:uyy}
	& (A_{22} u_y)_y\vert_{(x_i,y_j)} \approx \frac{\alpha_{0,y}}{2} \sum\limits_{p=1}^k \mathcal{D}_{0,y}^{p-1} (\mathcal{D}_{L,y}-\mathcal{D}_{R,y}) [A_{22}(u(x_i,\cdot,t),x_i,\cdot,t) \, w_2(x_i,\cdot,t),\alpha_{0,y}](y_i), \\
%\label{eq:uxy}
	& (A_{12}u_x)_y\vert_{(x_i,y_j)} \approx \frac{\alpha_{0,y}}{2} \sum\limits_{p=1}^k \mathcal{D}_{0,y}^{p-1} (\mathcal{D}_{L,y}-\mathcal{D}_{R,y}) [A_{12}(u(x_i, \cdot,t),x_i, \cdot, t) \, w_1( x_i, \cdot, t), \alpha_{0,y}] (y_j),  \\
%\label{eq:uyx}
	& (A_{21}u_y)_x\vert_{(x_i,y_j)} \approx \frac{\alpha_{0,x}}{2} \sum\limits_{p=1}^k \mathcal{D}_{0,x}^{p-1} (\mathcal{D}_{L,x}-\mathcal{D}_{R,x}) [A_{21}(u(\cdot,y_j,t),\cdot,y_j,t) \, w_2(\cdot, y_j,t), \alpha_{0,x}] (x_i) , \\
%	& \left\{ \begin{array}{ll}
%	(A_{11}u_x)_x\vert_{(x_i,y_j)}
%	\approx \frac{\alpha_{0,x}}{2} \sum\limits_{p=1}^k \mathcal{D}_{0,x}^{p-1} (\mathcal{D}_{L,x}-\mathcal{D}_{R,x}) [A_{11}(u(\cdot,y_j,t),\cdot,y_j,t) \, w_1(\cdot, y_j,t), \alpha_{0,x}] (x_i) \\
%	w_1\vert_{(x_i,y_j)} = u_x\vert_{(x_i,y_j)} \approx \frac{\alpha_{0,x}}{2} \sum\limits_{p=1}^k \mathcal{D}_{0,x}^{p-1} (\mathcal{D}_{L,x}-\mathcal{D}_{R,x}) [ u(\cdot,y_j,t), \alpha_{0,x}](x_i) 
%	\end{array}\right. \\
%	(A_{11}u_x)_x\vert_{(x_i,y_j)}
%	\approx& \frac{\alpha_x^2}{4} \sum\limits_{p=1}^k \mathcal{D}_{0,x}^{p-1} (\mathcal{D}_{L,x}-\mathcal{D}_{R,x}) [A_{11}(u(\cdot,y_j,t),\cdot,y_j,t)\sum\limits_{q=1}^k\mathcal{D}_{0,x}^{q-1}(\mathcal{D}_{L,x}-\mathcal{D}_{R,x})[u(\cdot,y_j),\alpha_x],\alpha_x](x_i),\\
%\label{eq:uyy}
%	& \left\{ \begin{array}{ll}
%	(A_{22} u_y)_y\vert_{(x_i,y_j)} \approx \frac{\alpha_{0,y}}{2} \sum\limits_{p=1}^k \mathcal{D}_{0,y}^{p-1} (\mathcal{D}_{L,y}-\mathcal{D}_{R,y}) [A_{22}(u(x_i,\cdot,t),x_i,\cdot,t) \, w_2(x_i,\cdot),\alpha_{0,y}](y_i), \\
%	w_2\vert_{(x_i,y_j)} = u_y\vert_{(x_i,y_j)}  \approx \frac{\alpha_{0,y}}{2} \sum\limits_{q=1}^k \mathcal{D}_{0,y}^{q-1} (\mathcal{D}_{L,y}-\mathcal{D}_{R,y}) [u(x_i,\cdot),\alpha_{0,y}] (y_i).
%	\end{array}\right. 
%(g_2(u,x,y)u_y)_y\vert_{(x_i,y_j)}\approx&\frac{\alpha_y^2}{4}\sum\limits_{p=1}^k\mathcal{D}_{0,y}^{p-1}(\mathcal{D}_{L,y}-\mathcal{D}_{R,y})[g_2(u(x_i,\cdot),x_i,\cdot)\sum\limits_{q=1}^k\mathcal{D}_{0,y}^{q-1}(\mathcal{D}_{L,y}-\mathcal{D}_{R,y})[u(x_i,\cdot),\alpha_y],\alpha_y](y_i),
	& w_1\vert_{(x_i,y_j)} = u_x\vert_{(x_i,y_j)} \approx \frac{\alpha_{0,x}}{2} \sum\limits_{p=1}^k \mathcal{D}_{0,x}^{p-1} (\mathcal{D}_{L,x}-\mathcal{D}_{R,x}) [ u(\cdot,y_j,t), \alpha_{0,x}](x_i)  \\
	& w_2\vert_{(x_i,y_j)} = u_y\vert_{(x_i,y_j)}  \approx \frac{\alpha_{0,y}}{2} \sum\limits_{q=1}^k \mathcal{D}_{0,y}^{q-1} (\mathcal{D}_{L,y}-\mathcal{D}_{R,y}) [u(x_i,\cdot,t),\alpha_{0,y}] (y_i).
\end{aligned}
\end{equation}
where 
\begin{align*}
	& \alpha_{0,x}=\sqrt{\beta_2/(c_x\Delta t)}, \quad c_x=\max\limits_{u,x,y}\vert A_{11}(u,x,y)\vert,  \\
	& \alpha_{0,y}=\sqrt{\beta_2/(c_y\Delta t)}, \quad c_y=\max\limits_{u,x,y}\vert A_{22}(u,x,y)\vert.
\end{align*}
%$\alpha_{0,x}=\sqrt{\beta/(c_x\Delta t)}$ and $\alpha_{0,y}=\sqrt{\beta/(c_y\Delta t)}$ with $c_x=\max\limits_{u,x,y}\vert A_{11}(u,x,y)\vert$ and $c_y=\max\limits_{u,x,y}\vert A_{22}(u,x,y)\vert$.
%When $A_{12}=A_{21}=0$, it is easy to check that the scheme can achieve A-stable if we take $\beta$ as half of that for one-dimensional problems. 

%The treatment of $(g_3(u,x,y)u_x)_y$ and $(g_4(u,x,y)u_y)_x$ is symmetrical so that we only show the details of $(g_3(u,x,y)u_x)_y$.
%
%For $(g_3(u,x,y)u_x)_y$, we use $\frac{\alpha_*}{2}\sum\limits_{p=1}^k\mathcal{D}_{0,*}^{p-1}(\mathcal{D}_{L,*}-\mathcal{D}_{R,*})$ to approach $\partial_*$, where $*$ can be $x$ and $y$. $\alpha_x$ and $\alpha_y$ are the same as those in approximation of $(g_1(u,x,y)u_x)_x$ and $(g_2(u,x,y)u_y)_y$, respectively. Specifically,
%\begin{align}
%\label{eq:uxy}
%(g_3(u,x,y)u_x)_y\approx\frac{\alpha_x\alpha_y}{4}\sum\limits_{q=1}^{k}\mathcal{D}_{0,y}^{q-1}(\mathcal{D}_{L,y}-\mathcal{D}_{R,y})\left[g_3(u,x,y)\sum\limits_{p=1}^{k}\mathcal{D}_{0,x}^{p-1}(\mathcal{D}_{L,x}-\mathcal{D}_{R,x})[u,\alpha_x],\alpha_y\right]
%\end{align}
%Apparently from Theorem \ref{th:errd0dldr}, the error of the formulation above is $\mathcal{O}(\left(\frac{1}{\alpha_x}\right)^{2k+1}+\left(\frac{1}{\alpha_y}\right)^{2k+1})$.

Similarly, in the two-dimensional case, we can choose the parameters $\beta_1$ and $\beta_2$ carefully such that the scheme is A-stable. In particular, considering the diffusion terms only, we have the following theorem.

\begin{thm}
\label{th:erruxy}
	Consider the linear parabolic equation with periodic boundary
	\begin{align}
	u_t=A_{11} u_{xx}+ (A_{12}+A_{21}) u_{xy} + A_{22} u_{yy},  \quad (x,y)\in [0,2\pi]^2,
	\end{align}
	where the coefficients $A_{ij}$ are constants. Suppose the scheme is constructed by the partial sums \eqref{eq:uxx} with $k$ terms and combined with the Euler forward. 
	\begin{enumerate}
		\item If $A_{12}=A_{21}=0$, the scheme is A-stable when we take $0< \beta \leq \beta_{k,\max} = \frac{1}{2}\beta_{2,k\max}$.
		
		\item Otherwise, the scheme is A-stable if we take $0< \beta \leq \beta_{k,\max} = \frac{1}{4}\beta_{2,k\max}$.
	\end{enumerate} 
\end{thm}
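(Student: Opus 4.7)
The plan is to follow the von Neumann strategy used in the proof of Theorem \ref{th:stb1}, substituting the Fourier ansatz $u(x,y,t) = \hat{u}(t)\,e^{i(\eta_1 x + \eta_2 y)}$ into the semi-discrete scheme \eqref{eq:uxx} and computing the amplification factor in closed form. The key reusable ingredient from the 1D proof is that the operator $\tfrac{\alpha}{2}\sum_{p=1}^k \mathcal{D}_0^{p-1}(\mathcal{D}_L-\mathcal{D}_R)$ acts on the single-variable Fourier mode $e^{i\eta x}$ as multiplication by $i\alpha\,\phi_k(z)$, where $z=\eta/\alpha$ and $\phi_k(z) := z\bigl(1 - \bigl(\tfrac{z^2}{1+z^2}\bigr)^k\bigr)$, with $\phi_k(z)^2 = S_k(z) \leq M_k := 2/\beta_{2,k,\max}$.

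First I would apply the 2D spatial operators dimension by dimension to the Fourier mode. Since each diffusion term in \eqref{eq:uxx} involves composing two 1D kernel operators (the inner one to produce $w_i$, the outer one to differentiate $A_{ij}w_i$), the diagonal terms $(A_{11}u_x)_x$ and $(A_{22}u_y)_y$ contribute symbols $-A_{11}\alpha_{0,x}^2\phi_k(z_x)^2$ and $-A_{22}\alpha_{0,y}^2\phi_k(z_y)^2$ respectively, while the cross terms $(A_{12}u_x)_y$ and $(A_{21}u_y)_x$ each contribute $-A_{ij}\alpha_{0,x}\alpha_{0,y}\phi_k(z_x)\phi_k(z_y)$. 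Using $c_x = A_{11}$, $c_y = A_{22}$ (positivity from parabolicity) and the definitions of $\alpha_{0,x}$, $\alpha_{0,y}$, I get $A_{11}\alpha_{0,x}^2\Delta t = \beta$, $A_{22}\alpha_{0,y}^2\Delta t = \beta$, and $\alpha_{0,x}\alpha_{0,y}\Delta t = \beta/\sqrt{A_{11}A_{22}}$, so that forward Euler gives
\begin{equation*}
\hat{Q} \;=\; 1 \;-\; \beta\Bigl[\phi_k(z_x)^2 + \phi_k(z_y)^2 + \rho\,\phi_k(z_x)\phi_k(z_y)\Bigr], \qquad \rho := \frac{A_{12}+A_{21}}{\sqrt{A_{11}A_{22}}}.
\end{equation*}

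The parabolicity hypothesis on the symmetrized coefficient matrix forces positive definiteness of $\bigl(\begin{smallmatrix} A_{11} & (A_{12}+A_{21})/2 \\ (A_{12}+A_{21})/2 & A_{22}\end{smallmatrix}\bigr)$, which is equivalent to $|\rho|\leq 2$. In particular the bracketed quadratic form in $(\phi_k(z_x),\phi_k(z_y))$ is non-negative, so $\hat{Q}\leq 1$ automatically. The only thing left is the lower bound $\hat{Q}\geq -1$. Using $|\phi_k(z_x)\phi_k(z_y)|\leq \tfrac{1}{2}(\phi_k(z_x)^2 + \phi_k(z_y)^2)$ together with the 1D bound $\phi_k^2\leq M_k$, the bracketed quantity is at most $(1+|\rho|/2)(\phi_k(z_x)^2+\phi_k(z_y)^2) \leq (1+|\rho|/2)\cdot 2M_k$. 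In the decoupled case $\rho=0$ this gives $2M_k$, yielding A-stability for $\beta\leq 1/M_k = \beta_{2,k,\max}/2$; in general, with $|\rho|\leq 2$, the bound becomes $4M_k$, yielding A-stability for $\beta\leq 1/(2M_k) = \beta_{2,k,\max}/4$.

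The only real obstacle I expect is the translation of the scalar parabolicity condition into the sharp constant $|\rho|\leq 2$; this is what keeps the indefinite cross term from dominating the diagonal contributions. Everything else is a symbolic computation that piggybacks on the 1D analysis already performed in Theorem \ref{th:stb1}, and the two cases differ only through where one stops in the chain of elementary inequalities, matching exactly the factors $\tfrac{1}{2}$ and $\tfrac{1}{4}$ in the statement.
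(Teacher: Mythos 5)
Your proposal is correct and follows essentially the same route as the paper: a von Neumann analysis yielding the amplification factor $\hat{Q} = 1 - \beta\left[R_x^2 + R_y^2 + \rho\,R_x R_y\right]$ with $\rho = (A_{12}+A_{21})/\sqrt{A_{11}A_{22}}$, the parabolicity condition translated into $|\rho|\le 2$, and the elementary bound $|R_x R_y| \le \tfrac{1}{2}(R_x^2+R_y^2)$ (equivalent to the paper's use of $\max\left((R_x-R_y)^2,(R_x+R_y)^2\right) \le 2(R_x^2+R_y^2) \le 4M_k$) to obtain $\hat{Q}\ge -1$ under the stated restrictions on $\beta$. The only difference is cosmetic: the paper writes out only the general case ($\rho\neq 0$), whereas you also spell out the decoupled case, which follows by the same inequality chain stopped one step earlier.
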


\begin{proof}
	Here, we only give the proof of the second case, which is more general. Suppose $u$ is smooth enough that can be written as $u(x,y,t)=\hat{u}(t)e^{i\xi x+i\eta y}$. Similar to the proof of Theorem \ref{th:stb1}, we can use the Von Neumann analysis and obtain the amplification factor $\hat{Q}$ 
	\begin{align*}
	\hat{Q} 
	=& 1-A_{11} \alpha_{0,x}^2\Delta t \left(\sum\limits_{p=1}^{k}\frac{z_1^{2p-1}}{(1+z_1^2)^p}\right)^2
	-A_{22} \alpha_{0,y}^2 \Delta t \left(\sum\limits_{q=1}^{k}\frac{z_2^{2q-1}}{(1+z_2^2)^q}\right)^2  \\
	& - (A_{12}+A_{21}) \alpha_{0,x} \alpha_{0,y} \Delta t \left(\sum\limits_{p=1}^{k}\frac{z_1^{2p-1}}{(1+z_1^2)^p}\right)\left(\sum\limits_{q=1}^{k}\frac{z_2^{2q-1}}{(1+z_2^2)^q}\right),
	\end{align*}
	where $z_1=\xi/\alpha_{0,x}$ and $z_2=\eta/\alpha_{0,y}$.
	Let $R_x=\sum\limits_{p=1}^{k}\frac{z_1^{2p-1}}{(1+z_1^2)^p}$ and $R_y=\sum\limits_{q=1}^{k}\frac{z_2^{2q-1}}{(1+z_2^2)^q}$. 
	Note that the function is parabolic if there exists a constant $\theta>0$ such that
	$$ \begin{pmatrix}\xi_{1} & \xi_{2}\end{pmatrix} \begin{pmatrix} A_{11} & A_{12} \\ A_{21} & A_{22} \\\end{pmatrix}  \begin{pmatrix}\xi_{1} \\ \xi_{2}\end{pmatrix} \geq \theta (\xi_{1}^2 + \xi_2^2), \quad \forall (\xi_1, \xi_2). $$
	This means $A_{11}>0$, $A_{22}>0$ and $\frac{ |A_{12}+A_{21}| }{\sqrt{A_{11} A_{22}}} \leq 2$.
	Then, we have
	\begin{align*}
	\hat{Q}=& 1-A_{11}\alpha_{0,x}^2\Delta t R_x^2 -A_{22} \alpha_{0,y}^2\Delta t R_y^2 - (A_{12}+A_{21}) \alpha_{0,x} \alpha_{0,y} \Delta t R_x R_y\\
	=& 1 - \beta R_{x}^2 -\beta R_{y}^2 - \frac{A_{12}+A_{21}}{\sqrt{A_{11} A_{22}}} \beta R_x R_y
	\end{align*}

%We require $\vert c_2\vert\leq\sqrt{2c_1c_3}$ to make the equation hyperbolic, hence we have
	\noindent
	Note that $$\hat{Q}\leq 	1-\beta\min\left((R_x-R_y)^2,(R_x+R_y)^2\right)\leq 1.$$
	Hence, the scheme is A-stable if $ \hat{Q}\geq-1$.
%	$$
%	-1\leq 1-\beta\max\left((R_x-R_y)^2,(R_x+R_y)^2\right)\leq\hat{Q}\leq 	1-\beta\min\left((R_x-R_y)^2,(R_x+R_y)^2\right)\leq 1.
%	$$
%	
%	\noindent
%	We only need to prove the left inequality. 
	Since $R_x^2\leq M_k$ and $R_{y}^2\leq M_k$. We take $\beta\leq \frac{1}{4} \beta_{2,k,\max}$, then 
	\begin{align*}
		\hat{Q} \geq& 1-\beta\max\left((R_x-R_y)^2,(R_x+R_y)^2\right)
		 \geq 1- \beta (4 M_k) \\
		 \geq& 1-\frac{1}{4} \beta_{2,k,\max} 4 M_{k} = 1-\beta_{2,k,\max} M_{k} = -1.
	\end{align*}
	Namely, the scheme is A-stable.

%From $\beta=\frac{1}{4}\beta_k$ and the proof process of Theorem\ref{th:stb1}, we have 
%$$R_*^2=S_k\leq\frac{2}{\beta_k}=\frac{1}{2\beta},$$
%where $*$ can be $x$ and $y$. We can obtain that
%$$
%\hat{Q}\geqslant 1-\beta(2\sqrt{\frac{1}{2\beta}})^2=-1
%$$ 
%In conclusion, $\vert\hat{Q}\vert\leq 1$ regardless of $\Delta t$. Namely, the scheme is A-stable.
\end{proof}

\begin{rem}
	We want to remark that, the methods \eqref{eq:newmth1} and \eqref{eq:method2} can also be used to solve the 2D problem based on a dimension-by-dimension approach. However, we cannot prove the A-stable property of either method when $A_{12}\neq0$ or $A_{21}\neq0$. The provable A-stable property is one main advantage of this proposed scheme.
\end{rem}

\begin{rem}
%\begin{wkp}
	Consider the function \eqref{eq:2D} with both diffusion terms and transport terms. Suppose the scheme employs the $k$-th order SSP RK method and the $k$-th partial sum. Then, the scheme is unconditional stability if $0< \beta_1 \leq  \frac{1}{4} \beta_{1,k,\max}$ and $0< \beta_2\leq \frac{1}{8} \beta_{2,k,\max}$.
%\end{wkp}
\end{rem}
\section{Numerical Results}
In this section, we show the results of our numerical experiments for the schemes to demonstrate their efficiency and efficacy. 
We take the time step as
\begin{align*}
\Delta t=\text{CFL} \cdot \Delta x,
\end{align*}
for one-dimensional problems, and 
\begin{align*}
\Delta t= \text{CFL} \cdot  \min(\Delta x, \Delta y).
\end{align*}
for two-dimensional problems.
Note that time step $\Delta t$ is chosen in a form similar to a standard MOL type method. It will enable us to conveniently test accuracy and compare the scheme with other methods. We remark that the CFL number can be chosen arbitrarily large due to the unconditional stability. 
Becasue all solutions are smooth here, the six points quadrature formula \eqref{eq:quar} without WENO is used to compute $J^{L}_{i}$ and $J^{R}_{i}$. And we always choose $\beta=\beta_{k,\max}$ for each scheme in numerical simulations.

%\textbf{Example 1.} \label{ex1}
\begin{exa}\label{ex1}
Firstly, we consider the one-dimensional heat equation 
\begin{align}
	\left\{ \begin{array}{ll}
	u_t=u_{xx}, \quad 0\leq x\leq 2\pi, \\
	u(x,0)=\sin x,
	\end{array} \right. 
\end{align}
with the $2\pi$-periodic boundary condition. This problem has the exact solution is $u(x,t)=e^{-t}\sin x$.
Here, we want to compare the efficiency of the new proposed scheme (denoted as ``new") and the original scheme \eqref{eq:uxx_old} in \cite{christlieb2017kernel} (denoted as ``old"), that is
%\wkp{
\begin{align}
\label{equxxold}
u_{xx} \approx -\alpha^2 \sum_{p=0}^{k} \mD_{0}^{p}[u,\alpha], \quad \text{with} \quad \alpha=\sqrt{\beta_{old,k}/(c\Delta t)}.
\end{align}
$\beta_{k,max}$ for the ``new'' scheme are taken from Table \ref{tab1}, while those of the ``old'' scheme are given in Table \ref{tabold}.
%In Table \ref{tabold}, we show $\beta_{old,k,max}$ that we can choose. 
In Figure \ref{fig1D}, we plot the CPU cost versus $L_{\infty}$ errors at time $T=1$, and provide such a comparison for $k=1, 2, 3$.  $CFL=1$ is used for all schemes.
%}
It is obvious that to achieve the same error, the new scheme always cost less CPU time, which indicates the efficient of our new method. 
This is caused by the larger $\beta$ used in the proposed scheme.

\begin{table}[htb]
	\caption{\label{tabold} $\beta_{old,k,max}$ in \eqref{equxxold} for $k=1,2,3$}
	\bigskip
	\centering
	\begin{tabular}{|c|c|c|c|}\hline
	$k$	&	1	&	2	&	3	\\\hline
	$\beta_{old,k,max}$	&	2	&	1	&	0.8375	\\\hline
	\end{tabular}
\end{table}

\begin{figure}[htbp]
\centering
	\subfigure[$k=1$]{\includegraphics[width=.32\textwidth]{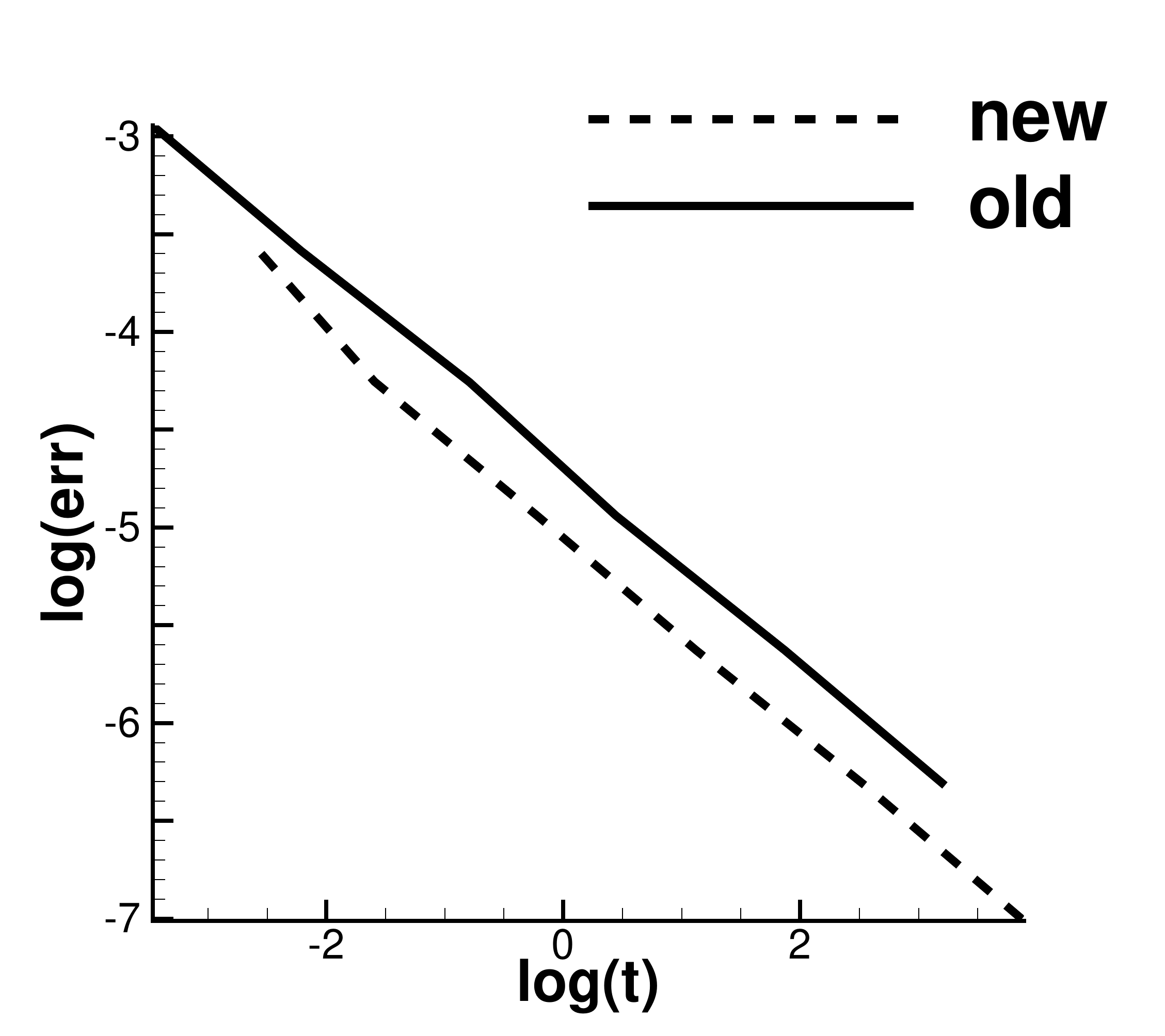}}
	\subfigure[$k=2$]{\includegraphics[width=.32\textwidth]{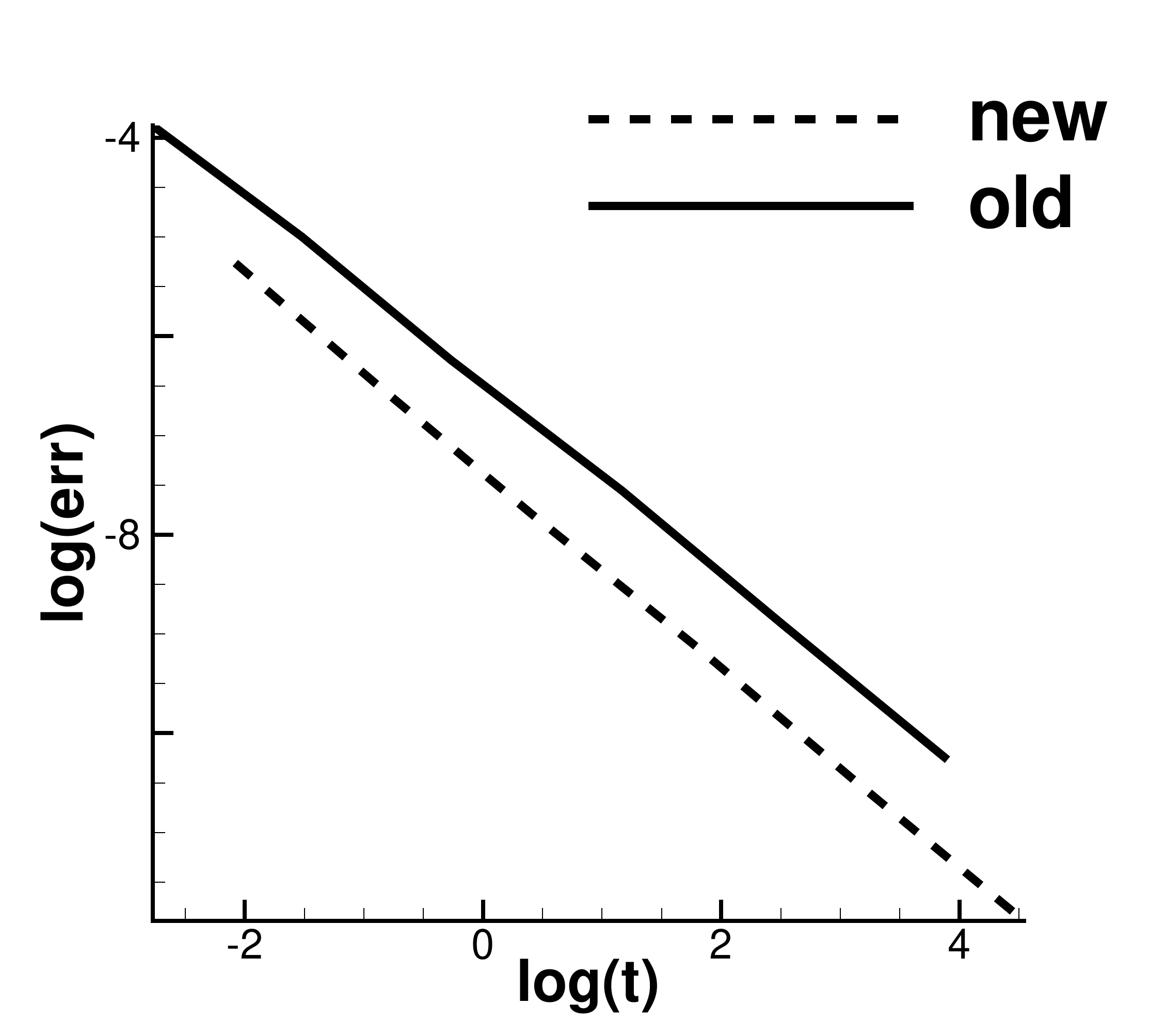}}
	\subfigure[$k=3$]{\includegraphics[width=.32\textwidth]{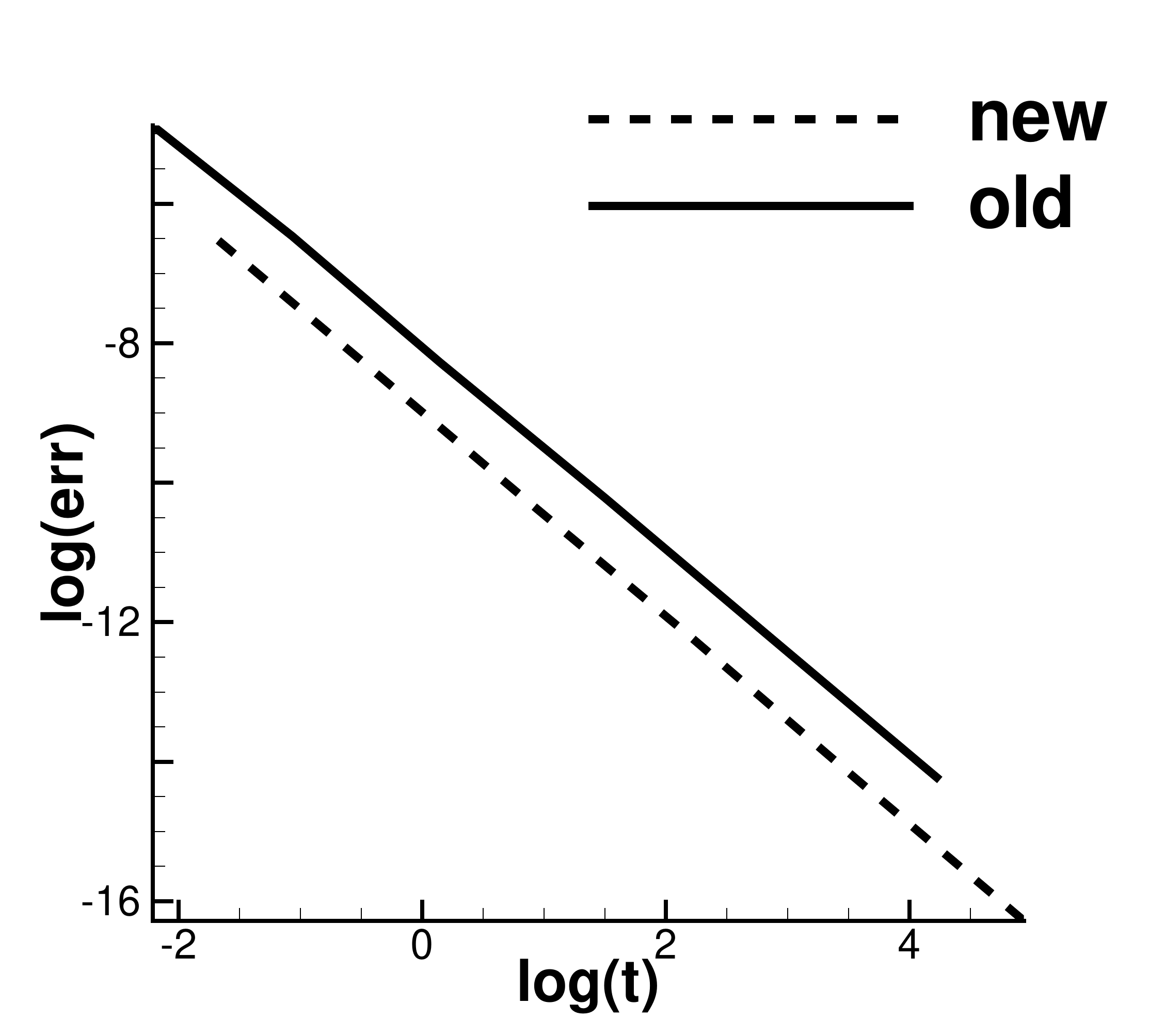}}
	\caption{Example \ref{ex1}: Comparison of CPU time against $L_\infty$ error for one-dimensional heat equation between our scheme and the scheme in \cite{christlieb2017kernel}.}
	\label{fig1D}
\end{figure}

\end{exa}

%\textbf{Example 2.}  \label{ex2}
\begin{exa}\label{ex2}
Then we consider the parabolic equation with the $2\pi$-periodic boundary condition,
\begin{align}
\left\{\begin{array}{l}
u_t=(uu_x)_x-u_x+u-1-0.25\cos(2x-2t),\quad 0\leq x\leq 2\pi\\
u(x,0)=1+0.5\sin x.\\
\end{array}\right.
\end{align}
And the exact solution is $u(x,t)=1+0.5\sin(x-t)$. In Table \ref{tabexm2}, we list the errors of schemes and the associated orders of accuracy at $T=1$, with $k=1, 2, 3$. Three CFLs including 0.5, 1 and 2 are used to demonstrate the performance. It is observed that the scheme can achieve the designed order. In particular, the scheme allows for large CFL numbers due to its unconditionally stability.

\begin{table}[htbp]
\caption{Example 2: $L_\infty$ errors and orders of accuracy at $T=1$}
\centering
\bigskip
\label{tabexm2}
\begin{tabular}{|c|c|cc|cc|cc|}
\hline
\multirow{2}*{CFL}&\multirow{2}*{$N_x$}&\multicolumn{2}{c|}{$k=1$}&\multicolumn{2}{c|}{$k=2$}&\multicolumn{2}{c|}{$k=3$}\\
\cline{3-8}
~&~&error&order&error&order&error&order\\\hline
\multirow{6}*{0.5}&20 &
  0.76E-01&      -&  0.40E-01&     -&  0.77E-02&      -\\\cline{2-8}
~&                    40 &
  0.38E-01&      0.99&  0.13E-01&   1.63&  0.21E-02&      1.90\\\cline{2-8}
~&                    80 &
  0.19E-01&      1.00&  0.37E-02&      1.80&  0.43E-03&      2.29\\\cline{2-8}
~&                   160 &
  0.95E-02&      1.00&  0.10E-02&      1.84&  0.72E-04&      2.56\\\cline{2-8}
~&                   320 &
  0.47E-02&      1.00&  0.27E-03&      1.93&  0.10E-04&      2.79\\\cline{2-8}
~&                   640 &
  0.24E-02&      1.00&  0.70E-04&      1.96&  0.14E-05&      2.89\\\hline
\multirow{6}*{1}&20&
0.14E+00&-&  0.95E-01&-&  0.30E-01& -\\\cline{2-8}
~&40&
0.76E-01&      	0.88&  0.40E-01&      1.25&  0.77E-02&      1.98\\\cline{2-8}
~&80&
0.38E-01&      0.98&  0.13E-01&      1.63&  0.21E-02&      1.89\\\cline{2-8}
~&160&
0.19E-01&      1.01&  0.37E-02&      1.80&  0.43E-03&      2.29\\\cline{2-8}
~&320&
0.95E-02&      1.00&  0.10E-02&      1.84&  0.72E-04&      2.56\\\cline{2-8}
~&640&
0.47E-02&      1.00&  0.27E-03&      1.93&  0.10E-04&      2.79\\\hline
%\multirow{6}*{2}&20&
%0.25E+00&      -&  0.13E+00&     -&  0.72E-01&    -\\\cline{2-8}
\multirow{5}*{2}&40&
 0.14E+00&      - &  0.97E-01&      - &  0.30E-01&     - \\\cline{2-8}
~&80&
0.76E-01&      0.88&  0.40E-01&      1.28&  0.78E-02&      1.97\\\cline{2-8}
~&160&
0.38E-01&      0.98&  0.13E-01&      1.63&  0.21E-02&      1.89\\\cline{2-8}
~&320&
 0.19E-01&      1.01&  0.37E-02&      1.80&  0.43E-03&      2.30\\\cline{2-8}
~&640&
0.95E-02&      1.00&  0.10E-02&      1.84&  0.72E-04&      2.56\\\hline
\end{tabular}
\end{table}

\end{exa}

\begin{comment}
\vspace{0.5cm}
%\textbf{Example 3. \label{ex3}}
\begin{exa}\label{ex3}
We now compare the efficiency of old and new schemes for two-dimensional heat equation 
\begin{align}
	\left\{ \begin{array}{ll}  
	u_t= u_{xx}+u_{yy}\quad (x,y)\in[0,2\pi]^2 ,\\
	u(x,y,0) = \sin(x+y) \\
	\end{array} \right.
\end{align}
with $2\pi$-periodic boundary conditions in each direction and the exact solution is $u(x,y,t)=e^{-t}\sin (x+y)$. Plots of the CPU cost versus errors are given in Figure \ref{fig2D}. We can see that the new scheme's efficiency is lower than the old one for 2D problem when the order is $k=1$ but higher when $k=2, 3$. {\color{red} why?} 
\end{exa}
\end{comment}

%\vspace{0.5cm}
%\textbf{Example 4.}
\begin{exa}\label{ex4}
We use the following 2D nonlinear parabolic equation on $(x,y)\in[0,2\pi]^2$
\begin{align*}
	\left\{ \begin{array}{ll}
	u_t=(uu_x)_x+(uu_y)_y-uu_x-uu_y-u^2+f(x,y,t),  \\
	u(x,y,0)=1+0.5\sin(x+y), \\
	\end{array} \right.
\end{align*}
with $$f(x,y,t)=1.125-0.625\cos(2x+2y-2t)+0.25\sin(2x+2y-2t)+0.5\cos(x+y-t)+2\sin(x+y-t).$$
The $2\pi$-periodic boundary is considered in each direction. It is easy to verify that $u(x,y,t)=1+0.5\sin(x+y-t)$ is the exact solution. We show the $L_{\infty}$ errors and the orders of accuracy with $k=3$ in Table \ref{tabexm4}. Again, the scheme can achieve the designed order of accuracy, even with pretty large time step.
\end{exa}

\begin{table}[htb]
\caption{Example \ref{ex4}: $L_\infty$ errors and orders of $k=3$ at $T=1$}
\centering
\bigskip
\label{tabexm4}
\begin{tabular}{|c|cc|cc|cc|}
\hline
\multirow{2}*{$N_x \& N_y$} & \multicolumn{2}{c|}{$CFL=0.5$} & \multicolumn{2}{c|}{$CFL=1$} & \multicolumn{2}{c|}{$CFL=2$}\\
\cline{2-7}
~&error&order&error&order&error&order\\\hline
80 &
0.59E-02&      -&0.47E-01&      -&0.11E+00&      -\\\hline
160 &
0.12E-03&      5.64&0.80E-03&      5.87&0.46E-01&      1.24\\\hline
320 &
0.17E-04&      2.83&0.12E-03&      2.77&0.80E-03&      5.84\\\hline
640 &
0.22E-05&      2.88&0.17E-04&      2.83&0.12E-03&      2.77\\\hline
1280 &
0.30E-06&      2.92&0.22E-05&      2.88&0.17E-04&      2.83\\\hline
\end{tabular}
\end{table}

\vspace{0.5cm}
%\textbf{Example 5(Schnakenberg model).}
\begin{exa}\label{ex5}
	\textbf{(Schnakenberg model)}
In this example, we want to show that the scheme can also be used to solve system. The Schnakenberg system \cite{Andrew2015high} has been used to model the spatial distribution of a morphogen, which has the following form
\begin{align}
\left\{\begin{array}{l}
\displaystyle\frac{\partial C_a}{\partial t}=D_1\nabla^2 C_a+\kappa(a-C_a+C_a^2 C_i),\\
\displaystyle\frac{\partial C_i}{\partial t}=D_2\nabla^2 C_i+\kappa(b-C_a^2 C_i).
\end{array}\right.
\end{align}
Here, $C_a$ and $C_i$ represent the concentrations of activator and inhibitor, with $D_1$ and $D_2$ as the diffusion coefficients
respectively. $\kappa$, $a$ and $b$ are rate constants of biochemical reactions. Following the setup in \cite{Andrew2015high}, we take the initial conditions as
\begin{align*}
C_a(x,y,0)&=a+b+10^{-3}e^{-100((x-\frac{1}{3})^2+(y-\frac{1}{2})^2)},\\
C_i(x,y,0)&=\frac{b}{(a+b)^2}.
\end{align*}
And the parameters are 
$$\kappa=100,\quad  a = 0.1305, \quad b = 0.7695, \quad D_1 = 0.05, \quad \text{and} \quad D_2 = 1.$$
We test the problem with $k=3$ and $300\times 300$ grid points. $CFL=1$ is taken here.
$C_a$ at different times are showed in Figure \ref{figsys}, which have the similar patterns as those in \cite{Andrew2015high}.

\begin{figure}[htb]
\centering
	\subfigure[$T=0.5$]{\includegraphics[width=.32\textwidth]{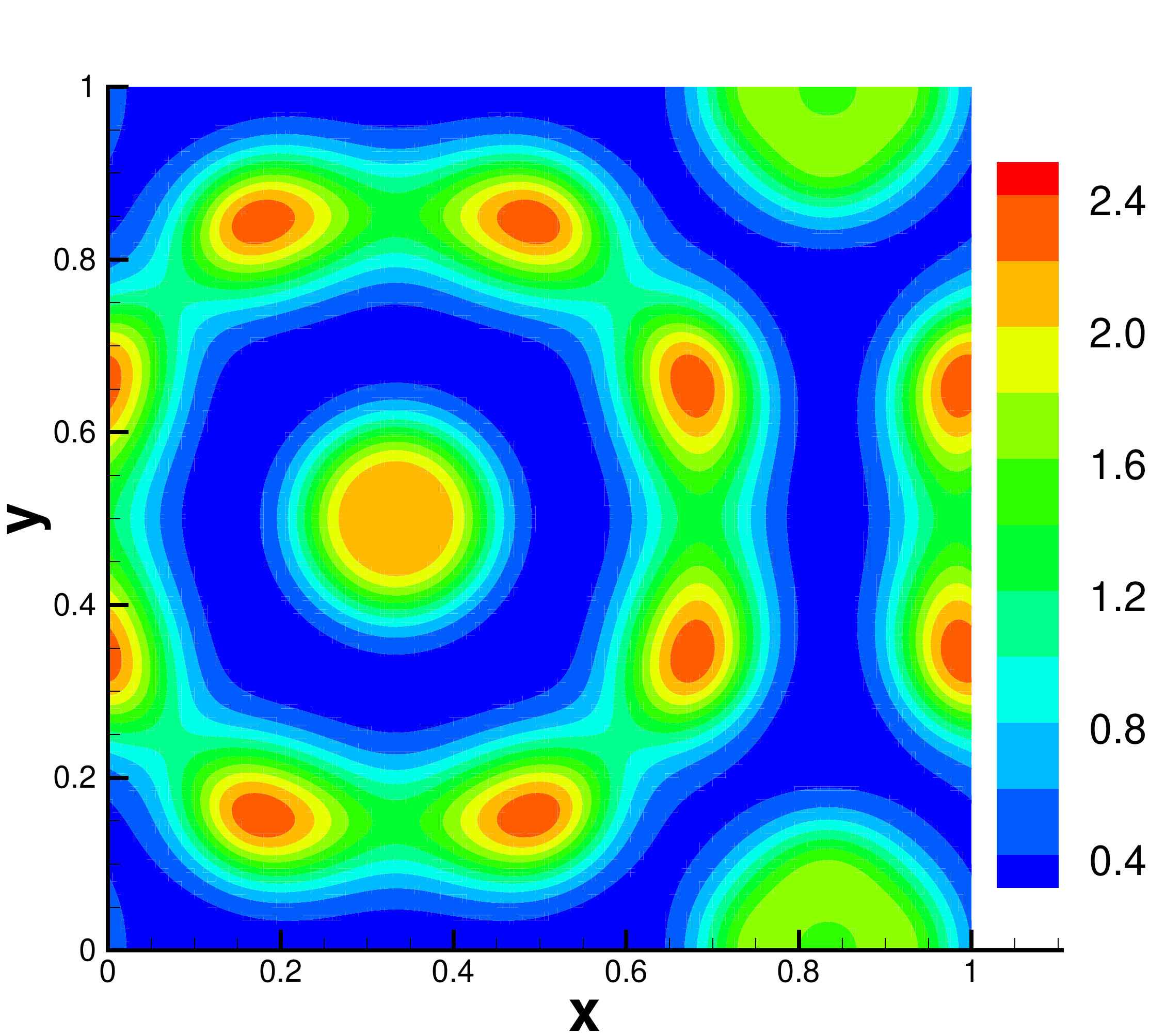}}
	\subfigure[$T=1$]{\includegraphics[width=.32\textwidth]{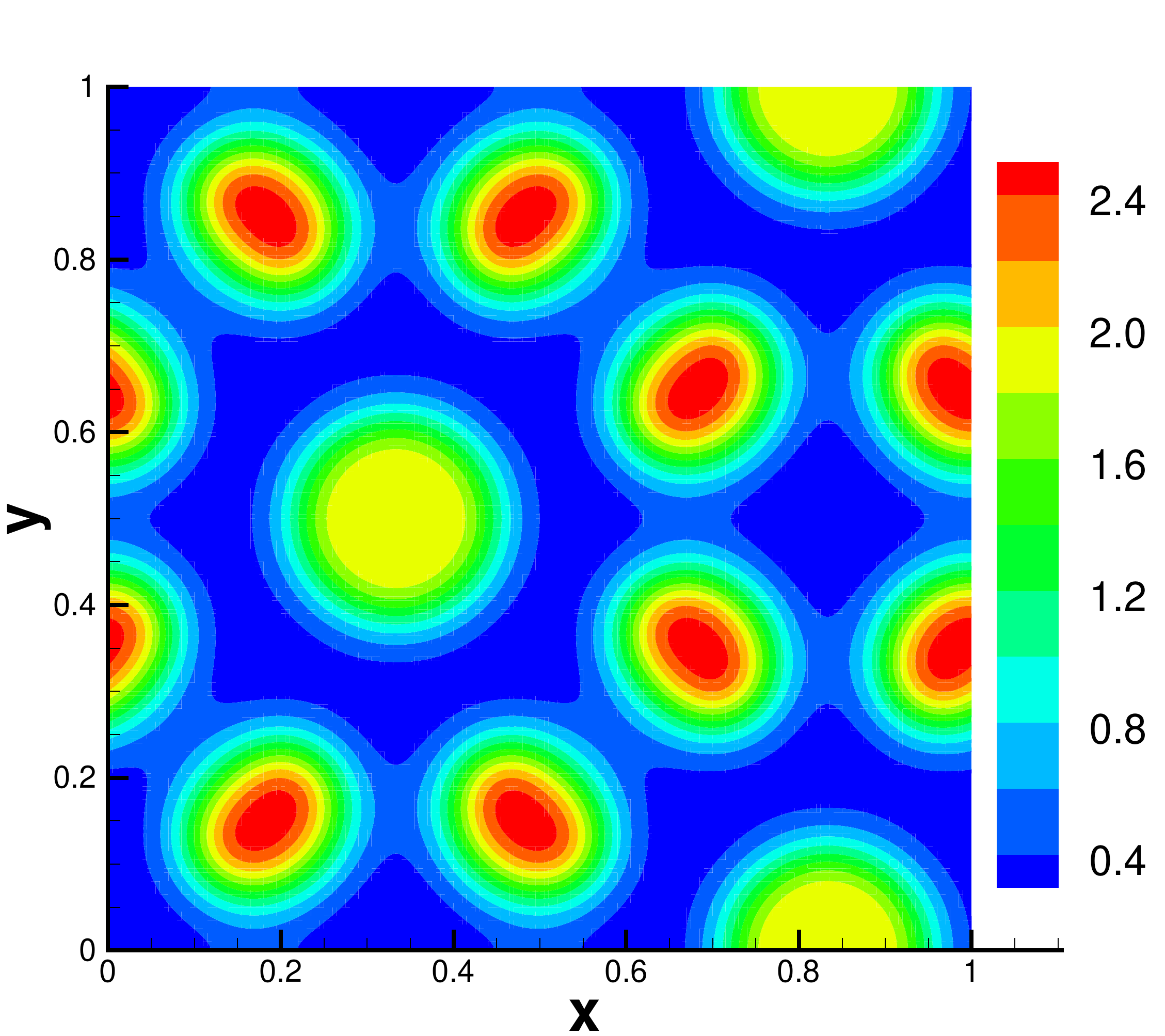}}
	\subfigure[$T=1.5$]{\includegraphics[width=.32\textwidth]{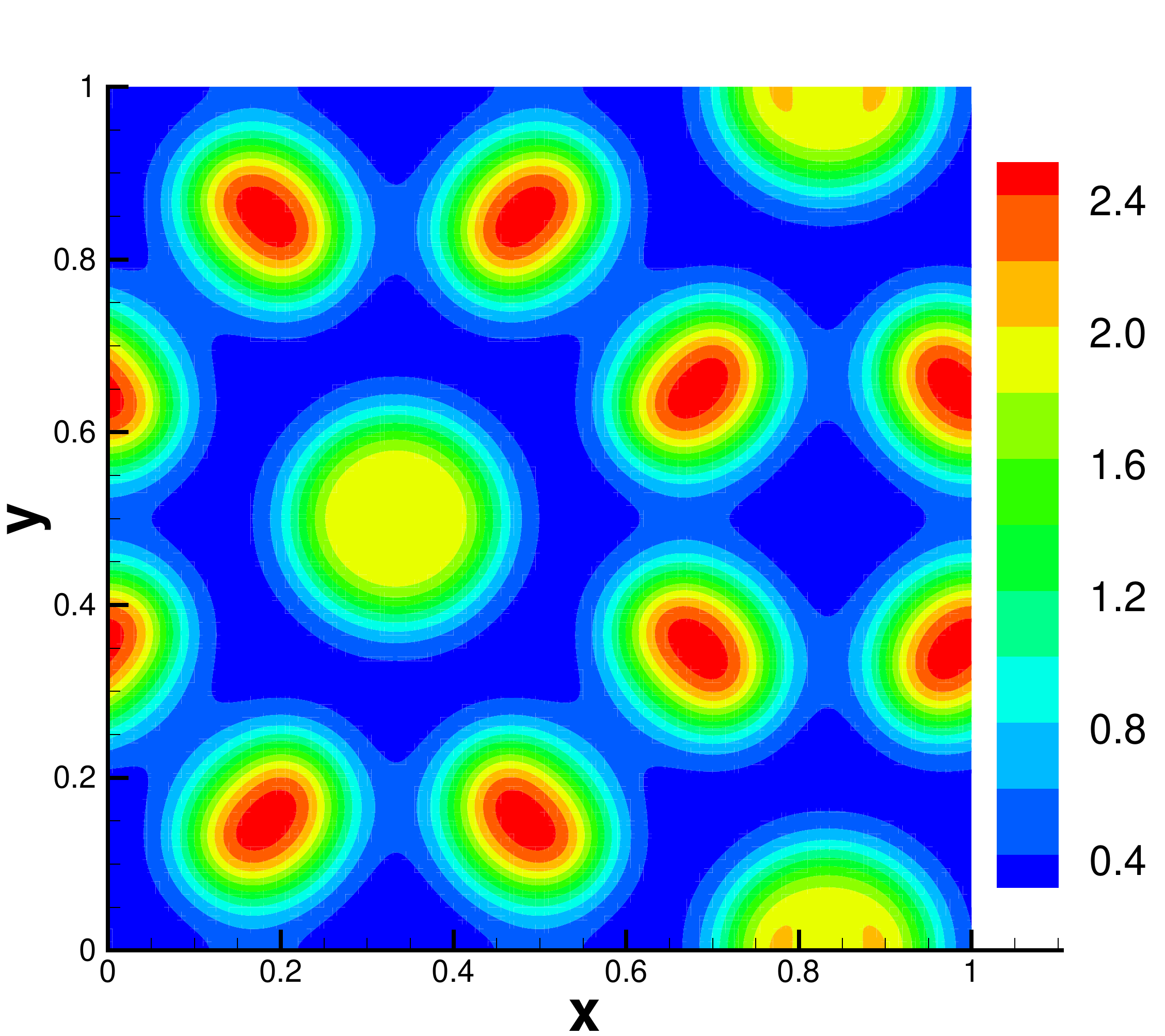}}
	\caption{Example \ref{ex5}: $C_a$ figure at different time.}
	\label{figsys}
\end{figure}
\end{exa}

%\textbf{Example 6}
\begin{exa} \label{ex6}
Finally, we show the result for equation with cross derivative terms
\begin{align*}
  \left\{\begin{array}{ll} 
  u_t=u_{xx}+u_{yy}+u_{xy}, \quad (x,y) \in(0,2\pi)^2, \\
  u(x,y,0)=\sin (x+y), \\
  \end{array} \right.
\end{align*}
and exact solution is $u(x,y,t)=e^{-3t}\sin (x+y)$. $L_{\infty}$ errors and orders of accuracy with $k=3$ are showed in Table \ref{tabexm6}, indicating the high order of accuracy and the unconditionally stable property of our scheme.

\begin{table}[htb]
\caption{Example \ref{ex6}: $L_\infty$ errors and orders at $T=1$, with $k=3$ and $\beta=0.49$.}
\centering
\bigskip
\label{tabexm6}
\begin{tabular}{|c|cc|cc|cc|}
\hline
\multirow{2}*{$N_x\&N_y$}&\multicolumn{2}{c|}{$CFL=0.5$}&\multicolumn{2}{c|}{$CFL=1$}&\multicolumn{2}{c|}{$CFL=2$}\\
\cline{2-7}
~&error&order&error&order&error&order\\\hline
40 &
0.67E-03&      -&0.29E-02&      -&0.73E-02&      -\\\hline
80 &
0.11E-03&      2.65&0.67E-03&      2.11&0.29E-02&      1.34\\\hline
160 &
0.16E-04&      2.76&0.11E-03&      2.65&0.67E-03&      2.11\\\hline
320 &
0.21E-05&      2.91&0.16E-04&      2.76&0.11E-03&      2.65\\\hline
640 &
0.27E-06&      2.95&0.21E-05&      2.91&0.16E-04&      2.76\\\hline
1280 &
0.34E-07&      2.98&0.27E-06&      2.95&0.21E-05&      2.91\\\hline
\end{tabular}
\end{table}
\end{exa}

%Here we only give the results of 3rd order scheme in Table\ref{tabexm6} which $\beta$ is taken as $0.49$.
%
%\vspace{0.5cm}
\section{Conclusion}

In this paper, we proposed a novel numerical scheme to solve the nonlinear parabolic equations with variable coefficients. 
The development of the schemes was based on our previous work \cite{christlieb2017kernel}, in which the spatial derivatives of a function were represented as a special kernel-based formulation. 
Here, we designed a new kernel-based approach of the spatial derivatives, which can maintain the good properties of the original one, such as the high order accuracy and unconditionally stable for one-dimensional problems when coupling with the high order explicit strong-stability-preserving Runge-Kutta method in time. Hence, it allowed much larger time step evolution compared with other explicit schemes.
In additional, without extra computational cost comparing with the old methods, the available interval of the special parameter $\beta$ in the formula is much larger, resulting in less errors and higher efficiency. 
Moreover, theoretical investigations indicated that the proposed scheme is unconditionally stable for multi-dimensional problems, which cannot be established with the previous methods.
A collection of numerical tests verified the performance of the proposed
scheme, demonstrating both its designed high order accuracy and efficiency.
In the future, we plan to extend our schemes to solve the time-dependent problems with general boundary conditions. And other time discretization would be considered as well. 

\appendix
\section{Coefficients in quadrature} \label{appen1}

Here, we list the coefficients in quadrature with fifth order accuracy \eqref{eq:quar}. Denote $\nu=\alpha\Delta x$. Then, we have
\begin{align*}
	C_{j-3}&=-\frac{60-15\nu^2+2\nu^4-(60+60\nu+15\nu^2-5\nu^3-3\nu^4)e^{-\nu}}{60\nu^5}, \\
	C_{j-2}&=\frac{120+24\nu-42\nu^2-2\nu^3+6\nu^4-(120+144\nu+42\nu^2-12\nu^3-8\nu^4)e^{-\nu}}{24\nu^5}, \\
	C_{j-1}&=-\frac{120+48\nu-42\nu^2-16\nu^3+12\nu^4-(120+168\nu+66\nu^2-14\nu^3-12\nu^4) e^{-\nu}}{12\nu^5}, \\
	C_{j}&=\frac{120\nu+132\nu^2+26\nu^3}{12\nu^5}, \\
	C_{j+1}&=-\frac{120+96\nu+6\nu^2-32\nu^3-12\nu^4-(120+216\nu+150\nu^2+30\nu^3-26\nu^4+24\nu^5)e^{-\nu}}{24\nu^5}, \\
	C_{j+2}&=\frac{60+60\nu+15\nu^2-5\nu^3-3\nu^4-(60+120\nu+105\nu^2+50\nu^3+12\nu^4)e^{-\nu}}{60\nu^5}.
\end{align*}

\bibliographystyle{abbrv}
\bibliography{ref}

\end{document}